\documentclass[myaap,preprint]{imsart}

\RequirePackage[OT1]{fontenc}
\RequirePackage{amsthm,amsmath}
\RequirePackage[numbers]{natbib}


   
\startlocaldefs
\numberwithin{equation}{section}
\theoremstyle{plain}
  
\endlocaldefs

\newcommand{\radius}{\rho} 
\newcommand{\size}{N}  
\newcommand{\msize}{N}

\newcommand{\csize}{N^3}
\newcommand{\ti}{t}

\newcommand\given[1][]{\:#1\vert\:} 

\usepackage{mathrsfs}

\newenvironment{definition*}[1][Definition]{\begin{trivlist}
\item[\hskip \labelsep {\bfseries #1}]}{\end{trivlist}}
 
\newenvironment{remark*}[1][Remark]{\begin{trivlist} 
\item[\hskip \labelsep {\bfseries #1}]}{\end{trivlist}}





\usepackage{graphicx}
\graphicspath{{figs/}}

%
\usepackage[cmex10]{amsmath}
\usepackage{amssymb}
\usepackage{bm}
\usepackage{commath}

\usepackage{amsthm}

%
\usepackage{enumitem}

%
\usepackage{array}

\hyphenation{net-works}
\hyphenation{sub-neighborhood}
\hyphenation{sub-neighborhoods}


\newtheorem{theorem*}{\textbf{Theorem}}
\newtheorem{proposition*}{\textbf{Proposition}}

\newtheorem{lemma*}{\textbf{Lemma}}
\newtheorem{conjecture*}{\textbf{Conjecture}}

\theoremstyle{definition}
\newtheorem{definition}{\textbf{Definition}}
\newtheorem{remark}{\textbf{Remark}}

\newtheorem{corollary*}{Corollary}

\bibliographystyle{imsart-nameyear}



\begin{document}

\begin{frontmatter}
{ 
\title{Evolution and Limiting Configuration of a Long-Range  Schelling-Type Spin System\thanksref{T1}}}
\runtitle{Evolution and Limiting Configuration of a  Spin System}
\thankstext{T1}{An extended abstract of an earlier version of this paper containing a subset of the results and with most proofs omitted appears in the proceedings of ACM  Symposium on Theory of Computing (STOC '18). This work was partially supported by Army Research Office (ARO), award number W911NF-15-1-0253.}  
 
\begin{aug}
\author{\fnms{Hamed} \snm{Omidvar}\ead[label=e1]{homidvar@ucsd.edu}} 
\and
\author{\fnms{Massimo} \snm{Franceschetti}\ead[label=e2]{mfrances@ucsd.edu}} 

\runauthor{H. Omidvar and M. Franceschetti.}

\affiliation{University of California, San Diego} 

\address{
University of California, San Diego\\  
Electrical and Computer Engineering Department, \\
9500 Gilman Drive, La Jolla, CA 92093 \\
\printead{e1}\\
\phantom{E-mail:\ }\printead*{e2}}
\end{aug}

\begin{abstract}
We consider a long-range interacting particle system in which binary particles -- whose initial states are chosen uniformly at random -- are located at the nodes of a flat torus $(\mathbb{Z}/h\mathbb{Z})^2$. Each node of the torus is connected to all the nodes located in an $l_\infty$-ball of radius $w$ in the toroidal space centered at itself and we assume that $h$ is  exponentially larger than $w^2$. Based on the states of the neighboring particles and on the value of a common intolerance threshold $\tau$, every particle is labeled ``stable," or ``unstable.'' Every unstable particle that can become stable by flipping its state  is labeled ``p-stable." Finally, unstable particles that  remained p-stable for  a random, independent and identically distributed waiting time, flip their state and become stable. When the waiting times have an exponential distribution and $\tau \le 1/2$, this model is equivalent to a Schelling model of self-organized segregation in an open system, a zero-temperature Ising model with Glauber dynamics, or an Asynchronous Cellular Automaton (ACA) with extended Moore neighborhoods. 
We first prove a shape theorem for the spreading of the ``affected'' nodes of a given state -- namely nodes on which a particle of a given state would be p-stable. As $w \rightarrow \infty$,  this spreading starts with high probability (w.h.p.)  from any $l_\infty$-ball   in the torus having radius $w/2$ and containing only affected nodes, and continues for a time that is at least  exponential in the cardinalilty of the neighborhood of interaction $N = (2w+1)^2$. Second, we show that when the process reaches a limiting  configuration and no more state changes occur, \  for \  all  ${\tau \in (\tau^*,1-\tau^*) \setminus \{1/2\}}$ where ${\tau^* \approx 0.488}$, w.h.p. any particle is contained in a large ``monochromatic ball'' of cardinality exponential in $N$. When particles are placed on the infinite lattice $\mathbb{Z}^2$ rather than on a flat torus, for the values of $\tau$ mentioned above, after  a sufficiently long evolution time, w.h.p. any particle is contained  in a large monochromatic ball of cardinality exponential in~$N$.
\end{abstract}  
\begin{keyword}[class=MSC]
\kwd[Primary ]{60K35} 
\kwd{82C22}
\kwd{82C43}
\kwd{82B43}
\end{keyword}

\begin{keyword}
\kwd{Interacting Particle System (IPS)}
\kwd{Agent-Based Model}
\kwd{Asynchronous Cellular Automaton}
\kwd{Zero-Temperature Ising Model}
\kwd{Unperturbed Schelling Segregation}
\kwd{Distributed Algorithm}
\kwd{Self-Organized Segregation}
\kwd{Percolation Theory}
\kwd{First Passage Percolation}
\kwd{Exponential Segregation}
\end{keyword}
\end{frontmatter} 

\newpage
\section{Introduction}
\subsection{Background} 
Consider a flat torus $(\mathbb{Z}/h\mathbb{Z})^2$. Connect 
 each node of the torus   to all the nodes located in an $l_\infty$-ball of radius $w$ in the toroidal space centered at itself, and assume that   $h$  is  exponentially larger than $w^2$.
 This gives a   Cayley graph $G_w = C((\mathbb{Z}/h\mathbb{Z})^2,\{-w,  -w+1,\ldots,w\}^2\setminus \{(0,0)\})$. Put a particle at each node of the graph such that its initial binary state is an element of the set $\{\theta, \bar{\theta}\}$  that is chosen independently and uniformly at random. 
The ``neighborhood" of a node, or particle,  is defined as the set  containing the node itself and all of its adjacent nodes in $G_w$. 
All particles have a common intolerance threshold $0< \tau < 1$, indicating the minimum fraction of particles in their same state that must be in their neighborhood to label them ``stable."  A particle that is not stable is  labeled ``unstable."
Furthermore, a particle that is not stable, but can become stable by flipping its state, is labeled potentially stable, or ``p-stable.'' Every time a particle becomes p-stable,  it is assigned an independent and identical clock.
 When the clock rings, the particle's state is flipped if the particle has remained p-stable for the entire clock duration. This flipping action indicates that the particle has moved out of the system and a new particle has occupied its location. This change is then immediately detected by the neighbors   who update their labels accordingly. We are interested in the limiting configuration of this process  when both $h$ and $w$ tend to infinity, and $h$ is exponentially larger than $w^2$. Namely,  we consider the behavior of the system when  $\log h=\omega(w^2)$ as  $w \rightarrow \infty$.  This is of course equivalent to looking at the behavior   for $h \rightarrow \infty$ when $w^2 = o(\log h)$, while also requiring $w \rightarrow \infty$.  This choice ensures that   the rate of growth of the number of particles inside any neighborhood is exponentially smaller than the rate of growth of the number of particles over the entire torus.
As a consequence,  interactions  occur over an unbounded range, but are also sufficiently ``local''   to ensure that a limiting configuration can be studied.


In social sciences and economics, the model we have described has been extensively studied using Poisson clocks and is known  as the Schelling model in an ``open" system \cite{schelling1969models,schelling1971dynamic}. In computation theory, mathematics, physics, complexity theory, theoretical biology and material sciences, it is known as a two-dimensional, two-state Asynchronous Cellular Automaton (ACA) with extended Moore neighborhoods and exponential waiting times \cite{chopard1998cellular}. Related models appeared in epidemiology \cite{hethcote2000mathematics, draief2010epidemics}, economics \cite{jackson2002formation}, engineering and computer sciences \cite{kleinberg2007cascading, easley2010networks}.  
Mathematically, all  of them fall in the general area of interacting particle systems~\cite{liggett2012interacting,liggett2013stochastic}. 
For an intolerance value of $1/2$, the model corresponds to the Ising model with zero temperature, which exhibits spontaneous magnetization as spins align along the direction of the local field~\cite{stauffer2007ising, castellano2009}.

The dynamics of these processes can be roughly divided into two classes. \textit{Glauber dynamics} assume unstable particles to simply flip their state if this makes them stable.  In contrast, \textit{Kawasaki dynamics} assume that pairs of unstable particles swap their locations if this will make both of them stable.  While Glauber dynamics correspond to an ``open'' system where  the number of particles of each state can change over time, the Kawasaki dynamics correspond to a ``closed" system where the number of particles of each state is fixed. 
In this paper, we consider    Glauber dynamics.
Other variants are possible, 
including  having unstable particles swap (or flip) regardless of whether this makes them stable or not, or  to assume that particles have a small probability of acting differently than what the general rule prescribes, 
have multiple intolerance levels,  multiple states, different distributions, and time-varying intolerance  \cite{young2001individual, zhang2004dynamic, zhang2004residential, zhang2011tipping, mobius2000formation, meyer2003immigration,bhakta2014clustering, schulze2005potts,barmpalias2018minority,barmpalias2015tipping}.

\begin{figure}
\begin{center}
{\includegraphics[width=\textwidth]{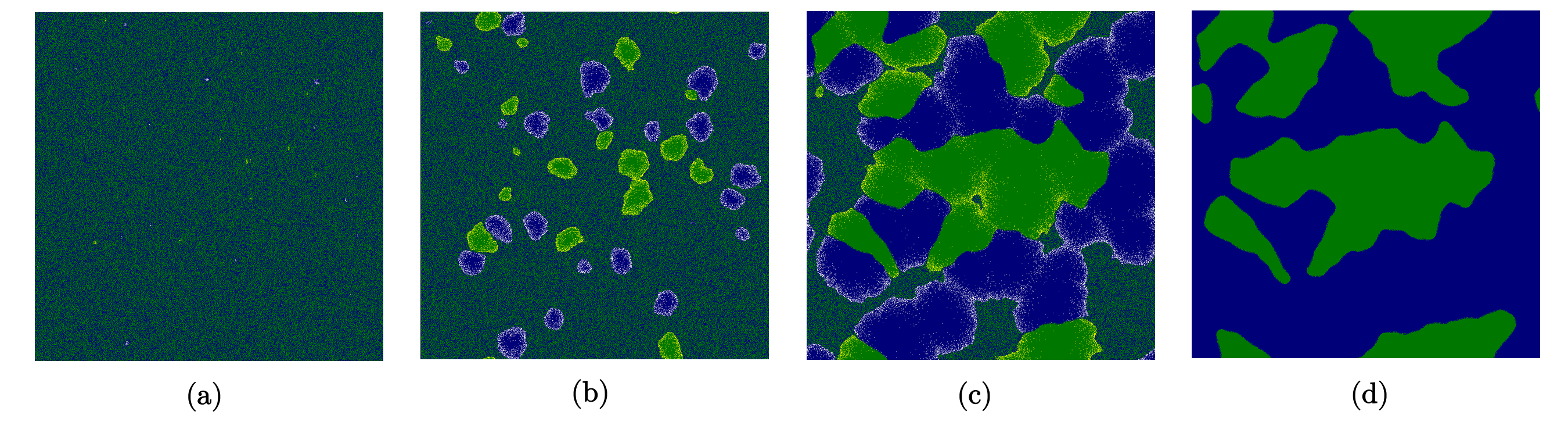}}
\end{center}
\caption{{\footnotesize Self-organization arising over time for a value of the intolerance $\tau=0.42$ on a $1000 \times 1000$ flat torus and neighborhood size $441$. Green and blue indicate areas of ``stable'' particles in states ${\theta}$ and $\bar{\theta}$, respectively. White and yellow indicate areas of ``unstable" particles in states ${\theta}$ and $\bar{\theta}$ respectively.  Initial configuration (a),   transient configurations (b)-(c),  final configuration (d). When the process terminates all particles are stable and large monochromatic areas can be observed. }}
\label{Fig:Sim_results_1}
\end{figure}

A common effect observed by simulating several variants of the model is that 
in the limiting configuration large monochromatic areas containing  particles with the same state are formed, for a wide range of the intolerance threshold $\tau$. This corresponds to observing spontaneous self-organization resulting from local interactions. See Figure~\ref{Fig:Sim_results_1} for a simulation of this behavior.

\subsection{Prior work}

Although simulation results have been available for a long time, rigorous results for the limiting  behavior of the model appeared only recently, even for  the one-dimensional case and assuming Poisson clocks. 
Brandt et al.~\cite{brandt2012analysis}  considered  a  ring graph for the Kawasaki model of evolution.  In this setting, 
they  showed that for an intolerance level $\tau=1/2$, the expected size of the largest monochromatic ball containing an arbitrary particle in the final configuration is polynomial   in the size of the neighborhood. 
Barmpalias et al.~\cite{barmpalias2018digital} showed
that there exists a value of $\tau^*\approx 0.35$, such that for all  $\tau<\tau^*$ the initial configuration remains almost static with high probability (w.h.p.), while for all $\tau^*<\tau<1/2$  the size of the largest monochromatic ball in the final configuration is exponential   in the size of the neighborhood w.h.p. On the other hand, for all $\tau>1/2$  the system evolves w.h.p. towards a  state with only two monochromatic components. For the Glauber model the behavior is similar and consists of a transition from an   almost static configuration to a configuration with exponential monochromatic balls occurring at $\tau \approx 0.35$, a special point $\tau=1/2$ with monochromatic balls of polynomial expected size,  then again exponential monochromatic balls  until $\tau \approx 0.65$, and finally an   almost static configuration for larger values of $\tau$. 
Holden and Sheffield  \cite{holden2020scaling} have considered the case $\tau=1/2$ and studied the dynamical scaling limit as the size of the neighborhood tends to infinity and the lattice is correspondingly re-scaled.


 
In the two-dimensional model, the case $\tau=1/2$ is open. Immorlica et al.~\cite{immorlica2015exponential}  have shown for the Glauber dynamics  the existence of a value $\tau^*< 1/2$, such that for all $\tau^*<\tau<1/2$  the expected size of the largest monochromatic ball is exponential in the size of the neighborhood.  This shows that exponential monochromatic balls are expected  in the small interval $\tau \in (1/2-\epsilon, 1/2)$.
Barmpalias et al.~\cite{barmpalias2016unperturbed} considered a model in which particles in different states have different intolerance parameters, i.e.,  $\tau_1$ and $\tau_2$.  For the special case of $\tau_1 = \tau_2 = \tau$, they have shown that when  $\tau>3/4$, or  $\tau<1/4$,  the initial configuration remains almost static  w.h.p.
 
In a previous work by the authors \cite{omidvar2017self2}, the intolerance interval that leads to the formation of large monochromatic balls has been enlarged from $\epsilon>0$ to $\approx 0.134$, namely  when $0.433 < \tau < 1/2$ (and for $1/2<\tau<0.567$), the expected size of the  largest monochromatic ball   is  exponential  in the size of the neighborhood of interaction. 
In addition, 
``almost monochromatic  balls'' have been considered, namely balls where the ratio of the  number of particles in one state and the  number of particles in the other state quickly vanishes as the size of the neighborhood grows, and it has been shown that for $0.344 < \tau \leq 0.433$ (and for $0.567 \leq \tau<0.656$) the expected size of the largest almost monochromatic ball  is exponential  in the size of the neighborhood. 


\subsection{Contribution} 
The first contribution of this paper is the development of a shape theorem for the spread of ``affected" nodes of a given state $\theta$ -- namely nodes on which $\theta$-particles would be p-stable -- during the process dynamics.
Letting $w$ be the $l_\infty$ radius of the neighborhood of interaction on the torus, and $N=(2w+1)^2$ its cardinality, we show that conditional  on the existence of an $l_{\infty}$ ball   in the torus having radius $w/2$ centered at the origin and containing only $\theta$-affected nodes, 
the spreading  of these affected nodes starts with high probability from  such a ball, and continues for a time that is at least exponential in $N$. 
This is the first result that precisely describes the transient dynamics of the spreading process. Key to this result is that we consider the spreading of  $\theta$-affected nodes rather than the spreading of unstable particles. These nodes can have a particle located on them being p-stable. Thus, they have the potential of hosting a particle  that is unstable and can become stable by flipping its state. While unstable particles may keep switching between being stable and unstable during the process dynamics,  the behavior of $\theta$-affected nodes  is somewhat more static;   once they become affected they remain so for a long time interval or indefinitely. Another key property  is  that in the initial configuration   $\theta$-affected nodes  are rare: they   do not occur with high probability in an exponentially large region around the origin. It follows that all the $\theta$-affected nodes that are found within this region after an exponentially long evolution time  must have  spread  from the original affected region of radius $w/2$ centered at the origin.


 Our second contribution is determining the limiting size  of the largest monochromatic ball, for a given interval of $\tau$. A weakness of all previous results for the two dimensional case is that they obtain lower bounds on the expected size  of the largest monochromatic ball  containing a given particle, but they do not show that in the final configuration any particle ends up  in an exponentially large monochromatic ball with high probability. A possibility that is consistent with the results in the literature (but inconsistent with the simulation results) is that only an exponentially small fraction of the nodes are contained in a large monochromatic ball  at the end of the process, but that those neighborhoods are so large that the expected size  of largest monochromatic ball  containing any node is exponentially large. For this reason, current results leave a large gap in our qualitative understanding of the two-dimensional process. We show that when 
the process stops, \  for \  all  ${\tau \in (\tau^*,1-\tau^*) \setminus \{1/2\}}$ where ${\tau^* \approx 0.488}$,  w.h.p. any particle is  contained in a large monochromatic ball of size exponential in  $N = (2w+1)^2$.
When particles are placed on the infinite lattice $\mathbb{Z}^2$ rather than on a flat torus, for the values of $\tau$ mentioned above, after a sufficiently long  evolution time,  w.h.p. any particle is  contained in a large monochromatic ball of size exponential in $\size$. These results are summarized in Figure~\ref{fig:contribution}.



\begin{figure}[!t] 
\centering
\includegraphics[width=3in]{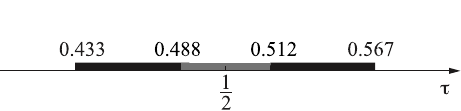}
\caption{ For  ${\tau \in (\tau_*,1-\tau_*) \setminus \{1/2\}}$, where ${\tau_* \approx 0.433}$, we prove a shape theorem for the spread of the ``affected" nodes during the process dynamics  (gray plus black region) and then show that in the final configuration, \  for \  all  ${\tau \in (\tau^*,1-\tau^*) \setminus \{1/2\}}$ where ${\tau^* \approx 0.488}$, with probability tending to one as $w\rightarrow \infty$ (w.h.p.), any particle is contained in a large ``monochromatic ball'' of  size exponential in $\size=(2w+1)^2$ (gray region). } 
\label{fig:contribution}
\end{figure}

\subsection{Additional related work}
For the case of a neighborhood of interaction of constant radius $w$ over an infinite lattice, Fontes et al.~\cite{siv} have shown the existence of a critical probability  $1/2<p^*<1$ for the initial Bernoulli distribution of the particle states such that for $\tau =1/2$  and $p > p^*$ the Glauber model on the $d$-dimensional grid converges to a state where only particles in one state are present. This shows that complete monochromaticity occurs w.h.p.\ for $\tau=1/2$ and $p \in (1-\epsilon,1)$. Morris~\cite{morris2011zero} has shown that $p^*$   converges to $1/2$ as $d \rightarrow \infty$. Caputo and Martinelli \cite{caputo2006phase} have shown the same result for $d$-regular trees, while  Kanoria and Montanari~\cite{montanaritree} derived it for  $d$-regular trees in a synchronous setting where  flips occur simultaneously,  and obtained lower bounds on $p^*(d)$ for small values of $d$. The case $d = 1$   was first investigated by Erd\"{o}s and Ney~\cite{erdos1974some}, and Arratia~\cite{arratia1983site} has proven that $p^*(1)=1$.

The  rest of the paper is organized as follows. In section \ref{Sec:Model} we introduce the model, state our results, and give a summary of the proof construction.  
In section~\ref{Sec:Prelim} we provide a few preliminary results along with some results from previous works.
In section~\ref{Sec:Concentration} we develop the concentration bound for the spreading time of  the affected nodes.
In section~\ref{Sec:Shape_theorem} we use this concentration bound to obtain a shape theorem for the spread of affected nodes. 
In section~\ref{Sec:Proof} we prove a size theorem for the limiting configuration.

\section{Model and Main Results}\label{Sec:Model}

\subsection{Notational Conventions}
For any $d \geq 1$ and vector $v = (v(1),\ldots,v(d))$ we shall use the $l_\infty$ norm denoted by
\begin{align*}
\|v\|_\infty = \max_{1\le i\le d} |v(i)|.
\end{align*}
Also, we let
$\lfloor a \rfloor$ ($\lceil a \rceil$) be  the largest (smallest) integer $\le a \  (\ge a), a \wedge b = \min(a,b), a \vee b = \max(a,b)$. 

\subsection{The Model} 
\

\begin{definition}[Initial Configuration]
Consider the Cayley graph  
\begin{align*}
G_w = C(\mathbb{T}^2,\{-w,-w+1, \ldots ,w\}^2\setminus \{(0,0)\}),
\end{align*}
{ where $\mathbb{T} = \mathbb{Z}/h\mathbb{Z}$, $\log h=\omega(w^2)$, and  $w$ is assumed to be an integer. }We define the initial configuration by placing a binary particle at each node of  $G_w$  and choosing the state of each particle   independently at random to be $\theta$ or $\bar{\theta}$ according to a Bernoulli distribution of parameter $p  = 1/2$. By the state of a node we mean the state of the particle located at that node and by particle $u$ we mean the particle located at node $u$. 
\end{definition}

{ 
\begin{definition}[Neighborhood of a Node]    For any $u \in \mathbb{T}^2$, 
we  define a neighborhood of radius $\rho$, or $\rho$-neighborhood  of node $u$   as the set of all nodes in an $l_\infty$-ball of radius $\rho$ in the flat torus $\mathbb{T}^2$ centered at  $u$ as
\begin{align*}
\mathcal{N}_\rho(u) := \left\{v \in \mathbb{T}^2 :  \  \|v-u\|_\infty \le \rho  \right\}.
\end{align*}
When $\rho=w$ we drop the subscript and refer to \textit{the neighborhood of}  $u$ as  $\mathcal{N}(u)$. This also corresponds to a ball of unit radius in the graph metric centered at $u$.
The \textit{size} of a neighborhood is  defined as its cardinality.  We indicate the size of $\mathcal{N}(u)$  with $N=(2w+1)^2$.

\end{definition}

}

\begin{remark}
{ 
The choice $\log h = \omega(w^2)$ ensures that  while the size of $\mathcal{N}(u)$ is not uniformly bounded  as $w \rightarrow \infty$, it is exponentially small compared to the number of particles in the entire torus. This ensures that interactions among particles occur over  an unbounded range and  are also sufficiently ``local'' to enable us to study the limiting properties of the model.  On the other hand,  the choice of the $l_\infty$ norm is to comply with the original definition of a neighborhood in the Schelling model, but  results can be easily extended to other norms. } 
\end{remark}

\begin{definition}[Intolerance Parameter]
 We define the \emph{intolerance parameter}  as the rational number $\tau = \lceil \tilde{\tau} {\size} \rceil/{\size}$, where $\tilde{\tau} \in [0,1]$   and 
${\size} = (2w+1)^2$  is the size of the neighborhood of a node.    
\end{definition}

\begin{definition}[Stable, Unstable, and  p-Stable Particles] Let $s_\ti(u)$ denote the state of a particle at node $u$ at time $t$ and let 
\begin{align*}
r_\ti(u) = \frac{1}{\size} \sum_{v\in \mathcal{N}(u)} 1_{\{ s_\ti(u) = s_\ti(v) \}},
\end{align*}
where $1_{\{.\}}$ is the indicator function. A particle at node $u$ at time $t$ is  \textit{stable} if and only if $r_\ti(u) \ge \tau $.  A particle that is not stable is called \textit{unstable}. An unstable particle that can become stable by flipping its state is called  potentially stable, or \textit{p-stable}.
\end{definition}

\begin{definition}[Flipping Times] \label{Def:FlippingTime}
Every time a particle is labeled  p-stable, 
a new independent random variable with distribution $F$, denoted as its \textit{flipping time}, is assigned to it. We assume that $F$  satisfies the following properties: 
\begin{align} \label{eq:1.1}
F(x) = 0 \mbox{ for } x\le 0,\\
F \text{ is not concentrated on one point,} \label{eq:1.2} \\
\exists \gamma >0, \mbox{ such that } \int e^{\gamma x} F(dx) < \infty.  \label{eq:1.14}
\end{align}

The particle then waits for the amount of its last flipping time, and then flips its state, if   it has remained p-stable since the assignment of this flipping time.
\end{definition}

\begin{remark}  Two observations are  in order.  First, for $\tau< 1/2$, a particle that is unstable is also p-stable, however, this is not the case  for $\tau>1/2$. Second, when $F$ is exponential, the process dynamics are equivalent to a discrete-time model where at each discrete time step one p-stable particle is chosen uniformly at random and its state is flipped. 
\end{remark}

\begin{definition}[Affected Nodes] A node in $G_w$ is called $\theta$-\textit{affected} if a $\theta$-particle located there would be p-stable. By an affected node we mean a $\theta$-affected node.
\end{definition}

{ 

\begin{definition}[Final Configuration] We define  a \textit{final configuration} of the system as a
 configuration of   particles  where there are no p-stable particles.
 \end{definition}
 }
 
{ 
 \begin{remark}
 By defining a Lyapunov function to be  the sum over all nodes $u$ of the number of  particles in the same state as the particle at node $u$ that are present in its neighborhood, it is easy to argue that with probability one the process indeed reaches a final configuration, see \cite{barmpalias2018digital}.
\end{remark}

\begin{definition}[Monochromatic Ball]
At any point in time, the \textit{monochromatic balls} of a particle at node $u$  are the $l_\infty$-balls with largest radii that  contain only particles in a single state and  that  also  contain $u$. We choose one of these monochromatic balls arbitrarily and call it \textit{the monochromatic ball} of   node $u$.
\end{definition}
}

\begin{definition}[Size of Monochromatic Ball] 
The size of the monochromatic ball of  node $u$   at time $\ti$ is
\begin{align*}
M_\ti(u) := \sup_{\rho \in \mathbb{N}, v \in {\mathbb{T}^2}} \left| \left\{ \mathcal{N}_\rho(v) :  u\in  \mathcal{N}_\rho(v) \mbox{ and } \forall i \in  \mathcal{N}_\rho(v),\ \   s_\ti(i) = s_\ti(u)  \right\} \right|.
\end{align*}
The size of the monochromatic ball of a particle at node $u$ in the final configuration is denoted by $M(u)$. {  We also use $M_\ti$ and $M$ to denote $M_\ti(0)$ and $M(0)$ respectively.}
\end{definition}

Throughout the paper we say that an event occurs \textit{with high probability} (w.h.p.) if  its probability approaches one as $w$ tends to infinity. In all of our results,  the rate of   convergence of events that occur w.h.p.\ is always $1-o(w^{-2})$. 
We also say that an event occurs almost surely (a.s.) if the event occurs \textit{with probability equal to one} (w.p.1). 

\subsection{Main Results}  \label{Subsec:mainresults}
Our first result shows that 
conditional on having all the nodes in  a small neighborhood of the origin being affected,  this small neighborhood  w.h.p  ignites   a cascading process leading to more and more affected nodes, and this process  
creates a set of affected nodes whose shape during an exponentially large interval of time resembles a ball in a given metric. 
To state these results rigorously,  we   define the  set of affected nodes within  radius $\rho$ of the origin at time $\ti$ as follows:
\begin{definition}  
For any $\rho, \ti>0$, we define
\begin{align*}
A_{F,\rho}(0,\ti) := \left\{v \in \mathcal{N}_\rho(0) : v \mbox{ is affected at time } \ti \right\}.
\end{align*}
\end{definition} 

 We then consider the set $A_{F,
\rho}(0,\ti)$ for any 
\begin{align}
\ti & =\ti(w)  \in [2^{c_1\size} ,  2^{c_2\size}], \\
 \rho &=  \rho(w) = 2^{c'_2\size}, 
 \end{align}
 where $\size =(2w+1)^2$,  $0<c_1<c_2<c'_2<0.5(1-H(\tau'))$,    
 \begin{align}
 \tau' &= (\tau\size-2)/(\size-1),
 \end{align}
 and where
 \begin{align}
 H(\tau') &= -\tau' \log_2(\tau') - (1-\tau')\log_2(1-\tau')
 \end{align}
is the \textit{binary entropy function}. 
We also let  $\tau_* \approx 0.433$ be the solution of 
\begin{align} \label{Eq:tau_*}
\frac{3}{4} \left(1-H\left(\frac{4}{3} \tau_* \right)\right)- \left(1-H\left( \tau_* \right) \right) = 0. 
\end{align}


Given these choices,  we condition on having all the nodes in
$\mathcal{N}_{w/2}(0)$ being affected at time zero, and consider  any  $\tau \in (\tau_*,1/2)\cup (1/2,1-\tau_*)$.  The following theorem shows that in this case
w.h.p. there exists a norm $l_*(w)$ on  $\mathbb{R}^2$   such that, denoting by $B_{l_*}(0,\ti)$   the  ball of radius $\ti$ in     norm $l_*$ and centered at the origin, we have that   at time $\ti$  all  the   nodes in  $A_{F,\rho}(0,\ti)$ are contained in $B_{l_*}(0,\ti+o(\ti))$,  and  the   nodes in
$B_{l_*}(0,\ti-o(\ti)) $
 are all affected, i.e., they belong to $A_{F,\rho}(0,\ti)$.  The corresponding geometric picture is illustrated in Figure~\ref{fig:shape_theorem}. This shows the existence of  two concentric exponentially large balls centered at the origin, such that for $\ti(w)$ in the given interval 
 all the nodes in the affected set are contained within the outer ball, and all the nodes inside the inner ball   are  affected. 
 
 \begin{figure}[!t]
\centering
\includegraphics[width=3.2in]{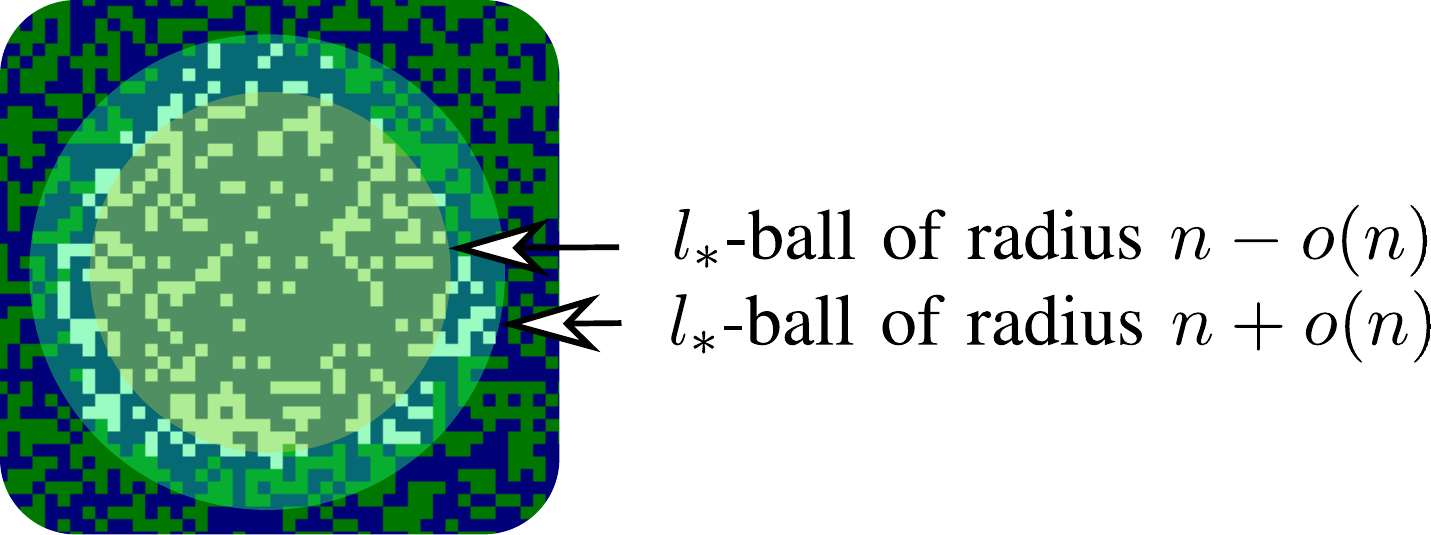}
\caption{W.h.p. there exists a norm $l_*$ on the plane, such that, at time $\ti$ all the affected nodes in $A_{F,\rho}(0,\ti)$  are contained in $B_{l_*}(0,\ti+o(\ti))$,  and all  the   nodes in  ${B_{l_*}(0,\ti-o(\ti))}$ are affected.  }
\label{fig:shape_theorem}
\end{figure}

\begin{theorem*}[Shape Theorem ---Transient]\label{Thrm:Shape_theorem}  For all $\tau \in (\tau_*,1/2)\cup (1/2,1-\tau_*)$,  $t = \ti(w) \in [2^{c_1\size} ,  2^{c_2\size}]$, and $\rho(w)=2^{c'_2\size}$, where $c_1, c_2, c'_2$, and $N$ are as defined above, and conditional on having all the nodes in $\mathcal{N}_{w/2}(0)$ being affected at time $t=0$, w.h.p. there exists a norm $l_*(w)$ on $\mathbb{R}^2$, and a  constant $c>0$, such that at time $\ti$ we have
\begin{align}\label{eq:Tess0}
B_{l_*}(0,\ti-\size^c\ti^{1/2}\log^{3/2} \ti)\cap \mathbb{Z}^2 \subset A_{F,\rho}(0,\ti) \subset B_{l_*}(0,\ti+\size^c \ti^{1/2}\log^{3/2} \ti).
\end{align}

\end{theorem*}
{ 
\begin{remark} 
In Theorem~\ref{Thrm:Shape_theorem} without loss of generality  we have assumed that the linear time scale is  chosen such that the re-scaled  limit shape of the set $A_{F,\rho}(0,\ti)$  is a unit $l_*$-ball.  
The lower and upper bounds   $2^{c_1  N} \leq t \leq 2^{c_2N}$ in this theorem reflect the following trade-off: on the one side the time scale $\ti$ is chosen large enough so that after time $\ti$ the set of affected nodes roughly resembles a ball. On the other side, the  time scale, and therefore the size of the ball, is chosen small enough to ensure that the region of space where the spreading process occurs is  sufficiently clear of any   $\theta$ or $\bar{\theta}$-affected nodes so that the process can proceed without interference.





Since as $w \rightarrow \infty$  we also have that $\ti \rightarrow \infty$, it can be of interest to investigate the limiting behavior of the norm $l^*(w)$.  Although numerical simulations suggest that $l_*$   may converge to the Euclidean norm,  this question remains open.
Finally, we also mention that similar shape theorems  have been proven in the literature for percolation models and other contact processes~\cite{kesten1986aspects,kesten1993speed,alves2002shape,drewitz2014chemical,tessera2014speed}, but none of them applies to our model. 
\end{remark}
}

To state our second result, 
we let   $\tau^* \approx 0.488$ be the solution of
\begin{align}
5\left(1+e(\tau)\right)^2 - 6 = 0,
\end{align}
where 
\begin{align}
e(\tau) = \frac{3(\tau - 0.5)+\sqrt{9(\tau-0.5)^2-7(\tau-0.5)(3\tau+0.5)}}{2(3\tau+0.5)}.
\label{eq:ftau}
\end{align}



{ 

\begin{theorem*}[Size Theorem ---Final Configuration]\label{Thrm:main_thrm}
For all   $\tau \in (\tau^*,1-\tau^*) \setminus \{1/2\}$, let 
\begin{align*}
a(\tau) = \left(1-H(\tau)\right) \left(2-(1+e(\tau))^2\right),
\end{align*}
and 
\begin{align*} 
b(\tau)= \left(1+e(\tau)\right)^2\left(1-H(\tau)\right).
\end{align*}
For all $\epsilon > 0$, w.h.p. we have 
\begin{align}
 2^{\left(a(\tau)-\epsilon\right){\size}} \le M \le 2^{\left(b(\tau)+\epsilon\right){\size}}.
\end{align}
\end{theorem*}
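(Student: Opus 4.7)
The proof establishes matching exponential bounds on $M$ by a two-sided large-deviations analysis of the initial Bernoulli$(1/2)$ configuration, combined with the Shape Theorem to handle the dynamics. The parameter $e(\tau)$ from \eqref{eq:ftau} arises on both sides as an optimization variable balancing the effective volume of a forced sub-region against its KL cost relative to the uniform measure, so that the sufficient exponent (lower bound) and the necessary exponent (upper bound) are both sharp at the critical threshold $\tau^*$.

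For the lower bound $M\geq 2^{(a(\tau)-\epsilon)N}$, I would first define a \emph{$\theta$-seed} at $v\in\mathbb{T}^2$ as the initial-time event that every node of $\mathcal{N}_{w/2}(v)$ is $\theta$-affected. By constructing an explicit biased initial configuration on a cleverly chosen sub-region of $\mathcal{N}_{3w/2}(v)$ of effective volume $(2-(1+e(\tau))^2)N$ --- biasing it toward $\bar{\theta}$-density $1-\tau$, which is just enough to force every neighborhood $\mathcal{N}(u)$ with $u \in \mathcal{N}_{w/2}(v)$ to carry $<\tau N$ particles in state $\theta$ --- Stirling's approximation together with a standard concentration lemma yields
\[
\Pr[\text{$\theta$-seed at }v]\;\geq\;2^{-((1-H(\tau))(2-(1+e(\tau))^2)+o(1))N}\;=\;2^{-(a(\tau)+o(1))N}.
\]
Next I would tile a search region of side $L=2^{(a(\tau)/2+\epsilon/4)N}$ centered at the origin with $\Theta(L^2/N)$ disjoint candidate seed locations (spaced so that the biased sub-regions of different candidates are disjoint and hence independent) and use a second-moment argument to produce a seed at some $v^*$ with $\|v^*\|_\infty\leq L$ w.h.p. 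Invoking the Shape Theorem with time $t$ chosen so that $L\ll t\leq 2^{c_2 N}$, with $c_2<(1-H(\tau))/2$, the set $A_{F,\rho}(v^*,t)$ at time $t$ contains the $l_*$-ball $B_{l_*}(v^*,t-o(t))$, and in particular the origin. During $[0,t]$ every $\theta$-particle in this affected region becomes p-stable and flips to $\bar\theta$, while every $\bar\theta$-particle is stable, so the origin ends up in a monochromatic $\bar\theta$-ball of size at least $(2(t-L))^2\geq 2^{(a(\tau)-\epsilon)N}$ in the final configuration.

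For the upper bound $M\leq 2^{(b(\tau)+\epsilon)N}$, suppose for contradiction that the origin is contained in a (say) $\theta$-monochromatic $l_\infty$-ball $B$ of side at least $2^{(b(\tau)+\epsilon)N/2}$. Every particle in $B$ is $\theta$ in the final configuration, and any particle initially in state $\bar\theta$ that contributed to this monochromaticity must have flipped to $\theta$ while p-stable, hence must have had at least $\tau N$ $\theta$-neighbors at the moment of its flip. Tracing the chain of flips backward in time --- with the total number of flips bounded by a Lyapunov (monotone-energy) argument, so that chains have only polynomially many links --- I would extract a genuinely necessary witness in the initial configuration: some $l_\infty$-ball $B_0$ of effective volume $(1+e(\tau))^2 N$, sitting inside or immediately adjacent to $B$, whose empirical $\theta$-density is at least $\tau$. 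A Stirling-type large-deviation bound gives $\Pr[B_0\text{ biased as required}]\leq 2^{-((1+e(\tau))^2(1-H(\tau))-o(1))N}=2^{-(b(\tau)-o(1))N}$, and a union bound over the at most $O(2^{b(\tau)N})$ candidate positions of $B_0$ (and the two color choices), exploiting the $\epsilon N$ slack in the exponent, produces an $o(1)$ probability for the existence of such a $B_0$, contradicting the assumed existence of $B$.

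The principal technical difficulty is the upper bound. Unlike the lower bound, where a manifestly sufficient seed can be constructed and computed, the upper bound requires isolating an event that is genuinely \emph{necessary} for a large monochromatic ball to form under the long-range, temporally intricate flipping dynamics; the back-tracking through flip chains and the careful treatment of the boundary layer of $B$ (where stability forces a specific penumbra of same-state particles outside) are the most subtle parts. The parameter $e(\tau)$ from \eqref{eq:ftau} is chosen precisely so that the exponents $a(\tau)=(1-H(\tau))(2-(1+e(\tau))^2)$ and $b(\tau)=(1+e(\tau))^2(1-H(\tau))$ simultaneously degenerate at $\tau^*\approx 0.488$, matching the regime where the theorem becomes vacuous.
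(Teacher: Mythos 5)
Your proposal misidentifies where the two exponents come from, and this leads to a quantitative impossibility in the lower bound. You claim a ``$\theta$-seed'' (all of $\mathcal{N}_{w/2}(v)$ affected at time $0$) has probability $2^{-(a(\tau)+o(1))N}$ with $a(\tau)=(1-H(\tau))(2-(1+e(\tau))^2)\approx 0.8(1-H(\tau))$. But that event is contained in the event that the single node $v$ is affected, whose probability is $\Theta(2^{-(1-H(\tau'))N}/\sqrt{N})$ by Lemma~\ref{Lemma:unstableprob}; since $a(\tau)<1-H(\tau)$, your claimed seed probability exceeds an upper bound on it, so the large-deviation computation cannot be right. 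In the paper the roles are reversed: the exponent $b(\tau)=(1+e(\tau))^2(1-H(\tau))$ is (twice) the cost of the rare \emph{trigger} (a radical/expandable region of inflated radius $(1+\epsilon')w$, Lemmas~\ref{Lemma:radical_region_prob} and~\ref{Lemma:rho}), so the nearest trigger sits at distance $\rho\approx 2^{b(\tau)N/2}$; the exponent $a(\tau)$ is \emph{not} a seed cost but the square of the width $\rho'$ of the annulus around that trigger that is free of opposite-type expandable regions (Lemma~\ref{Lemma:rhop}), which bounds the radius of the firewall that can be built before opposing spreads arrive. Relatedly, your reading of $\tau^*$ is off: neither $a$ nor $b$ vanishes there; $5(1+e(\tau))^2=6$ is exactly the condition $\rho\lesssim\rho'^{3/2}$ needed so that the $O(\rho/\rho')$ incremental growth steps, each with fluctuation $O(N^C\sqrt{\rho'}\log^{3/2}\rho')$ from the Shape Theorem, accumulate to less than one step length, i.e.\ so the race argument closes.

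Beyond the exponent, the lower bound as written skips the two steps that constitute most of the paper's proof. First, running the Shape Theorem for time $t\gg L$ from your seed and asserting that the origin's neighborhood simply turns $\bar\theta$ ignores interference: opposite-type expandable regions exist at comparable densities, and their spreads must be shown to still be far away when the origin is engulfed (the paper's events $A_1$--$A_8$ and the choice of $\rho'$). Second, ``affected'' does not mean ``monochromatic in the final configuration''; the paper must form an indestructible firewall (Lemma~\ref{Lemma:firewall}) of radius $\sim\rho'/N$ around the origin and then let its interior resolve, which is where the bound $M\ge (\rho'/N)^2=2^{(a(\tau)-\epsilon)N}$ actually comes from. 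For the upper bound, your backward-tracing ``necessary witness'' of volume $(1+e(\tau))^2N$ with $\theta$-density $\ge\tau$ is not established and is not needed: the paper gets $M\le 2^{(b(\tau)+\epsilon)N}$ essentially for free by applying the lower-bound construction at a grid of points within distance $N(\rho+\rho')$ of the origin in all four quadrants, so that the origin is hemmed in by exponentially large monochromatic balls of \emph{both} colors and its own ball cannot exceed $4N^2(\rho+\rho')^2$. If you want to salvage your route, you would need to (i) replace the seed by the paper's weaker ``expandable region'' notion with the correct cost $b(\tau)$, and (ii) supply the annulus/firewall/race machinery; the witness-extraction upper bound would be a genuinely new (and much harder) argument.
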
 
}

The numerical values for $a(\tau)$ and $b(\tau)$  are plotted in Figure~\ref{fig:aaprime}. 

\begin{remark} For $\tau \in (\tau^*,1-\tau^*)\setminus \{1/2\}$,  as the intolerance gets farther from one half in both directions, larger monochromatic balls are formed w.h.p.
 An intuitive  explanation for this behavior is that as $\tau$ decreases farther from $1/2$, agents become more tolerant and  configurations that can start a cascading process  become less likely, and hence  located farther from each other.  The cascading process can then evolve without interference from other cascading processes for a longer amount of time, and lead to the formation of larger monochromatic balls. On the other hand, as $\tau$ increases farther from $1/2$ agents become less tolerant. Since unstable agents will flip their state only if this makes them stable, configurations that can start a cascading process  become less likely and located farther from each other in this case as well. By the same argument as above, once the cascading process is ignited, it leads to larger monochromatic balls.
\end{remark}

\subsection{The Infinite Lattice Case}
We can consider similar dynamics occurring on the infinite lattice $\mathbb{Z}^2$ instead of the finite torurs $\mathbb{T}^2$. 
 Using Theorem B3 of \cite[p.~3]{liggett2013stochastic} it is easy to verify that  the process on  $\mathbb{Z}^2$   exists, and is unique, and is a Feller Markov process on $\{{\theta},\bar{\theta}\}^{\mathbb{Z}^2}$.
The following corollary follows from the proof of Theorem~\ref{Thrm:main_thrm}.

\begin{corollary*}[Size Theorem ---Infinite Lattice]\label{Corr:size_thrm}
 For all $\epsilon>0$, $\tau \in (\tau^*,1-\tau^*) \setminus \{1/2\}$, let $\ti^* = 2^{{(a(\tau)+\epsilon)}\size}$. For all $\ti \ge \ti^*$, w.h.p. we have 
\begin{align} 
 2^{\left(a(\tau)-\epsilon\right){\size}} \le M_\ti \le  2^{\left(b(\tau)+\epsilon\right){\size}}.
\end{align}
\end{corollary*}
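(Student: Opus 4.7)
The plan is to reduce the corollary to the proof of Theorem~\ref{Thrm:main_thrm} by combining a locality/coupling argument with a persistence statement for large monochromatic cores. By the assumptions on the flipping-time distribution $F$ (Definition~\ref{Def:FlippingTime}) together with standard finite-speed-of-propagation bounds for interacting particle systems, the state at node $u$ and time $t$ depends only on the initial configuration and clock realizations inside a (random) $l_\infty$-ball around $u$ whose radius $R(u,t)$ is, with overwhelming probability, at most linear in $t$ in some norm --- this is consistent with the linear growth of the affected set established by Theorem~\ref{Thrm:Shape_theorem}. Consequently, for any horizon $T$ and any radius $R_0$, I can couple the $\mathbb{Z}^2$ dynamics with the torus dynamics on $(\mathbb{Z}/h\mathbb{Z})^2$ whenever $h/2 > R_0 + R(0,T)$, so that the two processes agree on $\mathcal{N}_{R_0}(0)$ throughout $[0,T]$.

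For the lower bound, I would inspect the proof of Theorem~\ref{Thrm:main_thrm}, which on the torus produces, w.h.p., a monochromatic $l_\infty$-ball of size at least $2^{(a(\tau)-\epsilon)N}$ around the origin by time $t^* = 2^{(a(\tau)+\epsilon)N}$, via a cascading process triggered by a nearby seed and governed by the shape theorem. Once such a region has formed, it persists: every particle whose $w$-neighborhood lies entirely inside a monochromatic region has $r_t = 1 \ge \tau$ and is stable, so a concentric monochromatic core of linear radius $r - w \sim r$ remains intact at all later times. Taking $h$ exponentially large in $N$ with $\log h = \omega(w^2)$ and invoking the coupling, this lower bound transfers to $M_t(0)$ on $\mathbb{Z}^2$ for every $t \ge t^*$.

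For the upper bound, I would verify that the torus proof of $M \le 2^{(b(\tau)+\epsilon)N}$ is in fact uniform in time: it proceeds by a union bound over all possible centers and sizes of large monochromatic balls, combined with a probabilistic estimate that the initial configuration contains the rare seed patterns required to ignite a cascade of the corresponding size. Because this estimate is derived from the initial configuration and never uses that the evolution has terminated, it actually gives $\sup_{t \ge 0} M_t(0) \le 2^{(b(\tau)+\epsilon)N}$ w.h.p.\ on the torus; via the coupling, the same bound holds on $\mathbb{Z}^2$.

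The main obstacle is that the coupling of the first paragraph is valid only up to a fixed horizon $T$, whereas the corollary demands a conclusion uniform over $t \in [t^*, \infty)$. I would address this by letting $h = h(t)$ grow with $t$ (for instance polynomially in $t$ multiplied by $2^{cN}$ for a suitable constant $c$), so that $\log h = \omega(w^2)$ is preserved and the error bounds of Theorem~\ref{Thrm:main_thrm}, which decay like $o(w^{-2})$ uniformly in $h$ under that constraint, still yield the desired w.h.p.\ statement. A secondary subtlety is ensuring that, in the lower-bound construction on $\mathbb{Z}^2$, the seed near the origin is not pre-empted or destabilized by interference from other simultaneous cascades; this is precisely what the choice $\rho = 2^{c_2 N + 1}$ in Theorem~\ref{Thrm:Shape_theorem} takes care of, by guaranteeing an exponentially clear region around each seed with high probability.
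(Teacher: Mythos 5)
There is a genuine gap in the persistence step, which is the whole content of this corollary. You argue that once a monochromatic ball of radius $r$ has formed, ``every particle whose $w$-neighborhood lies entirely inside a monochromatic region has $r_t=1\ge\tau$ and is stable, so a concentric monochromatic core of radius $r-w$ remains intact at all later times.'' This does not follow: being stable at time $t$ is not preserved by the dynamics. Particles within distance $w$ of the boundary have neighborhoods reaching outside the ball and may flip; once enough of them flip, particles at depth between $w$ and $2w$ can become p-stable and flip in turn, and the region can in principle erode inward layer by layer. Ruling this out is exactly the role of Lemma~\ref{Lemma:firewall} (the firewall lemma): a \emph{monochromatic annulus} of width $\sqrt{2}w$ and radius at least $w^3$ is a.s.\ indestructible for $\tau\in(\tau^*,1/2)$, by a geometric argument showing no particle in it can ever become p-stable first. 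The paper's proof of Theorem~\ref{Thrm:main_thrm} already builds such a firewall around the origin before any $\bar\theta$-affected nodes arrive, and the interior then stays monochromatic for all $\ti\ge\ti^*$; that is why the corollary ``follows from the proof'' with no extra work. Your proposal needs to replace the naive core argument by an appeal to this lemma, with its hypotheses on $\tau$, the width, and the radius checked.

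Two secondary points. First, the torus-to-lattice coupling (and the $h=h(\ti)$ device to handle an unbounded horizon) is unnecessary here: the proof of Theorem~\ref{Thrm:main_thrm} is carried out with $G_w$ embedded in $\mathbb{Z}^2$ from the outset, and every event it uses is supported on an $l_\infty$-ball of radius exponential in $N$ around the origin, so the lattice statement is obtained directly rather than by transfer from the torus. Second, your description of the upper bound is not what the paper does: the bound $M_\ti\le 2^{(b(\tau)+\epsilon)N}$ is not a union bound over seed patterns, but follows from applying the \emph{lower} bound to a grid of nearby centers, so that w.h.p.\ protected exponential monochromatic balls of \emph{both} states form within distance $O(N(\rho+\rho'))$ of the origin by time $\ti^*$ and persist (again by firewalls), capping the origin's monochromatic ball at $4N^2(\rho+\rho')^2$ for all $\ti\ge\ti^*$.
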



 \begin{figure}
\centering
\includegraphics[width=3.5in]{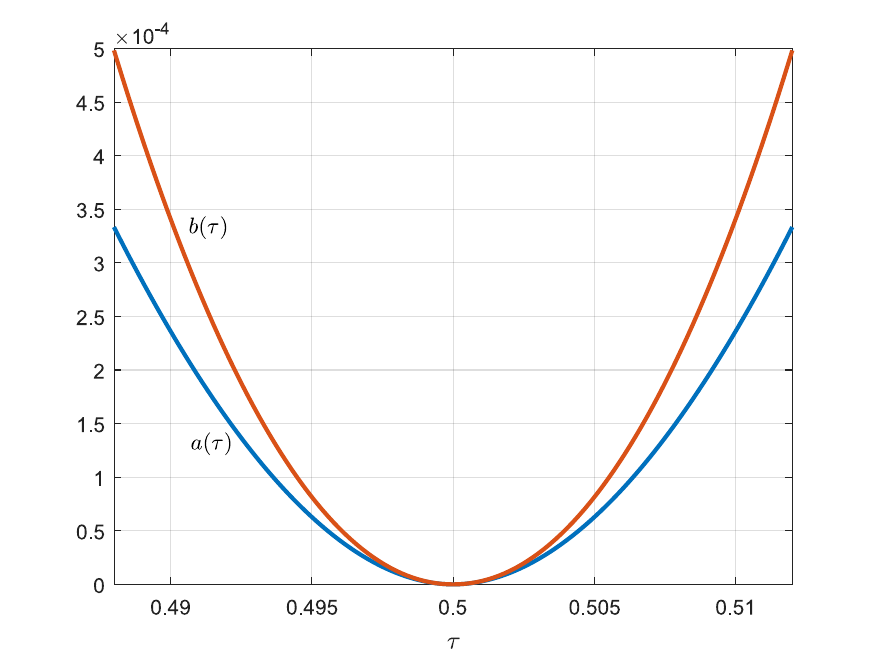}
\caption{Exponent multipliers $a(\tau)$ and $b(\tau)$ for the lower bound and upper bounds on the size of the largest monochromatic balls.} 
\label{fig:aaprime}
\end{figure}


\subsection{Proof Outline}
\

\textit{Shape Theorem.}
 To prove the shape theorem, we   consider a variation of our process on $G_w$.  We describe the spreading of the affected nodes around the origin for this new process and then use a coupling argument to show that the same spreading also occurs in the original process. To describe the spreading in the new process, 
we adapt a strategy developed by Tessera \cite{tessera2014speed} for first passage percolation (FPP). However, while Tessera's result relies on Talagrand's concentration inequality \cite[Proposition~8.3]{talagrand1995concentration} (also restated as Theorem~1 in \cite{tessera2014speed})  for the spreading time in FPP, our shape theorem is based on a concentration bound that we develop independently for the spread of affected nodes in our new process, by extending some results of Kesten's  \cite{kesten1986aspects,kesten1993speed}. This bound is a key step in our proof.

We  show that the set of affected nodes at time $\ti$ in the new process is  close  to the set of nodes whose expected time of becoming affected is at most $\ti$, and that there exists a  norm $l_*$ such that the latter set is also close to an $l_*$-ball of radius $\ti$. The former statement is proved using the concentration bound (our Theorem~\ref{Thrm:Theorem_Concentration}), and the latter statement is proved  showing that the expected time of the spread from one point to another in $G_w$ is asymptotically close to the length of a corresponding ``optimal geometric path" (again using Theorem~\ref{Thrm:Theorem_Concentration}).

To derive the bound on the spreading time of the affected nodes, we represent the difference between the random spreading time and the mean spreading time between any two nodes as a sum of martingale differences and, after estimating the sum of squares of these differences, apply a martingale inequality developed by Kesten (re-stated as Theorem~\ref{Thrm:Kesten_Azuma} of this paper). Along the way, to bound the  martingale differences, we use a modified result from \cite{kesten1986aspects} to compare our process with two FPP processes on $\mathbb{Z}^2$.

\

\textit{Size Theorem.}
The main idea of the proof in this case is to show that w.h.p. while the spread of the $\theta$-affected nodes reaches the origin, the $\bar{\theta}$-affected nodes  are still at distances at least exponential in $\size$ from the origin.  Once the origin is reached, the p-stable particles around it  will w.h.p. lead to the formation of an exponentially large ``firewall" (i.e., an annulus of particles in the same state)  that is indestructible by other spreading processes. The interior of this firewall will then become monochromatic, so that in the final configuration there will be  w.h.p. an exponentially large monochromatic ball around the origin.

To elaborate on this main idea, we define an \textit{expandable region}, that  is composed of a local configuration of particles and a possible set of flips inside it, that can lead to at least one new affected node outside of it.  We consider the expandable region closest to the origin in the $l_*$ norm, and denote its type by $\theta$. We denote its $l_*$-distance to the origin by $X$, and  consider an $l_*$-ball of radius $X$ at the origin. We then argue that, since there are no expandable regions in this ball, any spreading of affected nodes of any type inside this ball dies out quickly, while  the expandable region starts a spreading of $\theta$-affected nodes towards the origin w.h.p. 
We then find an upper bound $X \leq \rho$, where $\rho=\rho(\tau,\size)$, that holds w.h.p.,  and choose $\rho'=\rho'(\tau,\size)$ such that  w.h.p there is no $\bar{\theta}$-expandable region inside the annulus $B_{l_*}(0,X+\rho') \setminus B_{l_*}(0,X)$.
We  consider the extremal case $X=\rho$ and study the ``race" between the possible spreads of the $\bar{\theta}$-affected nodes from outside $B_{l_*}(0,\rho+\rho')$ and the spread of the expandable region at distance $\rho$ towards the origin,  see Figure~\ref{fig:gradual_growth}.  

 \begin{figure}[!t]  
\centering
\includegraphics[width=3.7in]{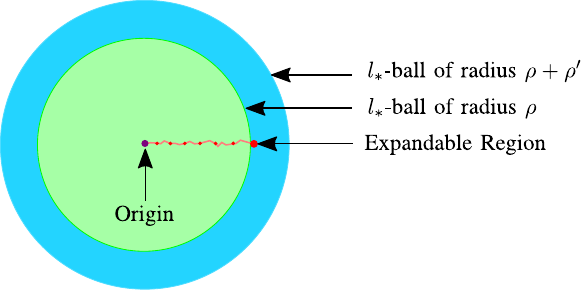}
\caption{The gradual spread of affected nodes from the expandable region towards the origin in time increments of size $\rho'/4$.} 
\label{fig:gradual_growth}
\end{figure} 

We consider the gradual spreading of the expandable region in time intervals of $\rho'/4$ towards the origin. Using the shape theorem, we argue that w.h.p. the origin will eventually be contained in a neighborhood such that all of its nodes are $\theta$-affected. We then show that  the origin is quickly surrounded by  an exponentially large \textit{firewall} while any spreading of affected nodes started from outside $B_{l_*}(0,\rho+\rho')$  is still at large distances from it. This firewall is an indestructible monochromatic annulus which isolates the origin from the outside flips, see Figure~\ref{fig:fw0}. It  will thus protect the cascading process which w.h.p. leads to the formation of a monochromatic ball of size exponential in $\size$ containing the origin. This shows that the lower bound occurs w.h.p. 
To see that the upper bound also occurs w.h.p. we note that in a large enough exponential size neighborhood around the origin, w.h.p. the origin will be surrounded by exponentially large monochromatic balls of particles in both states protected by firewalls. 
\

In our proofs throughout the paper, we focus on the case where $\tau < 1/2$. The results for $\tau > 1/2$ follow by a simple argument provided in Section~\ref{Subsec:Extension}.

\begin{figure}[!t] 
\centering
\includegraphics[width=1.6in]{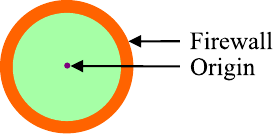}
\caption{A firewall formed around the origin.} 
\label{fig:fw0}
\end{figure}

\section{Preliminary and Previous Results}\label{Sec:Prelim}
We begin with the following elementary lemma giving lower and upper bounds for the probability of a node being affected.

\begin{lemma*}\label{Lemma:unstableprob}
Let $p_u$ be the probability of being $\theta$-affected for an arbitrary node in the initial configuration. There exist positive constants $c_l$ and $c_u$ which depend only on $\tau$ such that
\begin{align*}
c_{l}\frac{2^{-[1-H(\tau')]{\size}}}{\sqrt{{\size}}}  \le p_u \le c_{u}\frac{2^{-[1-H(\tau')]{\size}}}{\sqrt{{\size}}},
\end{align*}
 where $\tau' = \frac{\tau {\size} - 2}{{\size}-1}$, and $H$ is the binary entropy function.
\end{lemma*}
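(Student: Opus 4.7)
The plan is to reduce the event ``$u$ is $\theta$-affected'' to a single binomial tail and then apply standard Stirling asymptotics.

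First I would unpack the definitions in the case $\tau<1/2$, which is the setting the paper focuses on. By the remark following Definition~\ref{Def:FlippingTime}, when $\tau<1/2$ every unstable particle is automatically p-stable (flipping raises $r_\ti$ past $1/2$, which exceeds $\tau$), so a node $u$ is $\theta$-affected if and only if a $\theta$-particle placed at $u$ would be \emph{unstable}, i.e., $r_0(u)<\tau$. Writing $K$ for the number of $\theta$-particles among the $\size-1$ neighbors of $u$ distinct from $u$ itself, instability reads $K+1<\tau\size$. Since $\tau \size = \lceil \tilde{\tau} \size \rceil$ is an integer by definition of the intolerance parameter and $K$ is integer-valued, this is equivalent to $K \le \tau \size - 2$. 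In the initial i.i.d.\ Bernoulli$(1/2)$ configuration, $K\sim \mathrm{Binomial}(\size-1,1/2)$, whence
\begin{align*}
p_u \;=\; \Pr\!\left(K \le \tau \size - 2\right) \;=\; \Pr\!\left(K \le \tau'(\size-1)\right),
\end{align*}
with $\tau' = (\tau \size - 2)/(\size-1)$, which is bounded strictly below $1/2$ by a constant depending only on $\tau$ once $\size$ is large.

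Next I would apply the standard binomial-tail asymptotics. Stirling's formula gives
\begin{align*}
\binom{\size-1}{\lfloor \tau'(\size-1)\rfloor}\,2^{-(\size-1)} \;=\; \Theta\!\left(\frac{2^{-(1-H(\tau'))(\size-1)}}{\sqrt{\size-1}}\right),
\end{align*}
with implicit constants depending only on $\tau$. The lower bound on $p_u$ follows by keeping just this single largest term in the tail sum. For the matching upper bound, since $\tau'$ is bounded below $1/2$, the ratio of consecutive binomial probabilities $\Pr(K=j-1)/\Pr(K=j) = j/(\size-j)$ is at most some $r(\tau)<1$ uniformly over $j\le \lfloor\tau'(\size-1)\rfloor$; hence the whole left tail is geometrically dominated by its rightmost term and is therefore of the same order. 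Finally, $2^{-(1-H(\tau'))(\size-1)}$ and $2^{-(1-H(\tau'))\size}$ differ only by a $\tau$-dependent multiplicative factor, and $\sqrt{\size-1}=\Theta(\sqrt{\size})$, so both discrepancies can be absorbed into the constants $c_l,c_u$.

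I do not anticipate any real obstacle: the only point that requires attention is the clean bookkeeping between the instability threshold in units of $\size$ and the binomial parameter in units of $\size-1$, which is precisely what motivates the particular choice of $\tau'$ in the statement. Everything else is a textbook Stirling/Chernoff estimate combined with the identity that, for Bernoulli$(1/2)$, the exponent of the binomial coefficient evaluated at a fractional mass $\tau'$ is exactly $H(\tau')$, producing the advertised exponent $1-H(\tau')$.
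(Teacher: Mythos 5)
Your proposal is correct and follows essentially the same route as the paper: both reduce $p_u$ to the binomial tail $\Pr(K\le \tau\size-2)$ with $K\sim\mathrm{Binomial}(\size-1,1/2)$ (the paper's ``two unit reduction''), bound the tail between its largest term and a geometric multiple of it (the paper's factor $\frac{1-\tau'}{1-2\tau'}$ is exactly your geometric-domination constant), and finish with Stirling. The only difference is that you spell out the definitional unpacking and the $\size$ versus $\size-1$ bookkeeping more explicitly, which the paper leaves implicit.
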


\begin{proof}
We have 
\begin{align} \label{eq:pu}
p_u = \frac{1}{2^{\size-1}} \sum_{k = 0 }^{\tau {\size} - 2}{{{\size}-1}\choose{k}},
\end{align}
where the two unit reduction is to account for the strict inequality and the particle at the node itself at the center of the neighborhood. Let $\tau' = \frac{\tau {\size} - 2}{{\size}-1}$. After some   algebra, we have
\begin{align*}
{{\size}-1 \choose \tau' ({\size}-1)} \le \sum_{k = 0 }^{\tau' ({\size}-1)}{{{\size}-1}\choose{k}} \le \frac{1-\tau'}{1-2\tau'}{{\size}-1 \choose \tau' ({\size}-1)},
\end{align*} 
and using Stirling's formula, there exist constants $c,c' \in \mathbb{R}^+$ such that
\begin{align*}
{c\frac{2^{H(\tau')({\size}-1)}}{\sqrt{({\size}-1)\tau'(1-\tau')}}} \le {{\size}-1 \choose \tau' ({\size}-1)} \le c'\frac{2^{H(\tau')({\size}-1)}}{\sqrt{({\size}-1)\tau'(1-\tau')}}.
\end{align*}
The result follows by combining the above inequalities.  
\end{proof} 

The following lemma is a consequence of Lemma~\ref{Lemma:unstableprob}.
\begin{lemma*} \label{Lemma:R_unhappy}
Let ${\radius}  =  2^{c\size}$ where $0<c<[1-H(\tau')]/2$ and $\tau'$ is as defined in Lemma~\ref{Lemma:unstableprob}. In the initial configuration, the following event occurs w.h.p. 
\begin{align*}
A = \left\{\nexists \  \theta\mbox{-affected nodes in } \mathcal{N}_{\rho}\right\}. 
\end{align*}
\end{lemma*}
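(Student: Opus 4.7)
The plan is to prove Lemma~\ref{Lemma:R_unhappy} by a direct union bound over all nodes inside $\mathcal{N}_\rho$, using the upper bound on the probability of a node being $\theta$-affected that was just established in Lemma~\ref{Lemma:unstableprob}. Since the event ``$u$ is $\theta$-affected'' depends only on the initial states inside $\mathcal{N}(u)$, we do not even need independence across nodes for an upper bound on the probability of the union.

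First, I would count the nodes: $|\mathcal{N}_\rho| = (2\rho+1)^2 \leq 9\rho^2 = 9 \cdot 2^{2cN}$ for $N$ large. By Lemma~\ref{Lemma:unstableprob}, each such node is $\theta$-affected with probability at most
\begin{align*}
p_u \leq c_u \, \frac{2^{-[1-H(\tau')]N}}{\sqrt{N}}.
\end{align*}
Applying the union bound,
\begin{align*}
\mathbb{P}(A^c) \leq (2\rho+1)^2 \, p_u \leq 9\, c_u \, \frac{2^{\left(2c - [1-H(\tau')]\right) N}}{\sqrt{N}}.
\end{align*}

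Next, I would invoke the hypothesis $0 < c < [1-H(\tau')]/2$, which gives $2c - [1-H(\tau')] < 0$, so the right hand side decays exponentially in $N = (2w+1)^2$. In particular this decay is faster than any polynomial in $w$, so $\mathbb{P}(A^c) = o(w^{-2})$, matching the ``w.h.p.'' rate of convergence adopted throughout the paper.

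There is no real obstacle here; the only subtlety to check is that the exponential upper bound on $p_u$ in Lemma~\ref{Lemma:unstableprob} is tight enough to beat the $\rho^2 = 2^{2cN}$ factor from the volume of the ball, and this is precisely the purpose of the constraint $c < [1-H(\tau')]/2$ in the statement. Note that the argument remains purely combinatorial: it does not require independence of the affectedness events across different centers, since the union bound is used rather than a second-moment estimate.
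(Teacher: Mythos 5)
Your proof is correct and is exactly the argument the paper intends: the paper states this lemma as an immediate consequence of Lemma~\ref{Lemma:unstableprob} without writing out the details, and the union bound over the $(2\rho+1)^2 = 2^{2cN + o(N)}$ nodes combined with the condition $2c < 1-H(\tau')$ is the whole content. Your observation that no independence across centers is needed is also accurate.
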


\begin{definition}[$m$-Blocks and Monochromatic Blocks]
We define an $m$-\textit{block} to be an $m/2$-neighborhood. A \textit{monochromatic block} is a block whose particles are all in the same state. When $m$ is not specified, by a \textit{block} we mean a $w$-block.
\end{definition}

\begin{definition}[Region of Expansion]
 We call  a \emph{region of expansion} of type $\theta$ any neighborhood whose configuration is such that     if we change all the $\theta$-particles in a $w$-block to $\bar{\theta}$, then all the $\theta$-particles on its outer boundary (i.e., the set of particles in the set composed by the $(w+2)$-block co-centered with the $w$-block and excluding the $w$-block itself) are p-stable.
\end{definition}
  The next lemma is a restatement of Lemma 8 in \cite{omidvar2017self2}.

\begin{lemma*}[\cite{omidvar2017self2}]  \label{Lemma:monoch_spread_1}
Let $\tau \in (\tau_*,1/2)$ and let $\mathcal{N}_r$ be any neighborhood with radius $r \in (w+2,2^{c{\size}})$ where $c\in (0,0.5{[1-H(\tau')]})$ in the initial configuration.  Then, $\mathcal{N}_r$ is a region of expansion w.h.p.
\end{lemma*}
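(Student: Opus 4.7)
Since $\tau<1/2$, every unstable particle is automatically p-stable, so the region-of-expansion condition reduces to a purely local combinatorial event: for every $w$-block $B\subset \mathcal{N}_r$ and every $\theta$-particle $u$ on the outer boundary of $B$, the number of $\theta$-neighbors of $u$ after replacing all $\theta$-particles in $B$ by $\bar{\theta}$ must be strictly less than $\tau\size$. The plan is to upper bound the probability of violating this condition at a single pair $(B,u)$ and then close by a union bound over all admissible $(B,u)$.

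For a fixed pair $(B,u)$, the nodes in $\mathcal{N}(u)\cap B$ are deterministically $\bar{\theta}$ after the flip, while those in $\mathcal{N}(u)\cap B^c$ retain their independent Bernoulli$(1/2)$ initial states. I will first carry out the direct geometric count of $|\mathcal{N}(u)\cap B|$ as $u$ ranges over the outer boundary of $B$, and check that the maximum of $|\mathcal{N}(u)\cap B^c|$ (i.e., the worst case) is attained at a corner position, giving $|\mathcal{N}(u)\cap B^c|\le \tfrac{3}{4}\size + O(w)$. Conditioning on $u$ being $\theta$, the number of $\theta$-neighbors of $u$ after the flip is then a binomial with at most $\tfrac{3}{4}\size + O(w)$ trials and success probability $1/2$. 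Since $\tau>\tau_*>3/8$, the mean $\tfrac{3}{8}\size + O(w)$ of this binomial is well below $\tau\size$, so a standard Chernoff estimate yields the failure probability at $(B,u)$ bounded by $2^{-\tfrac{3}{4}\size\,(1-H(4\tau/3))\,+\,o(\size)}$.

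Finally I will apply the union bound: there are at most $(2r+1)^2\le 2^{2c\size+O(1)}$ admissible $w$-blocks in $\mathcal{N}_r$ and $O(\sqrt{\size})$ outer-boundary positions per block, so the probability that $\mathcal{N}_r$ fails to be a region of expansion is at most $2^{2c\size}\cdot O(\sqrt{\size})\cdot 2^{-\tfrac{3}{4}\size(1-H(4\tau/3))+o(\size)}$. To absorb the $2^{2c\size}$ term I will combine the defining identity \eqref{Eq:tau_*} of $\tau_*$ with the monotonicity of the map $\tau\mapsto \tfrac{3}{4}(1-H(4\tau/3))-(1-H(\tau))$ on $(\tau_*,1/2)$, obtaining $\tfrac{3}{4}(1-H(4\tau/3))>1-H(\tau)$, while the hypothesis $c<\tfrac{1}{2}(1-H(\tau'))$ together with $\tau'\to\tau$ gives $2c<1-H(\tau)+o(1)$. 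Hence the overall exponent is linear and strictly negative in $\size$, decaying much faster than $w^{-2}$. The only delicate point is pinning down the geometric constant $3/4$ and verifying that it matches the definition of $\tau_*$ in \eqref{Eq:tau_*}; everything else is a routine union bound combined with a Chernoff concentration estimate.
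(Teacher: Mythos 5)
Your proposal is correct and follows what is essentially the intended argument: the paper cites the proof from \cite{omidvar2017self2}, but the defining identity \eqref{Eq:tau_*} of $\tau_*$ is exactly the exponent balance $\tfrac{3}{4}\bigl(1-H(\tfrac{4}{3}\tau)\bigr)$ versus $1-H(\tau)$ that your geometric count ($|\mathcal{N}(u)\cap B^c|\le \tfrac{3}{4}\size+O(w)$), Chernoff estimate, and union bound over the at most $2^{2c\size+O(1)}\cdot O(\sqrt{\size})$ pairs $(B,u)$ produce. The reduction of p-stability to instability for $\tau<1/2$, the monotonicity of $\tau\mapsto\tfrac{3}{4}(1-H(4\tau/3))-(1-H(\tau))$, and the absorption of the $2^{2c\size}$ entropy factor via $2c<1-H(\tau')\to 1-H(\tau)$ are all handled correctly.
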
 

{
\begin{remark}
Equation (\ref{Eq:tau_*}), which  provides the value of $\tau_*$, is derived in the proof of the above lemma. 
\end{remark}
}

Let $\mathcal{M}$ denote an arbitrary $m$-block with $m\ge w$ and
{ 
\begin{align*}
\mathcal{I}_{\mathcal{M}} := \bigcup_{v\in \mathbb{T}^2} \left\{ \mathcal{M} \cap \mathcal{N}_{w/2}(v) \right\}.
\end{align*} 
}
Also, let $W_{I}$ be the random variable representing the number of particles in state $\bar{\theta}$ in $I \in \mathcal{I}_{\mathcal{M}}$, and ${\size}_I$ be the total number of particles in $I \in \mathcal{I}_{\mathcal{M}}$.

\begin{definition}[Good Block]
 For any $\epsilon \in (0,1/2)$, $\mathcal{M}$ is called a \textit{good $m$-block} of type $\theta$, if and only if  for all $I\in \mathcal{I}_{\mathcal{M}}$ we have $W_I-{\size}_I/2  < {\size}^{1/2+\epsilon}$. Otherwise $\mathcal{M}$ is called a \textit{bad $m$-block} (see Figure~\ref{fig:goodbad}). 
\end{definition} 
 
 By the following lemma (which is a restatement of Lemma 11 in~\cite{omidvar2017self2}), an $\msize$-block is a good block w.h.p. Since   for sufficiently large $\size$ the number of  particles in different states inside a good block is ``balanced'', a node whose entire neighborhood is contained in a good block cannot be a $\theta$-affected node.

 \begin{figure}[!t]
\centering
\includegraphics[width=2in]{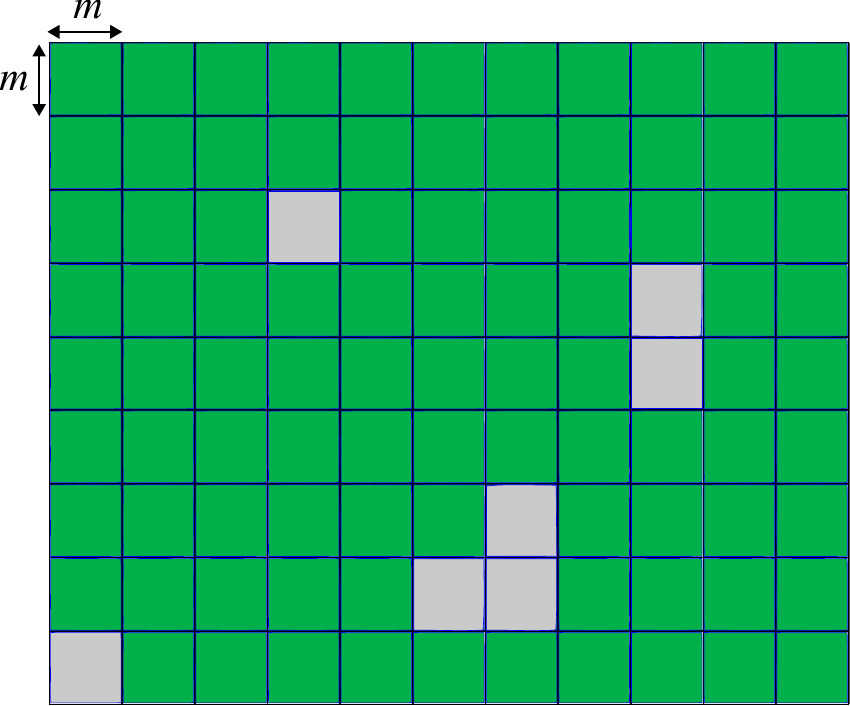}
\caption{Part of the lattice partitioned into $m$-blocks.  Green and gray indicate good and bad blocks respectively. }
\label{fig:goodbad}
\end{figure}

\begin{lemma*} \label{Lemma:goodblock}
Let $\epsilon \in (0,1/2)$. There exists a constant $c>0$, such that, in the initial configuration, for all $I\in \mathcal{I}_{\mathcal{M}}$ we have
\begin{align*} 
W_I-{\size}_I/2 < {\size}^{1/2+\epsilon}
\end{align*} 
with probability at least
\begin{align*}
1 - e^{-c{\size}^{2\epsilon}}.
\end{align*} 
\end{lemma*}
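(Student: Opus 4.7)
The plan is to combine a standard concentration bound for each fixed set $I$ with a union bound over the polynomially many distinct intersections in $\mathcal{I}_{\mathcal{M}}$. Since each particle's initial state is an independent $\mathrm{Bernoulli}(1/2)$ and the sets $I$ are deterministic (they depend only on the choice of $\mathcal{M}$ and $v$, not on the states), $W_I$ is a sum of $N_I$ independent $\mathrm{Bernoulli}(1/2)$'s with mean $N_I/2$. So Hoeffding's inequality immediately yields
\begin{align*}
\Pr\!\left(W_I - N_I/2 \geq N^{1/2+\epsilon}\right) \leq \exp\!\left(-\frac{2\,N^{1+2\epsilon}}{N_I}\right).
\end{align*}
Because every $I = \mathcal{M} \cap \mathcal{N}_{w/2}(v)$ is contained in a single $w/2$-neighborhood, $N_I \leq (w+1)^2 \leq N$, and the right-hand side is at most $\exp(-2N^{2\epsilon})$.

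Next I would count the number of distinct members of $\mathcal{I}_{\mathcal{M}}$. As $v$ varies over $\mathbb{T}^2$, the intersection $\mathcal{M} \cap \mathcal{N}_{w/2}(v)$ is nonempty only when $v$ lies within $l_\infty$-distance $(m+w)/2$ of the center of $\mathcal{M}$, so $|\mathcal{I}_{\mathcal{M}}| \leq (m+w+1)^2$. In every setting where this lemma is applied in the paper, $m$ is at most polynomial in $N$, so this count is bounded by some polynomial $p(N)$. A union bound gives
\begin{align*}
\Pr\!\left(\exists\, I \in \mathcal{I}_{\mathcal{M}}: W_I - N_I/2 \geq N^{1/2+\epsilon}\right) \leq p(N)\,\exp(-2N^{2\epsilon}),
\end{align*}
and since $p(N) = \exp(O(\log N))$ is dwarfed by $\exp(2N^{2\epsilon})$ for any fixed $\epsilon>0$ and all sufficiently large $w$, we can absorb the prefactor by replacing $2$ with a smaller constant $c>0$ depending only on $\epsilon$, producing the stated bound $1 - \exp(-cN^{2\epsilon})$.

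There is no real obstacle in this argument; it is routine once the right concentration inequality is selected and $|\mathcal{I}_{\mathcal{M}}|$ is bounded. The only mild subtlety is tracking the hypothesis on $m$: absorbing the polynomial prefactor into the exponential requires $\log m = o(N^{2\epsilon})$, which is comfortably satisfied under the standing assumption $\log h = \omega(w^2)$ on the torus scale and for any $m$ that arises in applications of the lemma elsewhere in the paper. Note also that the bound is genuinely one-sided (for the excess of $\bar\theta$ over $\theta$), matching the definition of a good block of type $\theta$, so no symmetrization is needed.
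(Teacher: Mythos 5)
Your proof is correct. The paper itself states this lemma without proof, citing it as a restatement of Lemma~11 in \cite{omidvar2017self2}; your Hoeffding-plus-union-bound argument is the standard route one would expect there, and the one caveat you flag (that $\log|\mathcal{I}_{\mathcal{M}}|$ must be $o(N^{2\epsilon})$, which holds since $m$ is polynomial in $N$ in every application, e.g.\ $N$-blocks and $10w$-blocks) is handled appropriately.
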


We now want to review a result from percolation theory. Without loss of generality, we assume $G_w$ is defined on $\mathbb{Z}^2$ rather than on the finite torus $\mathbb{T}^2$.  Let us re-normalize (i.e., partition and rescale) $G_w$ into $m$-blocks starting from the $m$-block centered at the origin (see Figure~\ref{fig:goodbad}) and define the graph $G'_w := (\mathcal{V}',\mathcal{E}')$ where the set of vertices $\mathcal{V}'$ is the set of all $m$-blocks and the set of edges $\mathcal{E}'$ is the set of all pairs of $m$-blocks that are horizontally, vertically, or diagonally adjacent. By a \textit{bad node} in $G'_w$ we mean a node corresponding to a bad $m$-block. We define a path in $G'_w$ as an ordered set of bad nodes in this graph such that each pair of consecutive nodes are adjacent in $G'_w$ and no node appears more than once in the set.   Given any bad node $x$, we denote by $S(x)$ the set composed of all nodes for which there is a path to node $x$. 
The boundary of $S(x)$, denoted by $\partial S(x)$, is the set of nodes in $S(x)$ which are adjacent to some node not in $S(x)$.
The distance $\Delta(y,z)$ between any two nodes in $S(x)$  is the cardinality  of the shortest path  that connects them. 
Let $\partial S_k(x)$ be the set of nodes on the boundary of $S(x)$ that are at distance   $k$ from $x$ and $A_k(x)$ be the event that there exists a path of bad nodes joining $x$ to some node in $\partial S_k(x)$. 

The \textit{radius} of   $S(x)$ is the supremum of the distance from $x$ to the boundary, namely 
\begin{align*}
\sup \{\Delta(x,y): y \in \partial S(x)\},
\end{align*}
so that the event $A_k(x)$ indicates that the radius of $S(x)$ at $x$ is at least $k$.
Let $p$ denote the probability that an arbitrary $m$-block in the initial configuration in the above setting is a bad $m$-block.  It is noted that an $m$-block is a bad $m$-block independently of the others. Let $0<p_c<1$ denote the critical probability in the above percolation setting. The following result is Theorem 5.4 in \cite{grimmett1999percolation}.

\begin{theorem*}\label{Thrm:grimmett_bad_cluster} 
 \textsl{(Exponential tail decay of the radius of a cluster.)}
If $p<p_c$, then there exists $\psi(p) >0 $ such that   for all  $k$
\begin{align*} 
P_p\left(A_k(x)\right) < e^{-k\psi(p)}. \ \ \  
\end{align*}
\end{theorem*}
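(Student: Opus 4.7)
The statement is the classical subcritical sharpness theorem for independent Bernoulli site percolation on the renormalized graph $G'_w$, and my plan is to adapt the streamlined argument of Duminil-Copin and Tassion. Since each $m$-block is bad independently of the others, the configuration of bad blocks is a genuine Bernoulli site percolation on $G'_w$ (the king graph on $\mathbb{Z}^2$, with horizontal, vertical, and diagonal adjacency), so standard percolation machinery applies. I would introduce the quantity
\begin{align*}
\phi_p(S) \;=\; p \sum_{\substack{x \in S,\, y \notin S \\ x \sim y}} P_p\!\left(0 \leftrightarrow x \text{ inside } S\right),
\end{align*}
defined for any finite $S \ni 0$, together with the auxiliary critical parameter $\tilde p_c := \sup\{p : \exists \text{ finite } S \ni 0 \text{ with } \phi_p(S) < 1\}$. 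The theorem then reduces to two claims I would establish separately: (i) $p < \tilde p_c$ implies $P_p(A_k) \le e^{-\psi(p) k}$ for some $\psi(p) > 0$, and (ii) $\tilde p_c = p_c$.

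For (i), I would fix any $S$ with $\phi_p(S) < 1$, let $L$ be its diameter, and relate reaching distance $(k+1)L$ to reaching distance $kL$: conditioning on the first open site of the cluster exploration that lies on $\partial S$ and using independence of the sites outside $S$, a union bound identifies $\phi_p(S)$ as the expected number of open boundary exits, which yields $P_p(A_{(k+1)L}) \le \phi_p(S)\cdot P_p(A_{kL})$; iteration gives the exponential bound with rate $\psi(p) = -L^{-1}\log \phi_p(S)$. For (ii), the easy inclusion $\tilde p_c \le p_c$ is immediate because $\phi_p(S) \le p|\partial S|$ can be made smaller than $1$ whenever the origin's cluster is almost surely finite. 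The harder inclusion requires Russo's formula applied to $\theta_n(p) := P_p(A_n)$, which writes $\theta_n'(p)$ as a sum over pivotal sites; I would reorganize that sum by choosing, for each pivotal site, an appropriate cluster-exploration set and applying FKG to decouple the events on either side of the pivotal, obtaining the differential inequality
\begin{align*}
\theta_n'(p) \;\ge\; \frac{n}{p(1-p)\,\Sigma_p^n}\,\theta_n(p), \qquad \Sigma_p^n := \sum_{k=0}^n \theta_k(p),
\end{align*}
valid on $(\tilde p_c, 1)$. Using $\Sigma_p^n \le n$ and integrating this inequality from $\tilde p_c + \epsilon$ to $p$ forces $\theta(p) > 0$ throughout $(\tilde p_c, 1)$, whence $\tilde p_c \ge p_c$; sending $\epsilon \to 0$ completes the equality.

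The main obstacle, where essentially all of the ``sharpness'' content sits, is step (ii) and in particular the combinatorial identification between Russo's pivotal sum and the quantity $\phi_p$. One must choose the exploration set for each pivotal site so that the various contributions telescope into $\phi_p(S)$, and then use FKG (or a BK-style disjoint-occurrence bound) to handle the dependence between the two sides of the pivotal. Step (i), by contrast, is essentially an induction on concentric balls once the subadditive quantity $\phi_p$ has been isolated. One minor technicality is that $G'_w$ has king-type adjacency rather than nearest-neighbor adjacency on $\mathbb{Z}^2$, but the argument is insensitive to the precise local structure as long as the underlying graph is locally finite and vertex-transitive, both of which hold here.
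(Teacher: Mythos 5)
First, a point of context: the paper does not actually prove this statement — it is quoted verbatim as Theorem~5.4 of Grimmett's \emph{Percolation}, whose proof there is Menshikov's argument via differential inequalities for the expected number of pivotal sites. Your plan to reprove subcritical sharpness from scratch via the Duminil-Copin--Tassion quantity $\phi_p(S)$ is therefore a genuinely different (and much heavier) route than the paper's, though a legitimate one; you are also right that the only model-specific inputs are the independence of the block states and the transitivity and local finiteness of the king graph $G'_w$, and your step (i) — exploring the cluster inside $S$, conditioning on the explored set, and iterating over concentric copies of $S$ to get $P_p(A_{kL})\le\phi_p(S)^{k-1}$ — is sound in outline.

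Step (ii) as written, however, has a genuine gap. The inequality you state, $\theta_n'(p)\ge\frac{n}{p(1-p)\Sigma_p^n}\theta_n(p)$, combined with $\Sigma_p^n\le n$, only gives $(\log\theta_n)'\ge c$, hence $\theta_n(p)\ge\theta_n(\tilde p_c+\epsilon)\,e^{c(p-\tilde p_c-\epsilon)}$; since the prefactor $\theta_n(\tilde p_c+\epsilon)$ may tend to $0$ as $n\to\infty$, this does \emph{not} force $\theta(p)>0$. What actually closes the Duminil-Copin--Tassion argument is the inequality $\theta_n'(p)\ge\frac{1}{p(1-p)}\bigl(\inf_{S\subset\Lambda_n,\,0\in S}\phi_p(S)\bigr)\bigl(1-\theta_n(p)\bigr)$ together with the observation that $\inf_S\phi_p(S)\ge 1$ for every $p>\tilde p_c$ by the very definition of $\tilde p_c$; integrating $\theta_n'/(1-\theta_n)\ge c$ then yields $\theta_n(p)\ge 1-e^{-c(p-\tilde p_c)}$ \emph{uniformly in} $n$, which is what gives $\theta(p)>0$. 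Relatedly, your justification of the ``easy inclusion'' $\tilde p_c\le p_c$ is off: almost-sure finiteness of the cluster does not make $\phi_p(S)$ small (that is precisely the content of sharpness); the inclusion instead follows from your step (i), since exponential decay below $\tilde p_c$ forces $\theta=0$ there. With these two repairs the outline matches the standard modern proof and would be correct — or you could simply cite the result, as the paper does.
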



\begin{definition}[Firewall]
A \textit{firewall} of radius $r$ and center $u$ is a monochromatic annulus
\begin{align*}
A_r(u) = \left\{y: r-\sqrt{2}w \leq \|u-y\|_2 \leq r\right\},
\end{align*}
    where $\|.\|_2$ denotes the Euclidean distance and $r\ge 3w$.
\end{definition}

Consider a disc of radius $r$,   centered at a particle such that all the particles inside the disc are in the same state. 
Lemma 6 in \cite{immorlica2015exponential} shows that if $r>w^3$,  $\tau \in (\tau^*,1/2)$,  and $w$ is sufficiently large, then all the particles inside the disc will remain a.s.\ stable regardless of the configuration of the particles outside the disc. Here we state a similar lemma but for a firewall, without proof.


\begin{lemma*}[\cite{immorlica2015exponential}]  \label{Lemma:firewall}
Let $A_r(u)$ be the set of particles contained in an annulus of outer radius $r \ge w^3$ and of width $\sqrt{2} w$ centered at $u$. For all $\tau \in (\tau^*,1/2)$ and for a sufficiently large constant $w$, if $A_r(u)$ is monochromatic at time $\ti$,  then it will a.s.\ remain monochromatic at all times  $\ti'>\ti$. 
\end{lemma*}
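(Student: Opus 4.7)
The plan is to reduce the lemma to a single purely geometric estimate, and then close the argument by a short induction over flipping times. Specifically, I will show that for every $v \in A_r(u)$,
\begin{equation*}
\frac{|\mathcal{N}(v) \cap A_r(u)|}{N} \;\ge\; \frac{1}{2} \;-\; O\!\left(\frac{w^2}{r}\right).
\end{equation*}
Because $r \ge w^3$, this ratio is $\tfrac12 - O(1/w)$ and hence strictly exceeds any fixed $\tau < 1/2$ once $w$ is large enough. Combined with monochromaticity of $A_r(u)$, this immediately yields $r_{\ti'}(v) \ge \tau$ at every time $\ti' \ge \ti$ of monochromaticity, so every $v \in A_r(u)$ is stable, hence not p-stable, hence (by Definition~\ref{Def:FlippingTime}) unable to flip. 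Iterating over the a.s.\ countable sequence of flipping times then shows that $A_r(u)$ stays monochromatic a.s.

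I would prove the geometric estimate in two steps. \emph{Reduction to a straight strip.} Since $\mathcal{N}(v)$ has Euclidean diameter $2\sqrt 2\,w$ and both bounding circles of $A_r(u)$ have curvature at most $2/r$, each circle lies within $O(w^2/r)$ lattice units of its tangent line at the point of the circle closest to $v$, uniformly over $\mathcal{N}(v)$. Hence, up to this vanishing error, $\mathcal{N}(v)\cap A_r(u)$ coincides with $\mathcal{N}(v)\cap S$, where $S$ is the Euclidean strip of width $\sqrt 2\,w$ perpendicular to the radial direction $\hat r = (v-u)/\|v-u\|_2$ that contains $v$. \emph{Straight-strip bound.} For any $y \in \mathcal{N}(v)$, Cauchy--Schwarz gives $|(y-v)\cdot\hat r| \le \|y-v\|_2 \le \sqrt 2\,w$, so inside $\mathcal{N}(v)$ one of the two bounding half-planes of $S$ is redundant and the strip constraint reduces to a single half-space cut $\{(y-v)\cdot\hat r \le 0\}$ (or $\{\ge 0\}$) through a line passing through $v$. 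The reflection $y-v \mapsto -(y-v)$ about $v$ pairs lattice points of $\mathcal{N}(v)$ on opposite sides of this line, so the count on the annulus side equals $\tfrac12\bigl((2w+1)^2 + L(\hat r)\bigr) \ge \tfrac12\bigl((2w+1)^2 + 1\bigr)$, where $L(\hat r)\ge 1$ is the number of lattice points of $\mathcal{N}(v)$ lying on the cutting line. Dividing by $N$ and combining with the curvature error yields the displayed estimate.

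The main obstacle is the calibration of the annulus width: the value $\sqrt 2\,w$ is chosen precisely because it is the $l_2$-diameter of a quadrant of $\mathcal{N}(v)$, which is exactly what makes the inner strip boundary redundant at \emph{every} orientation of $\hat r$ (including the worst, diagonal, orientation). Any smaller width would fail there. Once this calibration is in place, the remaining work is only to check that the curvature residual $O(w^2/r) = O(1/w)$ stays below the gap $1/2-\tau$, which is precisely what forces the ``sufficiently large $w$'' hypothesis in the statement.
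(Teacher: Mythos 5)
The paper itself states this lemma without proof, deferring to Lemma~6 of Immorlica et al., so there is no internal proof to compare against; your overall architecture --- the width $\sqrt{2}\,w$ guarantees that every particle of $A_r(u)$ keeps at least a $\tau$-fraction of same-state neighbors inside the annulus while it is monochromatic, hence is stable, hence never p-stable, hence never flips, and one closes by ruling out a first flip --- is exactly the intended argument and is sound in outline.

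There is, however, a concrete error in your straight-strip step. The claim that ``inside $\mathcal{N}(v)$ one of the two bounding half-planes of $S$ is redundant'' holds only when $v$ lies on one of the two boundary circles. Write the strip as $-d_1 \le (y-v)\cdot\hat r \le d_2$ with $d_1+d_2=\sqrt{2}\,w$; redundancy of the lower (resp.\ upper) constraint requires $d_1 \ge \sqrt{2}\,w$ (resp.\ $d_2 \ge \sqrt{2}\,w$), which forces the other offset to vanish. For $v$ in the interior --- say equidistant from both circles with $\hat r$ diagonal --- the two opposite corners of $\mathcal{N}(v)$ along $\pm\hat r$ each violate one of the two constraints, so both are active and the reflection-through-$v$ pairing does not apply. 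The conclusion you need is still true, and in fact the fraction is \emph{strictly larger} than $1/2$ for interior $v$ (it equals $3/4$ at the midline in the diagonal orientation), but it must be proved differently: project $\mathcal{N}(v)$ onto $\hat r$; the resulting mass distribution is symmetric and unimodal about $0$ with support $[-M,M]$, $M = w\left(|\cos\theta|+|\sin\theta|\right) \le \sqrt{2}\,w$, and any interval of length $\sqrt{2}\,w \ge M$ containing $0$ captures at least half the mass of such a distribution (compare $\int_{d_2}^{M}$ with $\int_{0}^{M-d_2} \subseteq \int_0^{d_1}$ using monotonicity on $[0,M]$). Your calibration remark is right in spirit --- $\sqrt 2\,w$ is the minimal width that works at the diagonal orientation --- but the mechanism is ``the strip is at least as wide as the half-support of the projection,'' not ``one half-plane is redundant.'' With this repair, and with the $O(w)$ lattice-versus-area discrepancy folded into the error term alongside the $O(w^3/r)=O(1)$ curvature correction, the estimate $|\mathcal{N}(v)\cap A_r(u)|/N \ge 1/2 - O(1/w) > \tau$ holds and the rest of your argument goes through.
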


 By Lemma~\ref{Lemma:firewall}, once formed a firewall of sufficiently large radius  remains  static,  and since its width is  $\sqrt{2} w$ the particles inside the inner circle are not going to be affected by the configurations outside the firewall.

\

We now review some of the definitions from \cite{omidvar2017self2}. In that paper, the goal is to identify a configuration that can trigger a cascading process leading to the formation of monochromatic balls w.h.p. 

\begin{definition}[Radical Region]
For any $\epsilon, \epsilon' \in {(0,1/2)}$ let
\begin{align*}
\hat{\tau} = \tau (1- 1/ (\tau {\size}^{1/2-\epsilon})),
\end{align*}
and define  a
\textit{radical region}  to be a neighborhood 
of radius ${(1+\epsilon')w}$ containing  less than $\hat{\tau}(1+\epsilon')^2{\size} $  particles in state $\theta$. 
\end{definition}


\begin{definition}[p-Stable Region] For any $\epsilon, \epsilon' \in {(0,1/2)}$, we define
a \textit{p-stable region} to be a neighborhood of radius $\epsilon' w$, containing at least $\lfloor \tau \epsilon'^2 {\size}-{\size}^{1/2+\epsilon} \rfloor$ p-stable particles in state $\theta$. 
\end{definition}

The following lemma easily follows from Lemma 4 in \cite{omidvar2017self2} and is given without proof.
\begin{lemma*} \label{Lemma:unstable_region}
A radical region $\mathcal{N}_{(1+\epsilon')w}$ in the initial configuration contains a p-stable region $\mathcal{N}_{\epsilon'w}$ at its center w.h.p.
\end{lemma*}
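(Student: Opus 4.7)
The plan is to work with $R := \mathcal{N}_{(1+\epsilon')w}$, its central sub-ball $C := \mathcal{N}_{\epsilon' w}$, and the count $T$ of $\theta$-particles in $R$; by hypothesis $T < \hat{\tau}(1+\epsilon')^2 \size$. Two claims need to be established: (i) the number $X$ of $\theta$-particles inside $C$ is at least $\tau \epsilon'^2 \size - \size^{1/2+\epsilon}$ w.h.p., and (ii) every $\theta$-particle in $C$ sees strictly fewer than $\tau \size$ $\theta$-neighbors in $\mathcal{N}(u) \subset R$ w.h.p., which makes it unstable and hence (since $\tau<1/2$) p-stable. Combining (i) and (ii) produces at least $\lfloor \tau \epsilon'^2 \size - \size^{1/2+\epsilon} \rfloor$ p-stable $\theta$-particles in $C$, which is the definition of a p-stable region of type $\theta$ centered at the same point.

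For (i), the key observation is that since the Bernoulli$(1/2)$ unconditional mass function $P(T=k)$ is strictly increasing for $k$ below the mean $(1+\epsilon')^2 \size/2$, conditioning on the large-deviation event $\{T < \hat{\tau}(1+\epsilon')^2 \size\}$ pins the conditional law of $T$ geometrically close to the upper boundary $\lceil \hat{\tau}(1+\epsilon')^2 \size \rceil - 1$: the ratio $\binom{n}{k_0-1}/\binom{n}{k_0}=k_0/(n-k_0+1)$ is bounded away from $1$ for $k_0 \leq \hat{\tau}(1+\epsilon')^2 \size$, so $T \geq \hat{\tau}(1+\epsilon')^2 \size - O(\log \size)$ w.h.p. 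Conditional on the exact value of $T$, the $\theta$-particles are uniformly distributed among the positions of $R$, so $X$ is hypergeometric with mean $T\epsilon'^2/(1+\epsilon')^2$, and Hoeffding's inequality for hypergeometric variables gives $X \geq \mathbb{E}[X \mid T] - \size^{1/2+\epsilon/2}$ with probability $1 - \exp(-c\size^{2\epsilon}) = 1 - o(w^{-2})$. For $w$ large enough the combined lower bound is $\tau\epsilon'^2 \size - \epsilon'^2 \size^{1/2+\epsilon} - O(\size^{1/2+\epsilon/2}) \geq \tau \epsilon'^2 \size - \size^{1/2+\epsilon}$, since $\epsilon'^2 < 1$.

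For (ii), fix $u \in C$ and note $\mathcal{N}(u) \subset R$ because $u$ is within $l_\infty$-distance $\epsilon' w$ of the center of $R$. Conditionally on $T$, the count $Y_u$ of $\theta$-particles in $\mathcal{N}(u)$ is hypergeometric with mean $T/(1+\epsilon')^2 \leq \hat{\tau} \size + O(\log \size) = \tau\size - \size^{1/2+\epsilon} + O(\log \size)$. Hoeffding again gives $Y_u \leq \mathbb{E}[Y_u \mid T] + \size^{1/2+\epsilon/2}$ with failure probability $\exp(-c\size^{2\epsilon})$, and a union bound over the $|C| = O(\size)$ candidate nodes in $C$ preserves a total failure probability of $\size \exp(-c\size^{2\epsilon}) = o(w^{-2})$. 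Hence $Y_u < \tau \size$ simultaneously for every $u \in C$ w.h.p., completing the argument. The main technical hurdle is calibrating three error scales---the $O(\log \size)$ slack from the conditional concentration of $T$, the Hoeffding deviation $\size^{1/2+\epsilon/2}$, and the $\epsilon'^2 \size^{1/2+\epsilon}$ shortfall produced by replacing $\tau$ with $\hat{\tau}$---so that all of them comfortably fit inside the $\size^{1/2+\epsilon}$ tolerance of the p-stable region; this is automatic because of the gap between the exponents $\epsilon/2$ and $\epsilon$, and because the sub-$1$ factor $\epsilon'^2$ absorbs the leading term.
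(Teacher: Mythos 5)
Your proof is correct and follows the same route as the argument the paper defers to (Lemma~4 of the cited reference): condition on the radical-region count $T$, use the geometric decay of the conditional binomial law to pin $T$ near its upper boundary, and then apply hypergeometric concentration to the sub-counts in $\mathcal{N}_{\epsilon'w}$ and in each $\mathcal{N}(u)$, with the $\hat{\tau}$-versus-$\tau$ slack $\size^{1/2+\epsilon}$ absorbing all error terms. The only blemish is a harmless bookkeeping slip: a deviation of $\size^{1/2+\epsilon/2}$ in a sample of size $O(\size)$ yields a Hoeffding exponent of order $\size^{\epsilon}$ rather than $\size^{2\epsilon}$, which is still amply $o(w^{-2})$.
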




Now consider a geometric configuration 
where a radical region,  and  neighborhoods $\mathcal{N}_{\epsilon'w}$ , $\mathcal{N}_{w/2}$ and $\mathcal{N}_{{\radius}}$  with ${\radius}>3w$, are all co-centered. 
Let 
\begin{align} 
\mathcal{T}({\radius}) = \inf\{\ti: \exists v\in \mathcal{N}_{\radius}, \;  v \mbox{ is a $\bar{\theta}$-affected node at time } \ti \}.
\label{Tinfdef}
\end{align}


\begin{figure}[!t]
\centering
\includegraphics[width=4.5in]{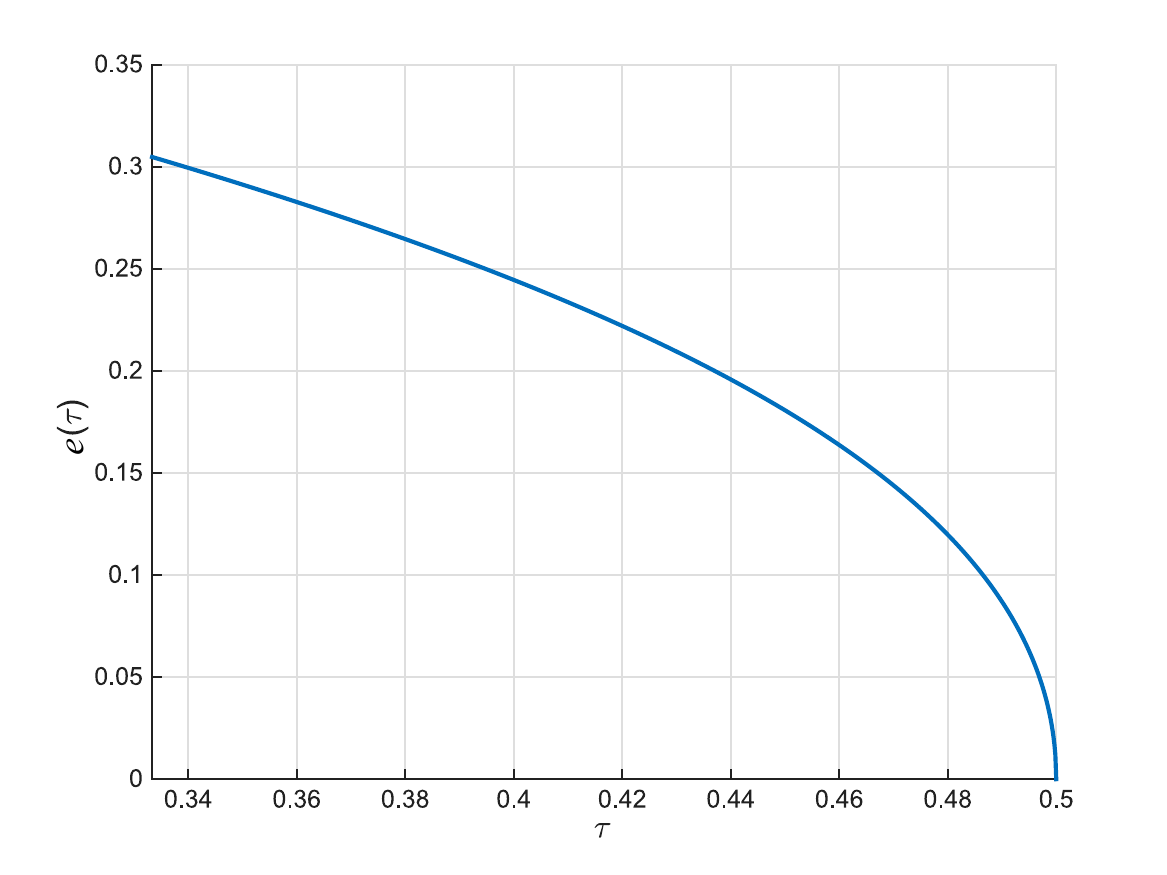}
\caption{The function $e(\tau)$ gives the infimum of $\epsilon'$ to potentially trigger a cascading process.}
\label{fig:epsilon} 
\end{figure}

\begin{definition}[Expandable Radical Region] \label{Def:exradreg}
A radical region is called an \emph{expandable} radical region (of type $\theta$) if there is a possible sequence of at most $(w+1)^2$ flips inside it that can make  the neighborhood $\mathcal{N}_{w/2}$ at its center monochromatic with particles in state $\bar{\theta}$. 
\end{definition}
The next lemma, which is a restatement of Lemma~5 in \cite{omidvar2017self2},  shows that a radical region in this configuration is an expandable radical region of type $\theta$ w.h.p., provided that  $\epsilon'$ is large enough and there is no $\bar{\theta}$-affected node in $\mathcal{N}_{\radius}$.  The main idea is that the $\bar{\theta}$ particles in the p-stable region at the center of the radical region can  trigger a process that leads to  monochromatic balls of radius $w$.

\begin{lemma*} \label{Lemma:Trigger}
For all $\epsilon' > e(\tau)$
there exists w.h.p. a sequence of at most $(w+1)^2$ possible flips  in a radical region $\mathcal{N}_{(1+\epsilon')w}$ such that if they  happen  before $\mathcal{T}({\radius})$,  then all the particles inside   $\mathcal{N}_{w/2}$ will have the same state.
\end{lemma*}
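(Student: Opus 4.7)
My plan is to build the required sequence of flips constructively, by peeling $\bar{\theta}$ shells outward from the centre of the radical region. By Lemma~\ref{Lemma:unstable_region} we may take for granted that the radical region contains a p-stable sub-region $\mathcal{N}_{\epsilon' w}$ at its centre, holding at least $\lfloor \tau \epsilon'^2 \size - \size^{1/2+\epsilon} \rfloor$ p-stable $\theta$-particles. As a seed round, flip all of these to $\bar{\theta}$; the flips are permissible because, by hypothesis, $\mathcal{T}(\radius)$ has not yet occurred, so no outside influence has altered the labels inside $\mathcal{N}_{\radius}$ other than our own deliberate flips.

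\textbf{Iterative cascade.} After the seed round, I proceed shell by shell: for $k = 1, 2, \ldots$, I examine the $\theta$-particles lying in $\mathcal{N}_{\epsilon' w + k}\setminus \mathcal{N}_{\epsilon' w + k -1}$, certify that each such $u$ is now p-stable, flip it, and continue until $\mathcal{N}_{w/2}$ contains no $\theta$-particle. To verify p-stability, I decompose $\mathcal{N}(u)$ into the portion $A$ that is already contained in the converted central region and the portion $B=\mathcal{N}(u)\setminus A$. Every particle in $A$ is $\bar{\theta}$ by construction; on $B$, Lemma~\ref{Lemma:goodblock} applied to sub-neighborhoods of $\mathcal{N}_{(1+\epsilon')w}$ guarantees w.h.p.\ that the $\theta$-count in $B$ deviates from $|B|/2$ by at most $O(\size^{1/2+\epsilon})$. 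Combining this with the radical-region bound, which says the total number of $\theta$-particles in $\mathcal{N}_{(1+\epsilon')w}$ is less than $\hat{\tau}(1+\epsilon')^2 \size$, I obtain $r_{\ti}(u) < \tau$, i.e.\ $u$ is p-stable. Since every flip removes exactly one $\theta$-particle from $\mathcal{N}_{w/2}$, the total number of flips is at most $|\mathcal{N}_{w/2}| \leq (w+1)^2$, matching the bound in the statement.

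\textbf{Where $e(\tau)$ appears; main obstacle.} The quantitative content is that $r_{\ti}(u)<\tau$ must hold for \emph{every} $u\in\mathcal{N}_{w/2}$ that is still $\theta$, at every stage of the cascade. After elementary $l_\infty$-box overlap estimates for $|A|$ and $|B|$ as functions of $\epsilon'$ and of the current shell index $k$, this reduces to an inequality of the form $f(\tau)(1+\epsilon')^2 \geq g(\tau)$; the discriminant of the associated quadratic recovers exactly formula~(\ref{eq:ftau}), and the strict condition $\epsilon'>e(\tau)$ provides the slack needed to absorb the $O(\size^{1/2+\epsilon})$ fluctuation terms from Lemma~\ref{Lemma:goodblock}. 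The main obstacle is controlling the worst-case position of $u$ near the outer boundary of $\mathcal{N}_{w/2}$, where $\mathcal{N}(u)$ extends well past the radical region: here the $\theta$-deficit inside the radical region must still dominate the contribution of $\theta$-particles coming from the portion of $\mathcal{N}(u)$ outside $\mathcal{N}_{(1+\epsilon')w}$. It is precisely this uniformity requirement that forces a uniform good-block bound over all sub-neighborhoods, and that selects $e(\tau)$ rather than any smaller threshold as the critical value of $\epsilon'$.
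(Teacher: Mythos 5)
First, a caveat on the comparison: this paper does not actually prove the lemma — it is imported verbatim from \cite{omidvar2017self2} (Lemma~5 there) — so your proposal can only be judged on its own merits. Your architecture (seed the cascade with the p-stable region from Lemma~\ref{Lemma:unstable_region}, grow outward, worst case at the outer boundary of $\mathcal{N}_{w/2}$) is the right shape, but the step where you certify p-stability has a genuine gap. You apply Lemma~\ref{Lemma:goodblock} to sub-neighborhoods \emph{inside} the radical region to conclude that the portion $B$ of $\mathcal{N}(u)$ not yet converted has $\theta$-count $\approx |B|/2$. That lemma is an unconditional statement about the product initial measure; here the configuration inside $\mathcal{N}_{(1+\epsilon')w}$ is conditioned on the exponentially rare event of radicality, under which sub-regions are \emph{not} balanced at density $1/2$ (their conditional $\theta$-density concentrates near $\hat{\tau}<1/2$ — indeed density $1/2$ everywhere would contradict radicality). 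Moreover, the combination of bounds you describe cannot produce $e(\tau)$: if one uses balance both on $B\cap\mathcal{N}_{(1+\epsilon')w}$ and on $\mathcal{N}_{(1+\epsilon')w}\setminus\mathcal{N}(u)$ together with the total count $<\hat{\tau}(1+\epsilon')^2\size$, the p-stability condition collapses after cancellation to $\epsilon'\bigl(2\tau-1-\epsilon'/2\bigr)<0$, true for every $\epsilon'>0$, so no threshold appears; if instead one uses only ``$A$ all $\bar{\theta}$, $B$ balanced at $1/2$,'' one gets roughly $\epsilon'^2>1-2\tau$, a threshold strictly larger than $e(\tau)$ throughout the relevant range. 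So the assertion that ``the discriminant of the associated quadratic recovers exactly formula~(\ref{eq:ftau})'' is not supported by the decomposition you set up.

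What actually produces $e(\tau)$ is a \emph{conditional} equidistribution statement: w.h.p., given radicality, every sub-square of the form $\mathcal{N}(u)\cap\mathcal{N}_{(1+\epsilon')w}$ has $\theta$-density at most $\hat{\tau}$. Then, for the worst-placed particle $u=(w/2,w/2)$ at the corner of $\mathcal{N}_{w/2}$ and $\epsilon'\le 1/2$ (so that all $\tau\epsilon'^2\size$ seed flips land in $\mathcal{N}(u)$), one bounds $\theta(\mathcal{N}(u))\le \hat{\tau}\,|\mathcal{N}(u)\cap R| - \tau\epsilon'^2\size + \tfrac12|\mathcal{N}(u)\setminus R|$ with $R=\mathcal{N}_{(1+\epsilon')w}$ and $|\mathcal{N}(u)\setminus R|=\bigl(4-(\tfrac32+\epsilon')^2\bigr)w^2$; the requirement $\theta(\mathcal{N}(u))<\tau\size$ then reduces exactly to $4\tau\epsilon'^2>(\tfrac12-\tau)\bigl(4-(\tfrac32+\epsilon')^2\bigr)$, i.e.\ to $(3\tau+\tfrac12)\epsilon'^2-3(\tau-\tfrac12)\epsilon'+\tfrac74(\tau-\tfrac12)>0$, whose positive root is $e(\tau)$. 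Your proposal is missing both the conditional density bound and this accounting, so the cascade step is not justified and would not yield the stated threshold. Two smaller points: after the seed round $\mathcal{N}_{\epsilon'w}$ is not yet monochromatic (only its p-stable particles flipped), so the interior must be finished before starting the shells; and the budget of $(w+1)^2$ flips is respected only because every flip is of a $\theta$-particle lying in $\mathcal{N}_{w/2}$, which should be stated explicitly.
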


The function $e(\tau)$ is plotted in Figure~\ref{fig:epsilon}.
The following lemma, which is Lemma~20 in \cite{omidvar2017self2},  gives a lower bound and an upper bound for the probability that an arbitrary neighborhood of the size of a radical region is indeed a radical region in the initial configuration.

\begin{lemma*}[\cite{omidvar2017self2}] \label{Lemma:radical_region_prob}
Let $\epsilon'$ and $\epsilon$ be positive constants. In the initial configuration, an arbitrary neighborhood with radius $(1+\epsilon')w$ is a radical region with probability $p_{\epsilon'}$, where
\begin{align*}
{2^{-[1-H(\tau'')](1+\epsilon')^2{\size}-o({\size})} \le p_{\epsilon'} \le  2^{-[1-H(\tau'')](1+\epsilon')^2{\size}+o({\size})}},
\end{align*}
and $\tau'' = (\lfloor \hat{\tau}(1+\epsilon')^2{\size} \rfloor - 1) / (1+\epsilon')^2{\size} $, $\hat{\tau} = (1-{1}/{(\tau {\size}^{1/2-\epsilon})})\tau$, and $H$ is the binary entropy function.
\end{lemma*}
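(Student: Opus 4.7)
The plan is to recognize $p_{\epsilon'}$ as a $\mathrm{Binomial}(M,1/2)$ left-tail probability, where $M:=(2(1+\epsilon')w+1)^2$ is the exact number of nodes in a neighborhood of radius $(1+\epsilon')w$, and then apply exactly the same Stirling-based estimate already used in the proof of Lemma~\ref{Lemma:unstableprob}. Because the initial states are i.i.d.\ Bernoulli$(1/2)$, the number of $\theta$-particles in such a neighborhood has distribution $\mathrm{Binomial}(M,1/2)$, and one can write
\begin{align*}
p_{\epsilon'} \;=\; 2^{-M}\sum_{k=0}^{K}\binom{M}{k}, \qquad K\;:=\;\lfloor \hat\tau(1+\epsilon')^2 N\rfloor-1.
\end{align*}

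The key observation is that $M=(1+\epsilon')^2 N + O(w) = (1+\epsilon')^2 N\,(1+o(1))$, and since $\tau<1/2$ one also has $\hat\tau<1/2$ for sufficiently large $w$, so $K/M<1/2$. I would then invoke the elementary tail sandwich
\begin{align*}
\binom{M}{K} \;\le\; \sum_{k=0}^{K}\binom{M}{k} \;\le\; \frac{1-K/M}{1-2K/M}\,\binom{M}{K},
\end{align*}
whose right-hand prefactor converges to the constant $(1-\hat\tau)/(1-2\hat\tau)$ and is therefore inconsequential at the $o(N)$ scale in the exponent. Applying Stirling's formula to $\binom{M}{K}$ gives the estimate $\binom{M}{K}=2^{MH(K/M)+O(\log N)}$. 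The definitions of $K$ and $\tau''$ together with the asymptotic $M=(1+\epsilon')^2 N(1+o(1))$ imply that $K/M=\tau''(1+o(1))$, and continuity of $H$ then yields $M\,H(K/M)=(1+\epsilon')^2 N\,H(\tau'')+o(N)$. Combining the pieces produces the two-sided exponent $-(1-H(\tau''))(1+\epsilon')^2 N\pm o(N)$, which is exactly the claimed bound on $p_{\epsilon'}$.

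The only point requiring a bit of care is the bookkeeping of lower-order error terms: the discrepancy between $M$ and $(1+\epsilon')^2 N$, the floor inside the definition of $K$, and the gap between $K/M$ and $\tau''$ each introduce corrections that must be shown to be absorbed into the $o(N)$ in the exponent. I do not foresee any genuine obstacle---the argument is a direct parallel of the computation carried out explicitly in the proof of Lemma~\ref{Lemma:unstableprob}, with $(1+\epsilon')^2 N$ in place of $N$ and $\hat\tau$ in place of $\tau$, so the entire proof reduces to invoking that computation after the binomial re-writing above.
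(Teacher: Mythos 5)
Your argument is correct: a radical region is exactly the event that a $\mathrm{Binomial}(M,1/2)$ count falls below the stated threshold, and the sandwich $\binom{M}{K}\le\sum_{k\le K}\binom{M}{k}\le\frac{1-K/M}{1-2K/M}\binom{M}{K}$ followed by Stirling, with the $O(\sqrt{N})$ discrepancy between $M$ and $(1+\epsilon')^2N$ and the floor/off-by-one in $K$ absorbed into the $o(N)$ exponent, gives precisely the claimed two-sided bound. This is the same computation the paper carries out for Lemma~\ref{Lemma:unstableprob}; the paper itself does not reprove the present lemma but imports it from \cite{omidvar2017self2}, and your proof is the standard (and evidently intended) argument.
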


\section{The Concentration Bound} \label{Sec:Concentration}
\
The concentration bound developed in this section is a main step required for proving our main results and it resembles the one  developed in the context of FPP \cite{kesten1993speed}.

\subsection{First Passage Percolation} 
FPP was originally developed by Hammersley and Welsh~\cite{hammersley1965first} in 1965 and then further developed especially by Kesten in 1986 \cite{kesten1980critical, kesten1986aspects, kesten1993speed}. It is a mathematical model  to describe the flow of a fluid through a random medium.   A non-negative random variable $t_i$, defined as the passage time of an edge $e_i$, is placed at each nearest-neighbor edge of the grid defined on $\mathbb{Z}^d$. The collection $\{t_i\}$ is assumed to be independent, identically distributed with a common distribution $F$. The random variable $t_i$ is interpreted as the time or the cost needed to traverse edge $e_i$. The basic problem is to asymptotically describe the set of nodes that have been visited by a given time, when a fluid starts traversing the edges from the node at the origin.   FPP on the nodes of the lattice can also be defined along the same lines, when passage times are assigned to nodes instead of  edges. 
\

The beauty of Kesten's approach in developing a concentration bound for FPP, is that he uses a martingale representation for the difference between the first passage time of any node and its mean that is independent of the underlying geometrical structure of the process. We  exploit  such a property in our proof.

\subsection{Modified Model}
In this section, we work with a  modified  version of the model introduced in Section~\ref{Sec:Model}. We prove some results for this new model, and then use them to prove Theorem~\ref{Thrm:Shape_theorem}   by coupling the modified model with  our original model.  
The modified model is defined in a way that the spreading of the affected nodes    resembles a FPP model.
\begin{figure}[!t]
\centering
\includegraphics[width=2.5in]{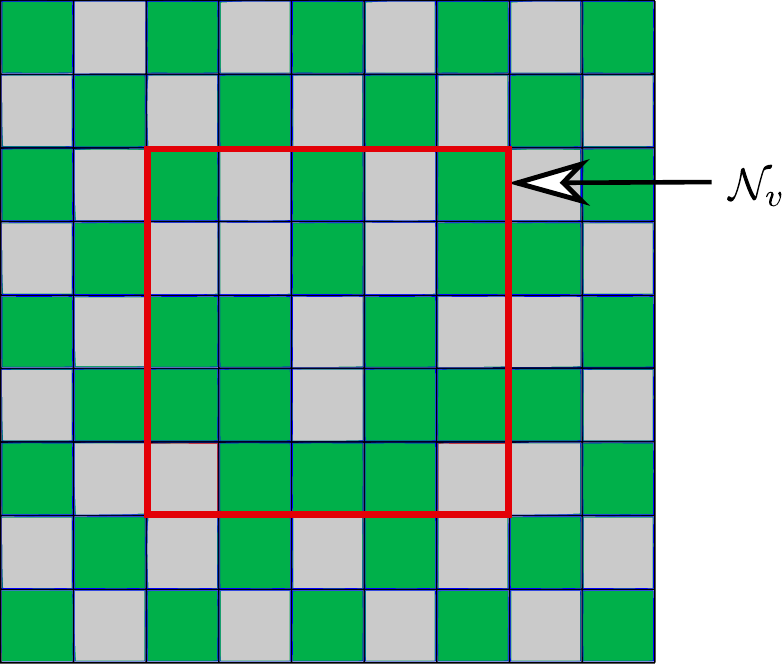}
\caption{The function $e(\tau)$ gives the infimum of $\epsilon'$ to potentially trigger a cascading process.}
\label{fig:Nv} 
\end{figure}
The model differs from the original one in the following points:

\begin{itemize}
\item We consider $G_w$ to be constructed on an infinite integer lattice $\mathbb{Z}^2$. Let $c_2,c'_2, \tau'$ be as defined in Section~\ref{Subsec:mainresults}. We assume that the states of particles that are located outside a neighborhood $\mathcal{N}_v$ with radius $v=2^{c'_2\size}$ centered at the origin, are frozen at the initial configuration for the entire duration of the process.  

\item  
We consider the product space for the initial states of the particles and we denote by $V$ the event that in the initial configuration there are no affected nodes in $\mathcal{N}_v$ and this neighborhood is a region of expansion. Using Lemma~\ref{Lemma:R_unhappy} and Lemma~\ref{Lemma:monoch_spread_1}, we have that $V$ occurs w.h.p. It follows that without loss of generality, to prove our asymptotic results  we can assume that in the modified model all   events  in the initial configuration  are conditional on $V$.
 
\item We assign a single flipping time to every particle on $G_w$ in the initial configuration and  
%
we label some nodes as  \textit{affected*}, as indicated below. 
In our modified model $\theta$-particles placed on  affected* nodes always flip, once their flipping time expires, regardless of them being p-stable or not. On the other hand, particles on nodes that are not affected*, upon becoming p-stable they  wait    for an amount of time equal to their flipping time and then make a flip. The first two bullets above   ensure that   if a particle becomes p-stable it   remains p-stable until it flips, and that once flipped any particle remains stable forever.

\item We define an \textit{affected* block} to be  a $w$-block such that all the nodes on it are affected* and we place such a block at a given location on $\mathcal{N}_v$. Namely, in the initial configuration we 
label the nodes located in this block as affected*. 

\end{itemize}


\subsection{Additional Definitions for the Modified Model}
 We   now define  notions of paths and passage times similar to FPP. 
\begin{definition}[Path and First Passage Time for the Modified Model]
Let $t_i$ be the flipping  time  of the particle located at $v_i$. Let $\mathbf{0^*}$ denote an affected* block placed at the origin, and $u$ an arbitrary node on $\mathcal{N}_v$. 
 Let a path $r$ be a set of particles $v_1,v_2,\ldots,v_k$  such that they become p-stable in a sequence, i.e., $v_j$ becomes p-stable after $v_{j-1}$, and let
\begin{align*} 
T(r) = \sum_{i=1}^{k} t_i. 
\end{align*}
Then, we define the \textit{first passage time} for the modified model from the origin to $u$ as
\begin{align*}
a_{0,u} := T(\mathbf{0^*},u) :=  \inf_{r\in \mathcal{P}} \{T(r) \},
\end{align*}
where $\mathcal{P}$ is the set of possible sequences of flips when we place an affected* block at the origin that will lead to an affected node at $u$. 
When we assume that in the initial configuration the entire $\mathcal{N}_v$ is a region of expansion and there are no affected nodes of any type on $\mathcal{N}_v$, $\mathcal{P}$ will be non-empty. From the definition of region of expansion, since the affected* block will lead to the formation of a monochromatic block, there will always be a possible subsequent flip until the desired node becomes affected.
Clearly, the notion of first passage time can be extended to any pair of nodes on a neighborhood $\mathcal{N}_v$ denoted by $a_{x,u}$, in which case, we assume that the affected* block is placed at $x$ instead of the origin. 
We note that with the given conditions on the initial configuration   and if we place an affected* block  at $x$, with a similar argument as above there always exists a set of flips leading to an affected node at any location on $\mathcal{N}_v$.

%
\end{definition}

\subsection{The Concentration Bound}

\begin{theorem*}\label{Thrm:Theorem_Concentration}
Let $c>0$ be a  constant such that $c\le c_2$ where $c_2$ is as defined in Section~\ref{Subsec:mainresults}, and let $u$ be any node on $G_w$ whose $l_\infty$-distance from the origin is at least $2^{c\size}$.
There exists a constant $c'>0$ (independent of $\size$) such that, for $\size$ sufficiently large and for all $\lambda\le \|u\|$,  we have
\begin{align} \label{eq:1.15} 
P\left\{\left|a_{0,u}-\mathbb{E}[a_{0,u}] \right| \ge \lambda    \right\} \le e^{-c'\lambda/\sqrt{\|u\|\log \|u\|}}. 
\end{align}
\end{theorem*}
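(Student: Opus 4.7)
The strategy is to adapt Kesten's concentration argument for first passage percolation~\cite{kesten1993speed} by decomposing $a_{0,u} - \mathbb{E}[a_{0,u}]$ as a Doob martingale in the flipping times $t_i$ and applying a Bernstein-type martingale inequality (Theorem~\ref{Thrm:Kesten_Azuma}), exactly as suggested in the proof-outline section. The two quantities to be controlled are a stretched-exponential tail on each martingale increment and a linear-in-$\|u\|$ bound on the sum of conditional second moments.

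\textbf{Step 1 (martingale decomposition).} Enumerate the particles of $\mathcal{N}_v$ in an arbitrary fixed order $v_1,\ldots,v_M$ and set $\mathcal{F}_k = \sigma(t_1,\ldots,t_k)$. Define the Doob martingale differences
$$M_k \;=\; \mathbb{E}[a_{0,u}\mid \mathcal{F}_k] - \mathbb{E}[a_{0,u}\mid \mathcal{F}_{k-1}],$$
so that $a_{0,u} - \mathbb{E}[a_{0,u}] = \sum_k M_k$. This decomposition is purely algebraic, as stressed by Kesten, and does not depend on the underlying geometry of the cascade.

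\textbf{Step 2 (bounds on the individual increments).} By the standard resampling trick, let $t_k'$ be an independent copy of $t_k$ and let $a'_{0,u}$ denote the first passage time computed after replacing $t_k$ by $t_k'$ everywhere. Since only one flipping time has been perturbed and since $v_k$ can affect the optimal-path cost only when it lies on some optimal path $r^*$,
$$|M_k| \;\le\; \mathbb{E}\bigl[\,|a_{0,u}-a'_{0,u}|\cdot \mathbf{1}\{v_k\in r^*\}\,\big|\, \mathcal{F}_k\bigr] \;\le\; \mathbb{E}\bigl[(t_k\vee t_k')\cdot \mathbf{1}\{v_k\in r^*\}\,\big|\, \mathcal{F}_k\bigr].$$
The exponential-moment hypothesis (\ref{eq:1.14}) then yields sub-exponential tails for $|M_k|$, which is one of the two hypotheses required by Theorem~\ref{Thrm:Kesten_Azuma}.

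\textbf{Step 3 (sum of conditional second moments and application of the martingale inequality).} Squaring the pointwise bound of Step~2 and summing gives
$$\sum_k \mathbb{E}[M_k^2\mid \mathcal{F}_{k-1}] \;\le\; C\,\mathbb{E}\bigl[\,|r^*|\,\big|\,\mathcal{F}_{k-1}\bigr],$$
where $|r^*|$ denotes the length (number of flips) of an optimal path from $\mathbf{0}^*$ to $u$. Bounding $\mathbb{E}[|r^*|]$ by $C\|u\|$ requires the two-sided FPP comparison alluded to in the proof outline, adapting the length estimates of~\cite{kesten1986aspects}. The \emph{lower} comparison exploits Lemma~\ref{Lemma:monoch_spread_1}, which guarantees that every relevant $w$-block is a region of expansion, so that a short cascading path from $\mathbf{0}^*$ to $u$ always exists and is stochastically dominated, block by block, by an auxiliary FPP process whose expected passage time is linear in $\|u\|$. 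The \emph{upper} comparison uses the good-block estimate of Lemma~\ref{Lemma:goodblock} together with (\ref{eq:1.14}) to rule out abnormally long optimal paths that would wander far from the straight segment $[0,u]$. Substituting the resulting linear bound on the conditional variance and the sub-exponential tail on the increments into Theorem~\ref{Thrm:Kesten_Azuma} produces~(\ref{eq:1.15}); the $\sqrt{\|u\|\log\|u\|}$ denominator is precisely the Bernstein trade-off between sub-exponential increments and a conditional variance of order $\|u\|$.

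\textbf{Main obstacle.} The hardest step will be the two-sided FPP comparison in Step~3. Our cascading dynamics differ from classical FPP in a crucial way: a particle cannot become p-stable until a sufficient fraction of its neighbors have already flipped, so the cascade cannot propagate along arbitrary lattice paths and its geometry is strongly coupled to the initial random configuration through the region-of-expansion condition. Kesten's length estimates must therefore be re-derived at the coarse-grained scale of $w$-blocks, using the region-of-expansion property to produce a guaranteed fast path and the good-block property to prevent the optimal path from meandering. Producing a bound on $\mathbb{E}[|r^*|]$ that is truly linear in $\|u\|$---rather than merely $\|u\|\cdot\mathrm{poly}(w)$, which would spoil the concentration exponent---is the main technical hurdle.
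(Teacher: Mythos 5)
Your overall architecture (Doob martingale in the flipping times, conditional second moments controlled by the indicator that $v_k$ lies on an optimal path, path-length control via coupling with auxiliary FPP processes, then Kesten's martingale inequality) is exactly the paper's. But Step~2 contains a genuine gap. Theorem~\ref{Thrm:Kesten_Azuma} does \emph{not} accept sub-exponential tails on the increments: hypothesis (\ref{eq:1.24}) demands a deterministic almost-sure bound $|\Delta_k|\le c$, and hypothesis (\ref{eq:1.26}) ties the scale $x_0$ to that constant via $x_0\ge e^2c^2$. Since $F$ is only assumed to have an exponential moment, the raw increments are unbounded and the theorem cannot be applied as you propose. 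The paper's fix is a truncation argument: replace $t(v_i)$ by $\hat t(v_i)=t(v_i)\wedge\frac{4d}{\gamma}\log n$, run the entire martingale argument for the truncated passage time $\hat a_{0,n}$ (so that $|\hat\Delta_k|\le \frac{8d}{\gamma}\log n$, i.e.\ $c=O(\log n)$), and separately control $|a_{0,n}-\hat a_{0,n}|$ and $|\mathbb{E}[a_{0,n}]-\mathbb{E}[\hat a_{0,n}]|$ (Lemma~\ref{Lemma:lemma1}, estimates (\ref{eq:2.30})--(\ref{eq:2.31})). This truncation is not a technicality you can wave away: the level $\log n$ is precisely what produces $x_0\asymp n\log n$ and hence the $\sqrt{\|u\|\log\|u\|}$ in the exponent of (\ref{eq:1.15}). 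Your ``Bernstein trade-off'' heuristic with variance $\asymp\|u\|$ and sub-exponential increments would predict a rate $e^{-c\lambda^2/\|u\|}$ in the Gaussian regime, not $e^{-c\lambda/\sqrt{\|u\|\log\|u\|}}$; the actual exponent cannot be recovered without the truncation.

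A second, smaller gap is in Step~3: hypothesis (\ref{eq:1.27}) of Theorem~\ref{Thrm:Kesten_Azuma} requires an \emph{exponential tail bound} $P\{\sum_k U_k>x\}\le C_1e^{-C_2x}$ on the optimal path length, not merely a bound on $\mathbb{E}[|r^*|]$ (which via Markov would give only a polynomial tail and destroy the conclusion). The paper obtains the tail from the decomposition (\ref{eq:2.26}), $P\{|\pi_n|\ge yn\}\le P\{a_{0,n}\ge ayn\}+P\{\exists\text{ self-avoiding path of }\ge yn\text{ steps with }T(r)<ayn\}$, bounding the first term through the second intermediate FPP (Lemma~\ref{Lemma:secondFPP} plus a large-deviation estimate using (\ref{eq:1.14})) and the second through the first intermediate FPP and the modified Kesten proposition (Proposition~\ref{Prop:mod_kesten}, with the lattice dimension $d=\log_2\size+2$ chosen so the block-level coupling works). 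Your identification of the two-sided FPP comparison as the main obstacle is correct in spirit, but the object you need to control is the tail of $|\pi_n|$, not its mean, and the $\mathrm{poly}(w)$ losses you worry about are absorbed into the constants and the $(\log n)^{E}$ correction of Proposition~\ref{Prop:mod_kesten} rather than into the leading exponent.
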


\subsection{Proof Outline}

To prove the concentration bound, we represent the difference between $a_{0,u}$ and $\mathbb{E}[a_{0,u}]$ as a sum of martingale differences. The principal step to bound this sum   is to first obtain a bound for the sum of the squares of these differences and apply  a martingale inequality developed by Kesten~\cite{kesten1993speed}  (re-stated as Theorem~\ref{Thrm:Kesten_Azuma} of this paper). 
This   boils down to finding upper bounds for two probabilities. The first one is the probability of the existence of a self-avoiding path with at least a given number of steps and with at most a given passage time (see (\ref{eq:2.23}) and (\ref{eq:2.26})), the second one is the probability that the first passage time is larger than a certain quantity. To bound these terms, we introduce two intermediate FPPs that,  when coupled with our modified model, provide upper bounds for each  of these probabilities. 
 We decorate the variables pertaining to these two processes by prime and double prime respectively, throughout this section. 

\subsection{First intermediate FPP process}

First consider the following auxiliary process. Put a $\theta$ particle on each  node of $\mathbb{Z}^2$. we partition $\mathbb{Z}^2$ into $w$-neighborhoods starting from the $w$-neighborhood centered at the origin and consider the re-normalized lattice $Z'$  where each node of $Z'$ corresponds to a  $w$-neighborhood of the partition. We assume that each node of $Z'$ is connected to its horizontal, vertical, and diagonal neighbors.  Initially  all the particles on $\mathbb{Z}^2$ in this process, regardless of their neighborhood configuration,  are labeled as stable, except for the particles located in the $w$-neighborhood centered at the origin, which are labeled as p-stable.
Each particle is assigned a single flipping time $t_i$, that is the same (single) flipping time of the particle in our modified process. 
Once a particle $v_i$ flips, it is labeled as stable permanently, and all the $\theta$-particles  in  the $w$-neighborhoods that are horizontally, vertically, or diagonally adjacent to the $w$-neighborhood where the flip has occurred, i.e., all the $\theta$-particles on the neighboring $w$-neighborhoods in $Z'$, are labeled as p-stable. We then define a path as a sequence of $v_i$'s such that the particles located on them can flip in this order. By coupling, 
it should be clear that 
 if there is  a self-avoiding path with its first node located in a block centered at the origin with at least a given number of steps and at most a given passage time in our modified process, then there is such a path also in  the   process that we have just  described.  Next, we perform an additional coupling of this process (and hence of our modified process) with a $d$-dimensional FPP process.

Consider a $d$-dimensional lattice $\mathbb{Z}^d$, where $d=\log_2 \size + 2$ is assumed to be an integer. Each node of $\mathbb{Z}^d$ is represented by a vector of integer coordinates $v = (v_1,...,v_d)$. We   construct a mapping  $(Z',\mathbb{Z}^2) \rightarrow \mathbb{Z}^d$ as follows.
We   identify each node of $Z'$ by the first two coordinates   $(v_1,v_2)$ of   $\mathbb{Z}^d$. For every    $(v_1,v_2) \in Z'$   we  then assign the $N$ nodes of $\mathbb{Z}^2$ of the corresponding $w$-neighborhood, to    points of $\mathbb{Z}^d$  by letting the remaining $d-2$ coordinates be in  $\{0,1\}^{(d-2)}$. 
%
%
We   consider  a site FPP process on $\mathbb{Z}^d$ where the passage times for each image node on $\mathbb{Z}^d$ from $\mathbb{Z}^2$   are chosen to be equal to the flipping times  in our modified process.  The passage times for  the rest of the nodes 
are chosen to be  i.i.d  with distribution $F$. We call this our \textit{first intermediate FPP process}. By coupling, it should be clear that 
 if there is a self-avoiding path from the origin with at least a given number of steps and at most a given passage time in our modified model, then there is such a path also in   the auxiliary process described in the beginning of this section, and   in turn in the first intermediate FPP process. It follows that we can  bound the probability of occurrence of such an event in our modified model by the probability of occurrence of the corresponding event in the first intermediate FPP process. We state this bound rigorously in the following.

\begin{definition}[Path and First Passage Time  for the First Intermediate FPP]
Let a path $r'$ be a set of nodes in the first intermediate FPP setting such that the sequence $v_1,v_2,\ldots,v_k$ of passages with passage times $t_1,...,t_k$ is possible. Let
\begin{align*}
T'(r') = \sum_{i=1}^{k} t_i.
\end{align*}

Let $\mathcal{N}$ be a neighborhood. 
We define the \textit{first passage time} from the origin to $\mathcal{N}$ in the first intermediate FPP process as
\begin{align*}
T'(\mathbf{0},\mathcal{N})  := \inf_{r'\in \mathcal{P}'} \{T'(r') \},
\end{align*}
where $\mathcal{P}'$ is the set of possible sequences of passages starting from the origin and ending in $\mathcal{N}$.
%
Finally, we write $\xi_i$ for the $i$'th coordinate vector and define the passage time from $0$ to $n\xi_1$ as
\begin{align*}
a'_{0,n} := T'(\mathbf{0^*},n\xi_1).
\end{align*}
\end{definition}

Due to the coupling described above,  we have the following lemma which is used in the proof of the concentration bound.
\begin{lemma*}\label{Lemma:firstFPP}
Let $c,c'>0$ be constants. We have
\begin{align}\nonumber
\begin{split}
P\{\exists \mbox{ self-avoiding path $r$ starting at $0$} \mbox{ of at least $\lceil cn \rceil$ steps} \\ \mbox{ and with $T(r) < c'n$}\} \\
\le P\{\exists \mbox{ self-avoiding path $r'$ starting at $0$ of at least $\lceil cn \rceil$ steps} \\ \mbox{  and with $T'(r') < c'n$}\}.
\end{split}
\end{align}
\end{lemma*}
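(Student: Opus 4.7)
The plan is to make explicit the two-stage coupling sketched in the text preceding the lemma: transfer any witness path on the left into a witness path on the right, preserving both the number of steps and the total passage time, so that a pointwise inclusion of events holds and the probability bound follows by monotonicity.

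Stage one couples the modified model with the auxiliary $\mathbb{Z}^2$ process introduced just before the lemma, using the same flipping time $t_i$ for each particle $v_i$ in both processes. I would argue by induction on $i$ along a given path $r = (v_1, \ldots, v_k)$ in the modified model that the same ordered sequence of flips is realizable in the auxiliary process. The base case is immediate since $v_1$ lies in the affected* block at the origin, which coincides with the initial p-stable set of the auxiliary process. For the inductive step, the hypothesis that $v_{i+1}$ becomes p-stable in the modified model after $v_1, \ldots, v_i$ have flipped forces at least one of these flips, call it $v_j$, to sit inside $\mathcal{N}(v_{i+1})$; since $\|v_j - v_{i+1}\|_\infty \le w$, the $w$-neighborhoods containing $v_j$ and $v_{i+1}$ are spatially coincident or adjacent in the partition, and therefore the more permissive auxiliary rule has already labeled $v_{i+1}$ p-stable by the time $v_j$ flipped. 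Thus the same sequence is realized in the auxiliary process with identical $k$ and identical $T(r)$.

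Stage two transfers this auxiliary-process path to the first intermediate FPP via the injection $\phi: \mathbb{Z}^2 \hookrightarrow \mathbb{Z}^2 \times \{0,1\}^{d-2} \subset \mathbb{Z}^d$. Image nodes carry passage times equal to the flipping times $t_i$ by construction, and the first-intermediate-FPP dynamics on image nodes coincide, after relabeling, with the auxiliary $\mathbb{Z}^2$ dynamics. Hence the image sequence $r' = (\phi(v_1), \ldots, \phi(v_k))$ is a valid path in the first intermediate FPP; injectivity of $\phi$ preserves self-avoidance; and the total passage time is $T'(r') = \sum_{i=1}^{k} t_i = T(r) < c'n$ with the same number of steps $k \ge \lceil cn\rceil$. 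Combining the two stages delivers the inclusion of events, and the stated probability inequality follows.

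The main potential obstacle sits in the inductive step of Stage one, specifically in certifying that every newly p-stable particle on the path has a trigger flip on the path within its $w$-neighborhood of interaction, rather than a trigger coming from a flip off the path or from a pre-existing affected node. This is precisely where the initial-distribution hypothesis on $\mathcal{N}_v$ does real work: by conditioning on the absence of both $\theta$- and $\bar\theta$-affected nodes, the initial configuration carries no p-stability, so any later p-stability of $v_{i+1}$ must be produced by a previous flip lying inside $\mathcal{N}(v_{i+1})$, which must itself be one of $v_1, \ldots, v_i$ given that the path is a self-consistent flip sequence. Once this step is pinned down, the rest of the argument amounts to routine bookkeeping on $\phi$.
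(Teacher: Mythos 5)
Your proposal follows exactly the route the paper intends: the paper states this lemma without a written proof, deferring entirely to the two-stage coupling described in the surrounding text, and your Stage one (induction along the flip sequence, using the absence of initially affected nodes to force every trigger flip to lie on the path and within $l_\infty$-distance $w$, hence in the same or an adjacent $w$-block) is a faithful and correct elaboration of that argument.

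The one point you assert rather than verify is the crux of Stage two: that $r'=(\phi(v_1),\ldots,\phi(v_k))$ is a \emph{valid} path in the first intermediate FPP with the \emph{same} number of steps and the \emph{same} passage time. This requires checking that consecutive vertices of the auxiliary path map to adjacent sites of $\mathbb{Z}^d$. That does hold, but only under the $l_\infty$ (Moore) adjacency on $\mathbb{Z}^d$: since $v_i$ and $v_{i+1}$ lie in the same or in horizontally, vertically, or diagonally adjacent $w$-blocks, their images differ by at most one in each of the first two coordinates and by at most one in each of the last $d-2$ coordinates, so $\|\phi(v_i)-\phi(v_{i+1})\|_\infty\le 1$. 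If one instead read the first intermediate FPP with the standard nearest-neighbor ($l_1$) adjacency, consecutive images would generally not be adjacent; interpolating with intermediate sites would preserve the step count but would add their passage times, giving $T'(r')\ge T(r)$ and destroying the implication $T(r)<c'n \Rightarrow T'(r')<c'n$. So you should state explicitly that the $d$-dimensional process is taken with the Moore adjacency (consistently with the modified Kesten proposition the paper invokes afterwards) and record the one-line coordinate computation above; with that addition the pointwise event inclusion, and hence the probability inequality, is complete.
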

%


\


Now we recall a proposition from \cite{kesten1986aspects}. Let $0<p_c<1$ be the critical probability of standard site percolation on the lattice. 

\begin{proposition*}[Kesten \cite{kesten1986aspects}, Proposition~5.8] \label{Prop:Kesten5.8}
If 
\begin{align*} 
F(0) = P\{t(v)=0\} < p_c,
\end{align*}
 then there exist constants $0 < B,C,D< \infty$, depending on $d$ and $F$ only, such that
\begin{align*}
\begin{split}
P\{ \exists \text{ self-avoiding path } r' \text{ from the origin which contains at least } n \\ \text{ edges  and has } T'(r') < Bn \} \\
\le C e^{-Dn}.
\end{split}
\end{align*}
\end{proposition*}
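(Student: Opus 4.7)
The plan is to reduce the claim to a subcritical site-percolation estimate and then invoke the exponential cluster-size decay that Theorem~\ref{Thrm:grimmett_bad_cluster} already provides. First, by right-continuity of the distribution function and the hypothesis $F(0) < p_c$, I would choose $\epsilon > 0$ small enough that $F(\epsilon) < p_c$, and declare a site $v$ to be \emph{open} if $t(v) \le \epsilon$ and \emph{closed} otherwise; since the $t(v)$ are i.i.d., this defines a Bernoulli site percolation of density $F(\epsilon) < p_c$, which is subcritical. I then fix $B := \epsilon/2$, with the understanding that $\epsilon$ (and hence $B$) may later be chosen even smaller.

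Next, I would exploit a simple combinatorial observation: a self-avoiding path $r' = (v_1,\ldots,v_k)$ with $k \ge n$ and $T'(r') < Bn$ contains at most $Bn/\epsilon = n/2$ closed vertices (each closed vertex contributes at least $\epsilon$ to the sum), so its open vertices split into at most $m \le n/2 + 1$ maximal open runs, each lying in a single connected open cluster of $\mathbb{Z}^d$. Denoting these visited clusters by $C_1,\ldots,C_m$, we must have $\sum_{i=1}^{m} |C_i| \ge n/2$. Hence it suffices to bound the probability that some self-avoiding walk from the origin visits a sequence of disjoint open clusters whose total volume is at least $n/2$, while making at most $n/2 + 1$ ``jumps'' across closed vertices.

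The heart of the argument is then to control this event via the subcritical cluster-size tail. Theorem~\ref{Thrm:grimmett_bad_cluster} (exponential decay of the cluster radius at any vertex) together with a standard upgrade to volumes yields $P(|C(v)| \ge s) \le C e^{-Ds}$ uniformly in $v$, for constants $C,D > 0$ depending only on $F(\epsilon)$ and $d$, with $D = D(\epsilon)\to\infty$ as $\epsilon\downarrow 0$. I would enumerate the possible ``skeletons'' of $r'$---the ordered positions of the closed separators together with the walk directions in between---which introduces an entropy factor of at most $K^n$ for some $K = K(d) < \infty$, and then invoke the BK (disjoint-occurrence) inequality to bound, for each skeleton, the probability that the required disjoint open clusters have sizes $\ell_1,\ldots,\ell_m$ with $\sum\ell_i \ge n/2$ by $\prod_i C e^{-D\ell_i}\le C^{n/2+1} e^{-Dn/2}$. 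Summing over skeletons and over $k \ge n$ then yields an overall bound of the form $C' e^{-D' n}$, provided $D/2 > \log K$, which one secures by taking $\epsilon$ (and therefore $B$) sufficiently small so that the cluster-tail rate $D(\epsilon)$ exceeds the entropy rate $\log K$.

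The main technical difficulty is precisely this quantitative comparison: a crude count of self-avoiding walks at the rate of the connective constant $\mu_d$ would only cover the range $F(0) < 1/\mu_d$, which is in general strictly smaller than $p_c$, so to reach the full range $F(0) < p_c$ stated in the proposition one must exploit the \emph{sharpness} of exponential decay in the subcritical phase, namely that $D(\epsilon)$ can be made arbitrarily large by shrinking $\epsilon$. Verifying this quantitative sharpness---and thereby certifying that $\epsilon$ can be chosen small enough, once $d$ and $F$ are fixed, to push the combined bound below any desired exponential rate in $n$---is the step that genuinely requires the hypothesis $F(0) < p_c$ and is the principal obstacle in executing the plan.
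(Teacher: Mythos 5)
This proposition is quoted in the paper verbatim from Kesten's lecture notes (Proposition~5.8 of \cite{kesten1986aspects}); the paper gives no proof of it, so the comparison here is against Kesten's original argument. Your overall architecture --- truncate at a level $\epsilon$ with $F(\epsilon)<p_c$, observe that a cheap long path has few ``closed'' vertices and therefore traverses open clusters of large total volume, then beat a skeleton-entropy factor with the subcritical cluster-size tail via BK --- is indeed the architecture of that proof. However, two steps as you have written them do not go through.

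First, the ``standard upgrade to volumes'' from Theorem~\ref{Thrm:grimmett_bad_cluster} is not standard: exponential decay of the cluster \emph{radius} only gives $P(|C(v)|\ge s)\le Ce^{-Ds^{1/d}}$, since a cluster of volume $s$ need only have radius of order $s^{1/d}$. The exponential decay of the cluster \emph{volume}, $P(|C(v)|\ge s)\le Ce^{-Ds}$ for all $p<p_c$, is a separate and genuinely harder theorem (Aizenman--Newman; Theorem~6.75 in \cite{grimmett1999percolation}). It is true and citable, but it is not a corollary of radius decay.

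Second, and more seriously, your mechanism for beating the entropy is broken: the claim that $D(\epsilon)\to\infty$ as $\epsilon\downarrow 0$ is false whenever $F(0)>0$. As $\epsilon\downarrow 0$ the percolation density $F(\epsilon)$ decreases only to $F(0)$, so $D(\epsilon)$ converges to the finite decay rate at density $F(0)$; it diverges only in the special case $F(0)=0$. Since the hypothesis allows any $F(0)<p_c$, including $F(0)$ arbitrarily close to $p_c$ where $D$ is arbitrarily \emph{small}, you cannot secure $D/2>\log K$ by shrinking $\epsilon$, and with your choice $B=\epsilon/2$ the path may contain up to $n/2$ closed vertices, which leaves a skeleton entropy of genuinely exponential order $K^n$ that the cluster tail cannot absorb. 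The correct fix, which is what Kesten actually does, is to shrink the \emph{fraction} of closed steps rather than the truncation level: take $B=\delta\epsilon$ with $\delta$ small, so that the path has at most $\delta n$ closed vertices and hence at most $\delta n+1$ open runs. The skeleton is then specified by roughly $\delta n$ jump locations, and its entropy is of order $\exp\{(H(\delta)+\delta\log(1/\delta)+O(\delta))n\}=e^{o(n)\cdot n/n}$ as $\delta\to 0$ (using, e.g., $\prod_i|C_i|\le(\sum_i|C_i|/m)^m$ for the choices of exit points within clusters), while the BK bound still yields $e^{-D(1-\delta)n}$; choosing $\delta$ small enough relative to the \emph{fixed} rate $D=D(\epsilon)$ closes the argument for every $F(0)<p_c$. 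In the paper's own application the hypothesis (\ref{eq:1.1}) forces $F(0)=0$, so your route would happen to survive there, but it does not prove the proposition as stated.
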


We now slightly modify the above proposition to account for $d=\log_2N+2$. This proposition is  used in the proof of Theorem~\ref{Thrm:Theorem_Concentration}. The proof follows easily from the proof of the original statement and is omitted.

\begin{proposition*}[Kesten -- Modified] \label{Prop:mod_kesten}
Let $c>0$ be a constant, $n\ge 2^{c\size}$, and $d=\log_2 \size + 2$ be an integer.
If (\ref{eq:1.1}) holds then for sufficiently large $\size$, there exist constants $0 < B,C,D < \infty, 1\le E < \infty$, depending on $F$ only, such that
\begin{align*}
P\{ \exists \mbox{ self-avoiding path } r' \mbox{ from the origin which contains at least } n \\ \mbox{ nodes and has } T'(r') < Bn \} \\
 \le C e^{-Dn/{(\log n)^E}}.
\end{align*}
\end{proposition*}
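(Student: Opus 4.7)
The plan is to adapt Kesten's proof of the original Proposition~5.8 from \cite{kesten1986aspects} to accommodate the growing dimension $d = \log_2 N + 2$, which is what forces the weakened bound $Ce^{-Dn/(\log n)^E}$ in place of $Ce^{-Dn}$. The core strategy of Kesten's argument couples the first intermediate FPP with Bernoulli site percolation on $\mathbb{Z}^d$: one declares a vertex \emph{fast} if its passage time is at most some threshold $\delta$ and \emph{slow} otherwise, and then chooses $\delta$ so that the fast-vertex density $F(\delta)$ is strictly subcritical. Condition (\ref{eq:1.1}) gives $P(t=0)=0$, so by right-continuity of $F$ at zero such a $\delta$ always exists; the modification here is that $\delta = \delta_N$ must be allowed to depend on $N$ since the critical probability $p_c(d)$ shrinks as the dimension grows.

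First I would invoke the standard exponential decay of subcritical site clusters to bound the probability that the fast cluster containing the origin has size at least $k$ by $e^{-k\psi(F(\delta_N))}$, where the rate function $\psi$ is positive on $[0,p_c(d))$. Using the quantitative estimate $p_c(d) \asymp 1/(2d)$ together with the choice $F(\delta_N) \le p_c(d)/2$, one obtains $\psi(F(\delta_N)) \gtrsim 1/d$, and under the scaling $d = \log_2 N + 2$ and $n \ge 2^{cN}$ this translates to $\psi \gtrsim 1/\log n$. Next I would exploit the deterministic observation that if a self-avoiding path of length $n$ has total passage time less than $Bn$ with $B = \delta_N/2$, it must contain more than $n/2$ fast vertices, hence must be contained in a fast cluster of size at least $n/2$; so the probability that such a path emanates from the origin is at most $e^{-(n/2)\psi(F(\delta_N))}$.

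To close the argument I would apply a union bound over self-avoiding paths of length $n$, whose count is at most $2d(2d-1)^{n-1} = e^{O(n\log\log n)}$, since $d \asymp \log\log n$ under our scaling. The per-vertex rate $\psi \gtrsim 1/\log n$ comfortably dominates the $\log(2d-1) = O(\log\log n)$ contribution of the combinatorial factor once $n \ge 2^{cN}$, and this produces the claimed estimate $Ce^{-Dn/(\log n)^E}$ with suitable constants $B,C,D > 0$ and exponent $E \ge 1$ depending only on $F$.

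The main obstacle will be the quantitative calibration of $\delta_N$: condition (\ref{eq:1.1}) provides only the qualitative continuity of $F$ at zero, so the rate at which one can force $F(\delta_N) < p_c(d)$ is not \emph{a priori} uniform in $F$. The resolution is that the statement allows the constants to depend on $F$; for each fixed $F$ satisfying the hypothesis one selects a sequence $\delta_N(F)$ realizing subcriticality and then tracks the induced percolation rate $\psi(F(\delta_N))$, with the exponent $E$ chosen so that $\psi(F(\delta_N)) \ge D/(\log n)^E$ holds. A secondary technical wrinkle is the shift from ``edges" in Kesten's original statement to ``vertices" here, but this amounts to a cosmetic substitution once one works in the site-percolation framework.
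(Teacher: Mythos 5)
The paper itself omits the proof of this proposition (it says the modification ``follows easily'' from Kesten's argument), so I can only judge your reconstruction on its own terms; as written it has two genuine gaps. The first is the ``deterministic observation.'' A self-avoiding path of $n$ vertices with $T'(r')<(\delta_N/2)n$ does contain more than $n/2$ vertices with passage time at most $\delta_N$, but those fast vertices are interspersed with slow ones and therefore typically lie in many \emph{distinct} fast clusters; nothing forces the path into a single fast cluster of size $n/2$. So the event you want to bound is not contained in the event that the origin's fast cluster is large, and the one-cluster tail estimate $e^{-(n/2)\psi}$ simply does not apply. This is precisely the difficulty that makes Kesten's Proposition~5.8 nontrivial: his proof must control a path that strings together up to order $\epsilon n$ separate open clusters, which the subcritical cluster-size tail alone does not do.

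The second gap is in how you close the argument. The bound $e^{-k\psi(F(\delta_N))}$ is a bound on a single event (the origin's cluster exceeds size $k$), not a per-path probability, so multiplying it by the number of self-avoiding paths is incoherent. And even granting that reading, the comparison goes the wrong way: the path count contributes $\log(2d-1)$ per vertex, which tends to infinity with $n$ (it is of order $\log\log\log n$), while your decay rate contributes only $\psi\gtrsim 1/d$ per vertex, which tends to zero; hence $(2d)^n e^{-n\psi/2}\to\infty$, and the claim that $1/\log n$ ``comfortably dominates'' $O(\log\log n)$ is exactly backwards. The viable repair is to drop the cluster-tail route and use the elementary per-path estimate $P\{\text{a fixed path of }n\text{ vertices has at least }n/2\text{ fast vertices}\}\le 2^n F(\delta_N)^{n/2}$, choosing $\delta_N$ so that $F(\delta_N)\le (cd)^{-2}$ --- a strictly stronger requirement than $F(\delta_N)<p_c(d)\asymp 1/(2d)$ --- after which the union bound over at most $(2d)^n$ paths gives a uniform exponential rate. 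Note that any such argument forces $B=\delta_N/2$ to depend on $N$; indeed, for $B$ fixed the statement is actually false for large $d$, since a greedy directed path picking at each step the cheapest of $d$ fresh neighbors has passage time about $nF^{-1}(1/d)=o(n)$. This tension with the proposition's claim that $B$ depends on $F$ only is worth flagging explicitly, since the proposition is applied later with a threshold $a$ that is allowed to vary with $\size$.
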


\subsection{Second intermediate FPP process}
Consider the integer lattice $\mathbb{Z}^2$ and re-normalize it by partitioning the lattice into $w$-blocks such that we have a $w$-block centered at the origin and these blocks form a new lattice $L'$. We assume that each node of $L'$ is connected to its horizontal and vertical neighbors. 
Consider a first passage percolation process on this lattice that is coupled with our modified process by assigning the same random flipping times $t_i$'s of our modified process to the nodes of $\mathbb{Z}^2$. 
We define the passage time of each node on $L'$ as the sum of all the $t_i$'s of the particles of its corresponding block of nodes in $\mathbb{Z}^2$.
We point out that, in our modified process, when we assume that $\mathcal{N}_v$ is a region of expansion, once we have a monochromatic block, it can make a neighboring block monochromatic by flipping its particles one by one such that the time of this event is no larger than the sum of the flipping times of all the particles in this block. This fact implies that our second intermediate FPP provides an upper bound on the passage time from one node to the other on the modified process. These statements are made rigorous in the following.



\begin{definition}[Path and First Passage Time for the Second Intermediate FPP]Let a path $r''$ be a set of nodes $v_1,v_2,\ldots,v_k$ in $\mathbb{Z}^2$ with flipping times $t_1, t_2,\ldots,t_k$ such that they correspond to a possible passage in the second intermediate FPP process on $L'$ as described above. Let
\begin{align*}
T''(r'') = \sum_{i=1}^{k} t_i.
\end{align*}
Let $\mathcal{N}$ be a neighborhood. 
We define the \textit{first passage time} from the origin to $\mathcal{N}$ as
\begin{align*}
T''(\mathbf{0},\mathcal{N})  := \inf_{r''\in \mathcal{P}''} \{T''(r'') \},
\end{align*}
where $\mathcal{P}''$ is the set of possible sequences of passages starting from the origin and ending in $\mathcal{N}$.
Finally, we write $\xi_i$ for the $i$'th coordinate vector and define the passage time from $0$ to $n\xi_1$ as
\begin{align*}
a''_{0,n} := T''(\mathbf{0^*},n\xi_1).
\end{align*}

\end{definition}
Due to the coupling described above, we have the following lemma.
\begin{lemma*}\label{Lemma:secondFPP}
For any $a,y>0$, we have
\begin{align*}
P\{a_{0,n} \ge ayn\} \le P\{a''_{0,n} \ge ayn\}.
\end{align*} 
\end{lemma*}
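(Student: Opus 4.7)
The plan is to establish the almost-sure pathwise domination $a_{0,n}\le a''_{0,n}$ under the coupling in which the same flipping times $t_i$ are shared between the modified model and the second intermediate FPP. Once this pathwise bound is in hand, the inclusion $\{a_{0,n}\ge ayn\}\subset\{a''_{0,n}\ge ayn\}$ immediately yields the stated inequality of probabilities.

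First I would fix a realization of the flipping times and, for arbitrary $\epsilon>0$, pick a path of $w$-blocks $r''=(B_0,B_1,\ldots,B_k)$ in $L'$ from the origin-block to the block containing $n\xi_1$ with $T''(r'')\le a''_{0,n}+\epsilon$. I would then exhibit an admissible schedule of flips in the modified model producing an affected particle at $n\xi_1$ within total elapsed time at most $\sum_{j=0}^{k}\sum_{v\in B_j} t_v = T''(r'')$. The construction is inductive in $j$: the affected* block placed at the origin forces every particle in $B_0$ to flip within its own flipping time, and having brought $B_0\cup\cdots\cup B_{j-1}$ to the fully flipped state I would process the particles of $B_j$ one at a time, always activating next the clock of the unflipped particle in $B_j$ whose neighborhood overlaps the already-flipped region the most.

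The step I expect to require the most care is verifying within-block feasibility, namely that at the moment each clock is activated the corresponding particle is genuinely p-stable and remains so for the full duration of its flipping time. This should follow from the initial assumption that the whole particle region is a region of expansion with no $\theta$- or $\bar{\theta}$-affected nodes, combined with the overlap of the neighborhoods of adjacent $w$-blocks in $G_w$: once the flipped particles in $B_{j-1}$ and in the already-processed prefix of $B_j$ cover a sufficient portion of the neighborhood of the next target, its same-state neighbor count drops below $\tau N$, so the target becomes p-stable; it cannot lose this status afterwards since in the modified model each particle flips at most once, so no new same-state neighbor can appear. This is exactly the expansion mechanism highlighted in the paragraph preceding the statement.

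Summing over the blocks, the constructed schedule produces an affected particle at $n\xi_1$ within total elapsed time at most $T''(r'')\le a''_{0,n}+\epsilon$, so $a_{0,n}\le a''_{0,n}+\epsilon$ almost surely under the coupling. Sending $\epsilon\downarrow 0$ and passing to probabilities finishes the proof.
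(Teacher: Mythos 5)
Your proposal is correct and follows essentially the same route as the paper: the paper justifies the lemma by the same coupling (shared flipping times) together with the observation that, since the whole region is a region of expansion, a monochromatic $w$-block can turn an adjacent block monochromatic by flipping its particles one by one in time at most the sum of their flipping times, giving the pathwise bound $a_{0,n}\le a''_{0,n}$ and hence the inclusion of events. Your write-up merely makes explicit the block-by-block inductive schedule and the monotonicity argument (flips only remove same-state neighbors, so p-stability persists) that the paper leaves informal.
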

\subsection{The Bound} \label{subsec:the_bound}
\

The following theorem is a restatement of Theorem~3 in \cite{kesten1993speed} which plays a critical role in the proof of our concentration bound.

\begin{theorem*}[Kesten \cite{kesten1993speed}]\label{Thrm:Kesten_Azuma}
Let $\{\mathscr{F}_k\}_{0\le k\le V}$ be an increasing family of $\sigma$-fields of measurable sets and let $\{U_k\}_{0\le k \le V}$ be a family of positive random variables that are $\mathscr{F}_V$-measurable. (We do not assume that $U_k$ is $\mathscr{F}_k$-measurable.) Let $\{Z_k\}_{0\le k \le V}$ be a martingale with respect to $\{\mathscr{F}_k\}_{0\le k\le V}$. (We allow $V=\infty$, in which case $\mathscr{F}_V = \vee_0^\infty \mathscr{F}_k$ and we merely assume that $\{Z_k\}_{0\le k < \infty}$ is a martingale.) Assume that the increments $\Delta_k := Z_k-Z_{k-1}$ satisfy
\begin{align}\label{eq:1.24}
|\Delta_k| \le c \mbox{ for some constant } c,
\end{align} 
and
\begin{align}\label{eq:1.25}
E\{\Delta^2_k | \mathscr{F}_{k-1} \} \le E\{ U_k | \mathscr{F}_{k-1} \}.
\end{align}

Assume further that for some constants $0 < C_1, C_2<\infty$ and $x_0$ with 
\begin{align}\label{eq:1.26}
x_0 \ge e^2c^2,
\end{align}
we have
\begin{align}\label{eq:1.27}
P\left\{ \sum_{k=1}^V U_k > x \right\} \le C_1e^{-C_2x} \mbox{ when } x \ge x_0.
\end{align}

Then, in the case where $V=\infty$, $Z_V = \lim_{k\rightarrow \infty} Z_k$ exists w.p.1. Moreover, irrespective of the value of $V$, there exist universal constants $0 < C_3,C_4 < \infty$ that do not depend on $V, C_1, C_2, c \mbox { and } x_0$, nor on the distribution of $\{Z_k\}$ and $\{ U_k \}$, such that 
\begin{align}\label{eq:1.28}
P\{Z_V-Z_0 \ge x \} \le C_3\left( 1 + C_1 + \frac{C_1}{C_2x_0} \right) \times \exp\left(-C_4 \frac{x}{x_0^{1/2} + C_2^{-1/3}x^{1/3}} \right).
\end{align}
In particular, for $x\le C_2x_0^{3/2}$,
\begin{align}\label{eq:1.29} 
P\{Z_V - Z_0 \ge x \} \le C_3 \left( 1 + C_1 + \frac{C_1}{C_2x_0} \right)\exp \left( - \frac{C_4x}{2\sqrt{x_0}} \right).
\end{align} 
\end{theorem*}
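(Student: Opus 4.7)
The plan is the classical exponential-moment (Chernoff) approach, using the bounded-increment assumption~(\ref{eq:1.24}) together with the conditional second-moment control~(\ref{eq:1.25}) to transfer tail information from $\sum_k U_k$ to the martingale $Z_V-Z_0$. First I would derive a one-step exponential moment bound. For $\lambda>0$ with $\lambda c$ of moderate size, a third-order Taylor estimate of the form $e^{\lambda x}\le 1+\lambda x+\lambda^2x^2\varphi(\lambda c)$ for $|x|\le c$, where $\varphi$ is an explicit mild function with $\varphi(0)=1/2$ and $\varphi(y)\le e^y$, gives after $\mathscr{F}_{k-1}$-conditioning together with the martingale property $\mathbb{E}[\Delta_k\mid\mathscr{F}_{k-1}]=0$ and~(\ref{eq:1.25})
\[
\mathbb{E}[e^{\lambda\Delta_k}\mid\mathscr{F}_{k-1}]\le 1+\lambda^2\varphi(\lambda c)\,\mathbb{E}[U_k\mid\mathscr{F}_{k-1}]\le \exp\!\bigl(\lambda^2\varphi(\lambda c)\,\mathbb{E}[U_k\mid\mathscr{F}_{k-1}]\bigr).
\]
Iterating over $k=1,\ldots,V$ makes $M_k:=\exp\bigl(\lambda(Z_k-Z_0)-\lambda^2\varphi(\lambda c)\sum_{j\le k}\mathbb{E}[U_j\mid\mathscr{F}_{j-1}]\bigr)$ a supermartingale, so $\mathbb{E}[M_V]\le 1$ and
\[
\mathbb{E}\bigl[e^{\lambda(Z_V-Z_0)}\bigr]\le \mathbb{E}\bigl[\exp\bigl(\lambda^2\varphi(\lambda c)\,T\bigr)\bigr],\qquad T:=\sum_{j=1}^V\mathbb{E}[U_j\mid\mathscr{F}_{j-1}].
\]

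The next step is to bound $\mathbb{E}[\exp(\mu T)]$ by means of~(\ref{eq:1.27}). After arguing that $T$ inherits an exponential tail comparable to $\sum_k U_k$ (via a Doob-decomposition argument, or equivalently by rerunning the Chernoff step with $U_k$ itself playing the role of its compensator), an integration by parts yields $\mathbb{E}[e^{\mu T}]\le e^{\mu x_0}+C_1\frac{\mu}{C_2-\mu}e^{(\mu-C_2)x_0}$ for $\mu<C_2$. Combining with Markov's inequality then delivers the master bound
\[
P\{Z_V-Z_0\ge x\}\le \Bigl(1+C_1+\frac{C_1}{C_2x_0}\Bigr)\exp\!\bigl(-\lambda x+\lambda^2\varphi(\lambda c)\,x_0\bigr),
\]
valid for every $\lambda$ with $\lambda c\lesssim 1$ and $\lambda^2\varphi(\lambda c)<C_2$.

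The optimization in $\lambda$ splits into the two regimes that generate the two summands in the denominator $\sqrt{x_0}+C_2^{-1/3}x^{1/3}$ of~(\ref{eq:1.28}). In the sub-Gaussian regime $x\le C_2 x_0^{3/2}$, the natural choice $\lambda\sim x/(2x_0)$ still satisfies $\lambda c\le 1$ thanks to~(\ref{eq:1.26}) so that $\varphi(\lambda c)=O(1)$, producing the bound $\exp(-C_4 x/(2\sqrt{x_0}))$ of~(\ref{eq:1.29}). In the far-tail regime $x>C_2 x_0^{3/2}$, the increment bound pushes $\lambda$ up against $\lambda c\simeq 1$ while simultaneously the integrability constraint $\lambda^2\varphi(\lambda c)<C_2$ becomes binding; balancing these two constraints against the linear gain $\lambda x$ produces the $x^{2/3}$ rate visible in~(\ref{eq:1.28}). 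For the case $V=\infty$, hypothesis~(\ref{eq:1.27}) gives $\mathbb{E}\sum_k U_k<\infty$, hence $\sum_k \mathbb{E}[\Delta_k^2]<\infty$ by~(\ref{eq:1.25}), and $Z_k$ converges a.s.\ as an $L^2$-bounded martingale by Doob's theorem.

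The hard part is precisely that the natural exponential supermartingale produces the \emph{compensator} $T$ rather than the raw sum $\sum_k U_k$, while the hypothesis~(\ref{eq:1.27}) is stated for the latter. Since $T$ is not in general a Lipschitz function of $\sum_k U_k$, the tail of $T$ cannot be read off from that of $\sum_k U_k$ automatically; the cleanest workaround is to rearrange the Chernoff step so that $\sum_k U_k$ itself appears as the drift of the exponential supermartingale, for instance by invoking Jensen together with the tower property, or by splitting $U_k=(U_k-\mathbb{E}[U_k\mid\mathscr{F}_{k-1}])+\mathbb{E}[U_k\mid\mathscr{F}_{k-1}]$ and handling the martingale and predictable parts separately. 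The second subtle point, needed to obtain the sharp $x^{2/3}$ exponent rather than a cruder $x^{1/2}$, is that the third-order term in the Taylor expansion of $e^{\lambda\Delta_k}$ must be retained, exploiting $|\Delta_k|^3\le c\,\Delta_k^2$; this extra $c$ is precisely what the lower bound $x_0\ge e^2c^2$ in~(\ref{eq:1.26}) is designed to absorb.
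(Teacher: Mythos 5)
First, a point of comparison: the paper does not prove this statement at all --- it is imported verbatim as Theorem~3 of Kesten's 1993 paper and used as a black box --- so the only meaningful benchmark is Kesten's original argument, which is indeed the Freedman/Bernstein-type exponential-supermartingale scheme you outline. Your one-step bound $\mathbb{E}[e^{\lambda\Delta_k}\mid\mathscr{F}_{k-1}]\le\exp(\lambda^2\varphi(\lambda c)\,\mathbb{E}[U_k\mid\mathscr{F}_{k-1}])$ and the resulting supermartingale are correct, and your treatment of the a.s.\ convergence when $V=\infty$ is fine.

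The gap is exactly where you locate it, and neither of your proposed repairs closes it. Hypothesis~(\ref{eq:1.27}) controls $\sum_k U_k$, while the supermartingale produces the predictable compensator $T=\sum_k\mathbb{E}[U_k\mid\mathscr{F}_{k-1}]$; your displayed bound $\mathbb{E}[e^{\mu T}]\le e^{\mu x_0}+C_1\frac{\mu}{C_2-\mu}e^{(\mu-C_2)x_0}$ is obtained by integrating the tail of $\sum_k U_k$ and is asserted, not proved, for $T$. The first repair (``rearrange the Chernoff step so that $\sum_k U_k$ itself appears as the drift'') fails for two reasons: the theorem explicitly does \emph{not} assume $U_k\in\mathscr{F}_k$, so $\sum_{j\le k}U_j$ is not adapted and $e^{\lambda Z_k-\mu\sum_{j\le k}U_j}$ is not a process to which the supermartingale property or optional stopping can be applied; and even for adapted $U_k$, the required one-step inequality $\mathbb{E}[e^{\lambda\Delta_k-\mu U_k}\mid\mathscr{F}_{k-1}]\le 1$ does not follow from~(\ref{eq:1.25}), which ties $\Delta_k$ to $U_k$ only through conditional moments, not pointwise (conditionally, $\Delta_k$ may be large precisely where $U_k$ is small). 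The second repair (Doob decomposition of $U_k$) fails for the same reason: $U_k-\mathbb{E}[U_k\mid\mathscr{F}_{k-1}]$ is not a martingale difference with respect to $\{\mathscr{F}_k\}$ when $U_k$ is only $\mathscr{F}_V$-measurable, and no moment control on the $U_k$ beyond~(\ref{eq:1.27}) is available to handle that ``martingale part'' anyway. Kesten resolves this with a dedicated stopping-time argument on the predictable increasing process $T$, exploiting the identity $\mathbb{E}[U_k 1_{\{\tau\ge k\}}]=\mathbb{E}[\mathbb{E}[U_k\mid\mathscr{F}_{k-1}]1_{\{\tau\ge k\}}]$ for stopping times $\tau$ (valid because $\{\tau\ge k\}\in\mathscr{F}_{k-1}$); this is the piece you would actually have to supply. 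Two smaller issues: $\mathbb{E}[M_V]\le 1$ does not by itself give $\mathbb{E}[e^{\lambda(Z_V-Z_0)}]\le\mathbb{E}[e^{\lambda^2\varphi(\lambda c)T}]$ --- you need Cauchy--Schwarz with an adjusted exponent, or better a split on $\{T\le y\}$ versus $\{T>y\}$, which again requires the tail of $T$. And your account of the far-tail regime is off: if $\lambda c\simeq 1$ and $\lambda^2\varphi(\lambda c)\simeq C_2$ are both binding, then $\lambda$ is independent of $x$ and the exponent is linear in $x$; the $x^{2/3}$ rate actually comes from jointly optimizing $\lambda\sim(C_2/x)^{1/3}$ and the truncation level $y\sim(x/C_2)^{2/3}$ for $T$ subject to $y\ge x_0$, which is where the regime boundary $x=C_2x_0^{3/2}$ arises.
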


With a bit of abuse of notation, let 
\begin{align*}
a_{0,n} := T(\mathbf{0^*},n\xi_1).
\end{align*} 

\begin{proof}[Proof of Theorem~\ref{Thrm:Theorem_Concentration}] 
We order the vertices of $G_w$ in some arbitrary way, $v_1,v_2,\ldots$ which remains fixed throughout. 
Without loss of generality for the modified model, we work with a slightly simpler probability  space which does not take into account the initial configuration of the particles $\{{\theta},\bar{\theta}\}^{\mathbb{Z}^2}$, remembering that it is not possible to have multiple flipping times for each particle. This probability space  is
\begin{align*}
\Omega =  \mathbb{R}_+ \times \mathbb{R}_+ \times \ldots, \ \ \ \mathbb{R}_+ = [0,\infty),
\end{align*}
and a generic point of $\Omega$ is denoted by $\omega = (\omega_1,\omega_2,\ldots)$. In the configuration $\omega$, the flipping time of $v_i$ is 
\begin{align*}
t(v_i) = t(v_i,\omega) = \omega_i.
\end{align*}

When it is necessary to indicate the dependence on $\omega$ we write $a_{0,k}(\omega)$ instead of $a_{0,k}$, $T(r,\omega)$ instead of $T(r)$, etc. We shall use the following $\sigma$-fields of subsets of $\Omega$:
\begin{align*}
\mathscr{F}_0 = \mbox{ the trivial } \sigma\mbox{-field } = \{\emptyset , \Omega\}, \\
\mathscr{F}_k = \sigma\mbox{-field generated by } \omega_1,\ldots,\omega_k, \ \ k \ge 1. 
\end{align*}
The martingale representation of $a_{0,n}-\mathbb{E}[a_{0,n}]$ is 
\begin{align}\label{eq:2.1}
a_{0,n} - \mathbb{E}[a_{0,n}] = \sum_{k=1}^\infty \left( \mathbb{E}\{ a_{0,n}|\mathscr{F}_k \} - \mathbb{E}\{a_{0,n}| \mathscr{F}_{k-1}\} \right).
\end{align}
This representation is valid because $Z_0:=0$ and
\begin{align}\label{eq:2.2}
Z_l &:= \sum_{k=1}^l \left(\mathbb{E}\{a_{0,n}|\mathscr{F}_k\} - \mathbb{E}\{a_{0,n}|\mathscr{F}_{k-1}\} \right) \\
&= \mathbb{E}\{a_{0,n}|\mathscr{F}_l\} - \mathbb{E}[a_{0,n}], \ \ l\ge 1,
\end{align}
defines an $\{\mathscr{F}_l\}$-martingale that converges w.p.1 to  $a_{0,n} - \mathbb{E}[a_{0,n}]$. The increments of $\{Z_l \}$ are denoted by 
\begin{align}\label{eq:2.3}
\Delta_k = \Delta_{k,n}(\omega) = \mathbb{E}\{ a_{0,n} | \mathscr{F}_k \} - \mathbb{E}\{a_{0,n}|\mathscr{F}_{k-1}\}.
\end{align}

The principal step is to estimate 
\begin{align*}
\mathbb{E}\{\Delta^2_k | \mathscr{F}_{k-1}\}.
\end{align*}
To this end, we write 
\begin{align*}
a_{0,n}(\omega) = f(t(v_1,\omega),t(v_2,\omega),\ldots) = f(\omega_1,\omega_2,\ldots)
\end{align*}
for some Borel function $f : \Omega \rightarrow \mathbb{R}_+$. Also, the following notation is useful. If $\omega = (\omega_1,\omega_2,\ldots)$ and $\sigma = (\sigma_1,\sigma_2,\ldots)$ are points of $\Omega$, then
\begin{align*}
[\omega,\sigma]_k = (\omega_1,\ldots,\omega_k,\sigma_{k+1},\ldots)
\end{align*}
is the point that agrees with $\omega$ and $\sigma$ on the first $k$ coordinates and the coordinates after $k$, respectively. $\nu_{k+1}$ will be the product measure 
\begin{align*}
\nu_{k+1} =  \Pi_{k+1}^{\infty} F_i
\end{align*}
on the obvious $\sigma$-field in
\begin{align*}
\Omega_{k+1} =  R_{k+1} \times R_{k+2} \times \ldots 
\end{align*}
when each $R_i = \mathbb{R}_+$ and $F_i = F$. We can think of $\Omega$ as $R_1 \times \ldots \times R_k \times \Omega_{k+1}$ and if $g$ is a function from $\Omega \rightarrow \mathbb{R}$, then if we fix $\sigma_1,\ldots,\sigma_k, g(\sigma)$ can be viewed as a function of $\sigma_{k+1},\sigma_{k+2},\ldots$; that is, as a function on $\Omega_{k+1}$. Correspondingly, 
\begin{align*}
\int _{\Omega_{k+1}}\nu_{k+1}(d\sigma)g(\sigma) := \int {\Pi_{k+1}^{\infty}} F(d\sigma_i)g(\sigma_1,\ldots,\sigma_k,\sigma_{k+1},\ldots)
\end{align*} 
is the integral over all coordinates $\sigma_i$, with $i\ge k+1$, and is a function of $\sigma_1,\ldots,\sigma_k$.
By the independence of the $t(v_i,\omega)=\omega_i$, $i\ge 1$, we have 
\begin{align}\label{eq:2.6}
\mathbb{E}\{a_{0,n} | \mathscr{F}_k\}(\omega) = \int_{\Omega_{k+1}}\nu_{k+1}(d\sigma)f([\omega,\sigma]_k).
\end{align}
This is a function of $t(v_i,\omega)=\omega_i$, $1\le i \le k$, only. It also equals
\begin{align} \label{eq:2.7}
\int_{\Omega_k}\nu_k(d\sigma)f([\omega,\sigma]_k),
\end{align}
because $[\omega,\sigma]_k$ does not involve $\sigma_k$ and the integration over $\sigma_k$ has no effect. Using (\ref{eq:2.7}) for $\mathbb{E}\{a_{0.n}|\mathscr{F}_k\}$ and (\ref{eq:2.6}) with $k$ replaced by $(k-1)$ for $\mathbb{E}\{a_{0,n}|\mathscr{F}_{k-1}\}$, we find
\begin{align}\label{eq:2.8}
\Delta_k = \int_{\Omega_k} \nu_k(d\sigma)\{f[\omega,\sigma]_k-f([\omega,\sigma]_{k-1})\}.
\end{align}
Our task now is to estimate 
\begin{align*}
g_k(\omega,\sigma) := |f([\omega,\sigma]_k)-f([\omega,\sigma]_{k-1})|.
\end{align*}
Note that
\begin{align}\label{eq:2.10}
t(v_i,[\omega,\sigma]_k) = t(v_i,[\omega,\sigma]_{k-1})  = 
\begin{cases}
t(v_i,\omega),  \ \ \mbox{if } i\le k-1,  \\
t(v_i,\sigma), \ \ \mbox{if } i\ge k+1.
\end{cases}
\end{align}
Only for $i=k$ do we obtain different values for the flipping time of $v_i$ in the two configurations $[\omega,\sigma]_k$ and $[\omega,\sigma]_{k-1}$:
\begin{align*}
t(v_k,[\omega,\sigma]_k) = t(v_k,\omega), \ \ \ t(v_k,[\omega,\sigma]_{k-1}) = t(v_k,\sigma).
\end{align*}
We claim that this implies
\begin{align}\label{eq:2.12}
g_k(\omega,\sigma) \le |t(v_k,\omega)-t(v_k,\sigma)|.
\end{align}
Indeed, for any path $r$,
\begin{align}\label{eq:2.13}
|T(r,[\omega,\sigma]_k)-T(r,[\omega,\sigma]_{k-1})| &\le \sum_{v \in r} |t(v,[\omega,\sigma]_k)-t(v,[\omega,\sigma]_{k-1})| \\
&\le |t(v_k,\omega) - t(v_k,\sigma)|. \nonumber
\end{align}
Therefore, the same estimate holds for
\begin{align*}
|a_{0,n}([\omega,\sigma]_k)-a_{0,n}([\omega,\sigma]_{k-1})| = |\inf_r T(r,[\omega,\sigma]_k)-\inf_r T(r,[\omega,\sigma]_{k-1})|.
\end{align*}

This proves (\ref{eq:2.12}). Let $\pi_n(\omega)$ be the
optimal path from $0$ to $n \xi_1$ in the
configuration $\omega$; that is, $\pi_n(\omega)$ is a path from $0$ to $n\xi_1$ with
\begin{align}\label{eq:2.14}
a_{0,n}(\omega) = T(\pi_n(\omega),\omega).
\end{align}
In our modified process, such a path always exists. There could, however, be several paths with this property. To define $\pi_n(\omega)$ uniquely in case of ties, we order all paths from $0$ to $n\xi_1$ in some arbitrary way, and take for $\pi_n(\omega)$ the first path in this ordering that satisfies (\ref{eq:2.14}). We write $v \in \pi$ to denote that $v$ is a node in the path $\pi$. Then, if
\begin{align}\label{eq:2.15}
v_k \not \in \pi_n([\omega,\sigma]_k),
\end{align}
(\ref{eq:2.10}) and (\ref{eq:2.13}) show that 
\begin{align*}
T(\pi_n([\omega,\sigma]_k),[\omega,\sigma]_k) = T(\pi_n([\omega,\sigma]_k),[\omega,\sigma]_{k-1}).  
\end{align*}
Thus, under (\ref{eq:2.15}),
\begin{align*} 
a_{0,n}([\omega,\sigma]_{k-1}) &= \inf \{T(r,[\omega,\sigma]_{k-1}):r \mbox{ a path from } 0 \mbox{ to } n\xi_1 \} \\
& \le T(\pi_n([\omega,\sigma]_{k}),[\omega,\sigma]_{k-1}) \\
& = T(\pi_n([\omega,\sigma]_{k}),[\omega,\sigma]_{k}) \\
&= a_{0,n}([\omega,\sigma]_{k}).
\end{align*}
Similarly, if
\begin{align}\label{eq:2.16}
v_k \not \in \pi_n([\omega,\sigma]_{k-1}),
\end{align}
then
\begin{align*}
a_{0,n}([\omega,\sigma]_{k}) \le a_{0,n}([\omega,\sigma]_{k-1}). 
\end{align*}
It follows that $g_k(\omega,\sigma)=0$ if (\ref{eq:2.15}) and (\ref{eq:2.16}) both hold, and by virtue of (\ref{eq:2.12}),
\begin{align}\label{eq:2.17}
g_k(\omega,\sigma) \le |t(v_k,\omega)-t(v_k,\sigma)| \times 1_{\{v_k \in \pi_n([\omega,\sigma]_{k-1}) \text{ or } v_k \in \pi_n([\omega,\sigma]_k)\}}.
\end{align}
This is then an estimate for $g_k$. Let $1_k(\omega, \sigma)$ denote the indicator function in the right-hand side of the above inequality. Now using Schwarz's inequality and (\ref{eq:2.8}) we have
\begin{align}\label{eq:2.18}
\begin{split} 
\mathbb{E}\left[ \Delta^2_k | \mathscr{F}_{k-1} \right] &\le \mathbb{E}\left[ \left( \int_{\Omega_k} \nu_k(d\sigma)g_k(\omega,\sigma) \right)^2 \given[\Big]   \mathscr{F}_{k-1} \right] \\
&\le \mathbb{E}\left[ \left( \int_{\Omega_k} \nu_k(d\sigma)|t(v_k,\omega)-t(v_k,\sigma)| 1_k(\omega,\sigma) \right)^2 \given[\Big] \mathscr{F}_{k-1} \right] \\
&\le \mathbb{E}\left[ \int_{\Omega_k} \nu_k(d\sigma)|t(v_k,\omega)-t(v_k,\sigma)|^2 1_k(\omega,\sigma)   \right. \\ 
& \ \ \ \ \  \ \ \ \ \  \ \ \ \ \ \ \ \times \left. \int_{\Omega_k} \nu_k(d\sigma) 1_k(\omega,\sigma) \given[\Big] \mathscr{F}_{k-1} \right] \\
&\le \mathbb{E}\left[ \int_{\Omega_k} \nu_k(d\sigma)|t(v_k,\omega)-t(v_k,\sigma)|^2 1_k(\omega,\sigma)  \given[\Big] \mathscr{F}_{k-1} \right].
\end{split}
\end{align}
Now
\begin{align*}
 \int_{\Omega_k} \nu_k(d\sigma)|t(v_k,\omega)-t(v_k,\sigma)|^2 1_k(\omega,\sigma) ,
\end{align*}
is a function of $\omega_1,\ldots,\omega_k$ only; the $\sigma$-variables all have been integrated out. Similar to (\ref{eq:2.6}) we have
\begin{align}\label{eq:2.19}
\begin{split}
\mathbb{E}&\left[ \int_{\Omega_k} \nu_k(d\sigma)|t(v_k,\omega)-t(v_k,\sigma)|^2 1_k(\omega,\sigma)  \given[\Big] \mathscr{F}_{k-1} \right] \\
&= \int F(d\omega_k) \int_{\Omega_k} \nu_k(d\sigma) |t(v_k,\omega)-t(v_k,\sigma)|^2 \times 1_k(\omega,\sigma) \\
&= \int F(d\omega_k) \int F(d\sigma_k) \int_{\Omega_{k+1}}\nu_{k+1}(d\sigma)|t(v_k,\omega)-t(v_k,\sigma)|^2 \times 1_k(\omega,\sigma).
\end{split}
\end{align}
where we have used the fact that $\nu_k$ can be written as the product measure $F\times \nu_{k+1}$ on $R_k \times \Omega_{k+1} = \mathbb{R}_+\times \Omega_{k+1}$. Let us write
\begin{align*}
J_k(\omega) = 1_{\{v_k \in \pi_n(\omega)\}}.
\end{align*}
Then
\begin{align}\label{eq:2.20}
\begin{split}
1_k(\omega,\sigma) &= J_k([\omega,\sigma]_{k-1}) \vee J_k([\omega,\sigma]_k) \\
&= J_k(\omega_1,\ldots,\omega_{k-1},\sigma_k,\sigma_{k+1},\ldots) \vee J_k(\omega_1,\ldots,\omega_{k-1},\omega_k,\sigma_{k+1},\ldots).
\end{split}
\end{align}

Recall that $t(v_k,\omega) = \omega_k$, $t(v_k,\sigma) = \sigma_k$, so that
\begin{align*}
\begin{split}
|t(v_k,\omega)&-t(v_k,\sigma)|^2 1_k(\omega,\sigma) \\
&\le |\omega_k-\sigma_k|^2\left\{J_k(\omega_1,\ldots,\omega_{k-1},\sigma_k,\sigma_{k+1},\ldots) \right. \\
& \ \ \ \ \ \ \ \ \ \ \ \ \ \ \ \ \ \ \ \ \ \ \  \left. \vee J_k(\omega_1,\ldots,\omega_{k-1},\omega_k,\omega_k,\sigma_{k+1},\ldots)\right\}.
\end{split}
\end{align*}
Clearly the right-hand side is symmetric in $\omega_k$ and $\sigma_k$ for fixed 
\begin{align*}
\omega_1,\ldots,\omega_{k-1}, \sigma_{k+1},\sigma_{k+2},\ldots ,
\end{align*}
 (in fact, this is true also for the left-hand-side). It is also clear that on $\{\sigma_k \le \omega_k \}$ or on $\{t(v_k,\sigma) \le t(v_k,\omega)\}$ we have 
\begin{align}\label{eq:2.21}
|t(v_k,\omega)-t(v_k,\sigma)| \le t(v_k,\omega),
\end{align}
\begin{align}\label{eq:2.22}
J_k(\omega_1,\ldots,\omega_{k-1},\sigma_k,\sigma_{k+1},\ldots) \vee J_k(\omega_1,\ldots,\omega_{k-1},\omega_k,\sigma_{k+1},\ldots) \\
= J_k(\omega_1,\ldots,\omega_{k-1},\sigma_k,\sigma_{k+1},\ldots) = J_k([\omega,\sigma]_{k-1}). \nonumber
\end{align}
(\ref{eq:2.22}) simply says that if $v_k$ belongs to the optimal path in configuration $[\omega,\sigma]_{k-1}$ or in configuration $[\omega,\sigma]_k$, then it will belong to the optimal path in the configuration that gives the lower value to $t(v_k)$. Substituting (\ref{eq:2.19})-(\ref{eq:2.22}) into (\ref{eq:2.18}) we find 
\begin{align}\label{eq:2.23}
\mathbb{E}[\Delta^2_k|&\mathscr{F}_{k-1}] \\
  &\le 2\int_{\Omega_{k+1}}\nu_{k+1}(d\sigma)\int\int_{\sigma_k\le \omega_k}F(d\omega_k)F(d\sigma_k)t^2(v_k,\omega)J_k([\omega,\sigma]_{k-1}) \nonumber \\
& \le  2\int F(d\omega_k)t^2(v_k,\omega)\int F(d\sigma_k)\int_{\Omega_{k+1}}\nu_{k+1}(d\sigma)J_k([\omega,\sigma]_{k-1}) \nonumber \\
& = \left(2\int x^2 dF(x)\right)P\{v_k \in \pi_n(\omega)|\mathscr{F}_{k-1}\}.\nonumber
\end{align}



Let $|\pi|$ denote the number of vertices in $\pi$. For any $a>0$, $y>0$,
\begin{align}\label{eq:2.26}
\begin{split}
P\{|\pi_n| \ge yn\} \le & P\{a_{0,n} \ge ayn\} \\
 &+ P\{\exists \text{ self-avoiding path $r$ starting at $0$} \\ 
 &\text{ of at least $yn$ steps and with $T(r) < ayn$}\}.
 \end{split}
\end{align}
 
Now we want to use our intermediate FPP processes to bound the probabilities on the right hand side of (\ref{eq:2.26}). Let us decorate the variables corresponding to our first and second intermediate FPP processes by a prime and a double prime respectively. We are going to find upper bounds for the right hand side terms of (\ref{eq:2.26}) using these intermediate processes. Let $r''_n$ be the optimal path between the origin and $n\xi_1$ in our second  intermediate process.  Using Lemma~\ref{Lemma:secondFPP} we have
\begin{align}\label{eq:2.27}
P\{a_{0,n} \ge ayn\} \le P\{a''_{0,n} \ge ayn\} = P\{T''(r''_n) \ge ayn\} \le P\left\{\sum_{i=1}^{w'n} t_i \ge ayn\right\},
\end{align}
where 
$w' = O(w)$ adjusts the number of particles for the renormalized graph (our second intermediate process).
Using Lemma~\ref{Lemma:firstFPP}, we also have
\begin{align}\label{eq:2.26'}
\begin{split}
P\{\exists \mbox{ self-avoiding path $r$ starting at $0$} \mbox{ of at least $yn$ steps} \\ \mbox{ and with $T(r) < ayn$}\} \\
\le P\{\exists \mbox{ self-avoiding path $r'$ starting at $0$ of at least $yn$ steps} \\ \mbox{  and with $T'(r') < ayn$}\}.
\end{split}
\end{align}
For a suitable $a$, using Proposition~\ref{Prop:mod_kesten}, and for sufficiently large $\size$, we have
\begin{align*}
\begin{split}
P\{\exists \text{ self-avoiding path $r'$ starting at $0$ of at least $yn$ steps}  \\ \mbox{ and with $T'(r') < ayn$}\} \\
\le C_2 \exp(-C_3yn/(\log n)^{C_4}). 
\end{split}
\end{align*}
Hence, we have
\begin{align}\label{eq:2.26_new}
\begin{split}
P\{|\pi_n| \ge yn\} \le P\left\{\sum_{i=1}^{w'n} t_i \ge ayn\right\} + C_2 \exp(-C_3yn/(\log n)^{C_4}).
 \end{split}
\end{align}

We  show that
\begin{align} \label{eq:1.15*}
P\left\{\left|a_{0,n}-\mathbb{E}[a_{0,n}] \right| \ge x    \right\} \le C_3e^{-C_4x/\sqrt{n\log n}} \ \ for \ \ x\le n,
\end{align} 
when $n\ge 2^{c\size}$ for some constant $c>0$ and $\size$ sufficiently large. Since our proof does not rely on any special properties for the direcitons along the coordinate axes, we can conclude that for any arbitrary direction one can obtain a similar result. In particular (\ref{eq:1.15}) holds.


We want to use Theorem~\ref{Thrm:Kesten_Azuma}, for which we need a truncation argument. Let $n$ be fixed. Define
\begin{align}\label{eq:2.28}
\hat{t}(v_i) = t(v_i)\wedge \frac{4d}{\gamma}\log n ,
\end{align}
with $\gamma$ as in (\ref{eq:1.14}). Passage times and related quantities, when defined in terms of the $\hat{t}$ instead of the $t$, will be denoted by the symbols decorated with a caret. For example, if $r=(v_1,\ldots,v_k)$, then
\begin{align*}
&\hat{T}(r) = \sum_{i=1}^k \hat{t}(v_i); \\
&\hat{a}_{0,n} = \inf\{\hat{T}(r): r \text{ a path from $0$ to $n\xi$}\}, \\
&\hat{\pi}_n = \mbox{optimal path for $\hat{a}_{0,n}$}.
\end{align*}
\begin{lemma*}\label{Lemma:lemma1}
If (\ref{eq:1.1}) and (\ref{eq:1.14}) hold, then there exist constants $1/2 < C_1 < \infty$, $0 < C_i < \infty$ for $i>1$, such that
\begin{align}\label{eq:2.29}
P\{ \hat{\pi}_n \not \subseteq [-n(\log n)^{C_1},n(\log n)^{C_1}]^d \} \le 3e^{-C_2n},
\end{align}
\begin{align}\label{eq:2.30}
P\{ |a_{0,n}-\hat{a}_{0,n}| \ge x \} \le 3e^{-C_2n} + C_3e^{-(\gamma/2)x}, \ x \ge 0,
\end{align}
and
\begin{align}\label{eq:2.31}
|\mathbb{E}[a_{0,n}]-\mathbb{E}[\hat{a}_{0,n}]| \le C_4.
\end{align}
\end{lemma*}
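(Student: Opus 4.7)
The proof of this lemma follows a standard truncation argument adapted to our modified model, exploiting the exponential moment condition (\ref{eq:1.14}) and Proposition~\ref{Prop:mod_kesten}. The three bounds (\ref{eq:2.29})--(\ref{eq:2.31}) will be established in sequence.

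For (\ref{eq:2.29}), I would first produce a linear-in-$n$ upper bound on $\hat{a}_{0,n}$ by considering a canonical short path from $0$ to $n\xi_1$ of length $O(n)$. Since its truncated passage times are i.i.d. with mean bounded independently of $n$, a standard Cram\'er--Chernoff estimate (valid thanks to (\ref{eq:1.14})) gives $P(\hat{a}_{0,n} > Kn) \le e^{-C_2 n}$ for some constants $K, C_2 > 0$. On the other hand, if $\hat{\pi}_n \not\subseteq [-n(\log n)^{C_1}, n(\log n)^{C_1}]^d$, then $\hat{\pi}_n$ must contain at least $n(\log n)^{C_1}$ vertices (since it leaves a box of side $n(\log n)^{C_1}$ starting from $0$). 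Applying Proposition~\ref{Prop:mod_kesten} with parameter $n(\log n)^{C_1}$ in place of $n$ and using $\hat{T} \le T'$ under the couplings, the probability that any self-avoiding path from the origin with that many vertices has passage time less than $Bn(\log n)^{C_1}$ is at most $Ce^{-Dn(\log n)^{C_1}/(\log n)^{E(1+o(1))}}$, which is $\le e^{-C_2 n}$ for $C_1 > E$ sufficiently large. Choosing $C_1$ also large enough that $Bn(\log n)^{C_1} > Kn$, the event in (\ref{eq:2.29}) would force an optimal path with $\hat{T}(\hat{\pi}_n) \ge B n(\log n)^{C_1} > Kn \ge \hat{a}_{0,n}$, a contradiction, so (\ref{eq:2.29}) follows from a union bound.

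For (\ref{eq:2.30}), note that truncation only decreases passage times, so $a_{0,n} \ge \hat{a}_{0,n}$; and since $\hat{\pi}_n$ is a valid path for the untruncated process,
\begin{align*}
0 \le a_{0,n} - \hat{a}_{0,n} \le T(\hat{\pi}_n) - \hat{T}(\hat{\pi}_n) = \sum_{v \in \hat{\pi}_n} Y_v,
\end{align*}
where $Y_v := (t(v) - (4d/\gamma)\log n)^+$. On the event of (\ref{eq:2.29}), $\hat{\pi}_n$ lies in a deterministic set $V_n$ with $|V_n| \le (2n(\log n)^{C_1} + 1)^d$ vertices, so it suffices to bound $P(\sum_{v \in V_n} Y_v \ge x)$. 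Using Cauchy--Schwarz and (\ref{eq:1.14}), one has $\mathbb{E}[e^{(\gamma/2) Y_v}] \le 1 + \mathbb{E}[e^{\gamma t}]^{1/2} P(t > (4d/\gamma)\log n)^{1/2} \le 1 + M \cdot n^{-2d}$ for a constant $M$, and independence gives
\begin{align*}
\mathbb{E}\bigl[e^{(\gamma/2) \sum_{v \in V_n} Y_v}\bigr] \le (1 + M n^{-2d})^{|V_n|} \le \exp\bigl(M (\log n)^{C_1 d}/n^d\bigr),
\end{align*}
which is uniformly bounded in $n$. Markov's inequality yields $P(\sum_v Y_v \ge x) \le C_3 e^{-(\gamma/2) x}$, and combining with (\ref{eq:2.29}) to cover the complementary event delivers (\ref{eq:2.30}).

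Finally, (\ref{eq:2.31}) follows by nonnegativity:
\begin{align*}
|\mathbb{E}[a_{0,n}] - \mathbb{E}[\hat{a}_{0,n}]| = \mathbb{E}[a_{0,n} - \hat{a}_{0,n}] = \int_0^\infty P(a_{0,n} - \hat{a}_{0,n} > x)\, dx,
\end{align*}
and the right side is at most $\int_0^\infty (3e^{-C_2 n} + C_3 e^{-(\gamma/2) x})\, dx \le C_4$ after noting that the $e^{-C_2 n}$ piece only contributes nontrivially over $x \in [0, O(n)]$, which yields an $O(n e^{-C_2 n}) = O(1)$ contribution. The principal obstacle will be tracking how the growing dimension $d = \log_2 N + 2$ propagates through the constants: the $(\log n)^{C_1 d}/n^d$ ratio in the MGF bound must remain bounded, which is guaranteed since $n \ge 2^{cN}$ forces $n^d \gg (\log n)^{C_1 d}$, but one has to verify that constants coming from Proposition~\ref{Prop:mod_kesten} (which themselves depend only on $F$) and from the Cram\'er--Chernoff step do not degrade with $N$.
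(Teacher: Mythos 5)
Your proposal is correct and follows essentially the same route as the paper's proof: the same split of (\ref{eq:2.29}) into a Chernoff bound along a fixed/canonical path plus Proposition~\ref{Prop:mod_kesten} for long cheap self-avoiding paths, the same bound $a_{0,n}-\hat a_{0,n}\le\sum_{v\in\hat\pi_n}\bigl(t(v)-\hat t(v)\bigr)$ restricted to the box from (\ref{eq:2.29}) with a per-vertex moment-generating-function factor $1+O(n^{-2d})$, and the same integration of the tail for (\ref{eq:2.31}). The only point where you are terser than the paper is in justifying that the $3e^{-C_2n}$ term contributes only over $x=O(n\log n)$ when integrating; the paper supplies the needed direct large-$x$ tail estimate $P\{a_{0,n}-\hat a_{0,n}\ge yn\log n\}\le e^{-(\gamma/2)yn\log n}$ via a Chernoff bound on $a''_{0,n}$, which is the one-line supplement your sketch implicitly assumes.
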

\begin{proof}
The probability in the left-hand side of (\ref{eq:2.29}) is bounded by
\begin{align}\label{eq:2.32}
\begin{split}
P\{|\hat{\pi}_n| \ge n(\log n)^{C_1}\} \le P\{\hat{a}_{0,n} \ge an(\log n)^{C_1}\} \\
+ P\{\exists \text{ self-avoiding path $r$ starting at $0$} \\ 
\text{ of at least $n(\log n)^{C_1}$ steps and with $\hat{T}(r) < an(\log n)^{C_1}$}\} \\
 \le P\left\{\sum_{i=1}^{w'n} t_i \ge an(\log n)^{C_1}  \right\} + C_5 \exp(-C_6n(\log n)^{C_1}/{(\log n)^{C_7}}),
\end{split}
\end{align}
similar to (\ref{eq:2.26_new}). Note that the last inequality is true for some constants $C_5,C_6$ that are independent of $n$, although the distribution of $\hat{t}$ does depend on $n$. This follows because for any constant $C$ and  for large enough $n$, we have $\hat{t}(v) \ge t(v)\wedge C$. Thus also $\hat{T}(t) \ge \sum_{v \in r} \{t(v)\wedge C\}$, and it suffices to apply Proposition~\ref{Prop:mod_kesten} and choose $C_1$ to be equal to $C_7$. 

Now we use (\ref{eq:1.14}) for the following standard large deviation estimate:

\begin{align}\label{eq:2.33}
\begin{split}
P\left\{ \sum_{i=1}^{w'n} t_i \ge an(\log n)^{C_1} \right\} \le e^{-\gamma a n(\log n)^{C_1}} \left(E[e^{\gamma t_1}] \right)^{w'n}.
\end{split}
\end{align}

Now, using the fact that $n \ge 2^{c\size}$,  for sufficiently large $\size$, and $C_1 > 1/2$, the above expression is bounded above by $\exp(-n)$. Now, the inequality in (\ref{eq:2.29}) follows. To prove (\ref{eq:2.30}) and (\ref{eq:2.31}) we note that
\begin{align} \label{eq:2.34}
\begin{split}
0 &\le a_{0,n} - \hat{a}_{0,n} \le T(\hat{\pi}_n) - \hat{T}(\hat{\pi}_n) \\
&= \sum_{v\in \hat{\pi}_n} \left\{t(v) - \hat{t}(v) \right\} \le \sum_{v\in \hat{\pi}_n} t(v)1_{\{t(v) > \frac{4d}{\gamma}\log n\}}.
\end{split}
\end{align}
If $\hat{\pi}_n \subseteq [-n(\log n)^{C_1}, n(\log n)^{C_1}]^d$, then the last term of (\ref{eq:2.34}) is at most 
\begin{align*}
\sum_{v\in [-n(\log n)^{C_1},n(\log n)^{C_1}]^d} t(v) 1_{\{t(v)>\frac{4d}{\gamma}\log n\}}.
\end{align*}
Hence,
\begin{align} \label{eq:2.35}
\begin{split}
P\{|a_{0,n}-\hat{a}_{0,n}| \ge x\} 
\le \  &P\{\hat{\pi}_n \not \subseteq [-n(\log n)^{C_1},n(\log n)^{C_1}]^d\} \\
     &+ P\left\{\sum_{i=1}^{M'} t_i 1_{\{t_i > \frac{4d}{\gamma}\log n\}} \ge x \right\},
\end{split}
\end{align}
where 
\begin{align} \label{eq:2.36}
M' = \mbox{number of nodes in } [-C_1n\log n,C_1n\log n]^d \sim  d(2C_1n\log n)^d.
\end{align}
Therefore, we have
\begin{align} \label{eq:2.37}
\begin{split}
P&\left\{\sum_{i=1}^{M'}t_i 1_{\{t_i > \frac{4d}{\gamma}\log n\}} \ge x \right\} \\ 
&\le e^{-(\gamma/2)x}\left[1+\int_{y\ge 4(d/\gamma)\log n} (e^{(\gamma/2)y}-1)F(dy) \right]^{M'} \\
&\le e^{-(\gamma/2)x}\left[1+e^{-2d\log n}\int e^{\gamma y}F(dy) \right]^{M'} \\
& \le \exp \left\{-(\gamma/2)x + M'n^{-2d}\int e^{\gamma y}F(dy) \right\}\\
& \le C_3e^{-(\gamma/2)x},
\end{split}
\end{align}
we now have that (\ref{eq:2.30}) follows from (\ref{eq:2.35}), (\ref{eq:2.29}) and (\ref{eq:2.37}); and
(\ref{eq:2.31}) follows from (\ref{eq:2.30})  and by the additional estimates,
\begin{align*}
0 \le \hat{a}_{0,n} \le a_{0,n}. 
\end{align*}
We also have that  there exists $y_0$ such that for  all $y \ge y_0$, 
\begin{align}\label{eq:2.38}
\begin{split}
P\{|a_{0,n}-\hat{a}_{0,n}| \ge yn\log n\} &\le P\{a_{0,n} \ge yn\log n\} \le P\{a''_{0,n} \ge yn\log n\} \\
&\le e^{-\gamma yn\log n} \left[\int e^{\gamma x}F(dx) \right]^{wn} \le e^{-(\gamma/2) yn\log n},
\end{split} 
\end{align}
where we have used the fact that replacing the $t_i$'s belonging to the optimal path with an arbitrary set of flipping times can only increase the probability of this event.
\end{proof}
Now we can prove (\ref{eq:1.15}). By Lemma~\ref{Lemma:lemma1} we have for $x\sqrt{n} \ge 2C_4$,
\begin{align*}
P\{ |a_{0,n} - \mathbb{E}[a_{0,n}]| \ge x\sqrt{n} \} \le P\left\{|a_{0,n} - \hat{a}_{0,n}| \ge \frac{x}{4}\sqrt{n} \right\} \\  +  P\left\{|\hat{a}_{0,n} - \mathbb{E}[\hat{a}_{0,n}]| \ge \frac{x}{4}\sqrt{n} \right\} \\
\le 3\exp(-C_2n) + C_3\exp\left(-\frac{\gamma}{8}x\sqrt{n}\right) \\ + P\left\{|\hat{a}_{0,n} - \mathbb{E}[\hat{a}_{0,n}]| \ge \frac{x}{4}\sqrt{n} \right\}.
\end{align*}
The first term in the right hand side is at most $3\exp(-C_2x)$ for $x\le n$. Hence if we prove the following, the proof of (\ref{eq:1.15}) is complete.
\begin{align}\label{eq:2.39}
P\left\{|\hat{a}_{0,n}-\mathbb{E}[\hat{a}_{0,n}]| \ge \frac{x}{4}\sqrt{n}\right\} \le C_7 \exp(-C_8x/\sqrt{\log n})  \ \ \mbox{for} \ \ x\le n.
\end{align}
The remaining part of the proof deduces (\ref{eq:2.39}) from Theorem~\ref{Thrm:Kesten_Azuma}. As in (\ref{eq:2.1})-(\ref{eq:2.3}),
\begin{align*}
\hat{a}_{0,n} - \mathbb{E}[\hat{a}_{0,n}] = \sum_{k=1}^\infty \hat{\Delta}_k
\end{align*}
with
\begin{align*}
\hat{\Delta}_k = \mathbb{E}[\hat{a}_{0,n}|\mathscr{F}_k] - \mathbb{E}[\hat{a}_{0,n}|\mathscr{F}_{k-1}].
\end{align*}
Moreover,
\begin{align*}
Z_0 = 0, \ \ Z_l = \sum_{k=1}^l \hat{\Delta}_k, \ \ l\ge 1,
\end{align*}
defines a martingale. We shall now verify the hypotheses of Theorem~\ref{Thrm:Kesten_Azuma} for this  martingale.
Note that replacing $t(v_i)$, $a_{0,n}$ and $\Delta_k$ by $\hat{t}(v_i)$, $\hat{a}_{0,n}$ and $\hat{\Delta}_k$ merely amounts to changing the distribution $F$ to 
\begin{align*}
\hat{F}(x) = F(x) \vee 1_{\{x\ge \frac{4d}{\gamma}\log n\}}.
\end{align*}
Therefore, by (\ref{eq:2.12}), (\ref{eq:2.8}) and the definition (\ref{eq:2.28}) of $\hat{t}$, we have
\begin{align*}
|\hat{\Delta}_k| \le 2\max (\mbox{supp }   \hat{F}) \le \frac{8d}{\gamma} \log n.
\end{align*}
 This corresponds to (\ref{eq:1.24}) by letting
\begin{align*}
c = \frac{8d}{\gamma}\log n.
\end{align*}
Furthermore, by (\ref{eq:2.23}), we have
\begin{align*}
\mathbb{E}[\hat{\Delta}_k^2 | \mathscr{F}_{k-1}] &\le \left(2\int x^2 \hat{F}(dx)\right) P\{v_k \in \hat{\pi}_n(\omega)| \mathscr{F}_{k-1}\} \\
&\le \left(2\int x^2 F(dx)\right) P \{v_k \in \hat{\pi}_n(\omega)| \mathscr{F}_{k-1}\}.
\end{align*}
Thus (\ref{eq:1.25}) holds with
\begin{align*}
U_k = D\hat{J}_k,
\end{align*}
where
\begin{align*}
D = 2\int x^2 F(dx), \ \ \ \hat{J}_k(w) = 1_{\{v_k \in \hat{\pi}_n(\omega)\}}.
\end{align*}

We next let  $C>0$ as in Proposition~\ref{Prop:mod_kesten} and
\begin{align}\label{eq:2.40}
x_0 = n\log n \frac{2D}{\gamma C} \log \left\{\int e^{\gamma u} F(du) \right\}.
\end{align}
Clearly this satisfies (\ref{eq:1.26}) for sufficiently large  $n$. Finally, we verify (\ref{eq:1.27}). We have
\begin{align}
\begin{split}
\sum_{k=1}^\infty U_k &= D\sum_{k=1}^\infty 1_{\{v_k \in \hat{\pi}(\omega)\}} \\
&= D|\hat{\pi}_n(\omega)| = D \times \mbox{length of } \hat{\pi}_n(\omega).
\end{split}
\end{align}
Moreover, as in (\ref{eq:2.32}) and (\ref{eq:2.33}), we have
\begin{align}\label{eq:2.42}
\begin{split}
P\{|\hat{\pi}_n(\omega)| \ge y\} \le P&\{\hat{a}_{0,n} \ge Cy\} \\
&+ P\{\exists \text{ self-avoiding path $r$ starting at $0$} \\ 
&\text{ of at least $y$ steps and with $\hat{T}(r) < Cy$}\} \\
& \le e^{-\gamma Cy}\left[\int e^{\gamma u}F(du) \right]^{wn} + C_9 \exp(-C_{10}y/{(\log n)^{C_{11}}}).
\end{split}
\end{align}
For $y\ge x_0/D$ and $x_0$ as in (\ref{eq:2.40}), the right-hand side of (\ref{eq:2.42}) is at most
\begin{align*}
e^{-(\gamma/2)Cy}+C_9e^{-C_{10}y/{(\log n)^{C_{11}}}}.
\end{align*}
Therefore, (\ref{eq:1.27}) holds with $C_1 = (1+C_9)$ and 
\begin{align*}
C_2 = \frac{\gamma C}{2D} \wedge \frac{C_{10}}{D(\log n)^{C_{11}}}.
\end{align*}
Thus, by (\ref{eq:1.29}) (applied to $\hat{a}_{0,n}$ and to $-\hat{a}_{0,n}$), for $n\ge 2^{c\size}$, $x\le n$ when $\size$ is sufficiently large, we have
\begin{align*}
P\left\{|\hat{a}_{0,n}- \mathbb{E}[\hat{a}_{0,n}]| \ge x \right\} &\le 2C_3(1+C_1)(1+\frac{C_1}{C_2 x_0})\exp(-\frac{C_4}{2}\frac{x}{\sqrt{x_0}}) \\
&\le \exp\left(-C_{12}\frac{x}{\sqrt{n\log n}}\right),
\end{align*}
which proves (\ref{eq:1.15}) for our process on $G_w$.
\end{proof}





\section{Proof of the Shape Theorem} \label{Sec:Shape_theorem}

\

 Throughout this section, we continue working with the modified model introduced in Section~\ref{Sec:Concentration}. The coupling used to obtain the final result for the original model occurs in the last steps of the proof in this section.

\

\noindent \textit{Notational Conventions.} Given a subset $A$ of $\mathbb{R}^2$, and $t \in \mathbb{R}$, we let
$tA = \{ ta \  | \  a \in A\}$, and $AB = \{a + b \  | \  a \in A, b \in B\}$. So in particular $A^n = \{ a_1 + \ldots + a_n \  |  \  a_i \in A \}$.

\ 

We introduce the following definition from Tessera~\cite{tessera2014speed} with some modifications. 

\begin{definition}[Strong Asymptotic Geodesicity  (SAG) for $\mathbb{E}{[}a{]}$] Let $Q: \mathbb{R}_+ \rightarrow \mathbb{R}_+$ be an increasing function such that 
\begin{align*}    
\lim_{\alpha \rightarrow \infty}Q(\alpha) = \infty.
\end{align*}
Let $c'_1\in(0,c_1)$ be a constant and let $\bar{A}(0,r) = \left\{x \  | \  \mathbb{E}[a_{0,x}]  \le r \right\}$ and $[\bar{A}]_t$ denote the $t$-neighborhood of the subset $\bar{A}$ with respect to $\mathbb{E}[a]$. $\mathbb{E}[a]$ is called $SAG(Q)$ when for all integers $m\ge 1$, and for all $x,y \in \mathcal{N}_v$ such that $ \mathbb{E}[a_{x,y}]/m \ge 2^{c'_1\size}$, there exists a sequence $x=x_0,\ldots.,x_m=y$ in $\mathcal{N}_v$ satisfying, for all $0\le i \le m-1$,
\begin{align}\label{eq:Tess_EQSAG1}  
\alpha\left(1 - \frac{1}{Q(\alpha)} \right) \le  \mathbb{E}[a_{x_i,x_{i+1}}] \le \alpha\left(1 + \frac{1}{Q(\alpha)} \right),
\end{align} 
where $\alpha	= \mathbb{E}[a_{x,y}]/m$; and for all $r \in [2^{c'_1\size},2^{c_2\size}] $, we have
\begin{align}\label{eq:Tess_EQSAG2} 
\bar{A}\left(0,\left(1+\frac{1}{Q(r)} \right)r \right) \subset [\bar{A}(0,r)]_{\frac{6r}{Q(r)}}.
\end{align}
\end{definition}

\begin{proposition*}\label{Prop:Tess1}
Let $\ti \in [2^{c_1\size},2^{c_2\size}] $ and $$A'_F(0,\ti) = \{ x \given a_{0,x} \le \ti  \}.$$
Almost surely, there exists $\size_0 \in \mathbb{N}$, such that for $\size \ge \size_0$,  there exist a norm $l_*$ on $\mathbb{R}^2$, and $c>0$, such that,
\begin{align}\label{eq:Tess1}
B_{l_*}(0,\ti-\size^c \ti^{1/2}\log^{3/2} \ti)\cap \mathbb{Z}^2 \subset A'_{F}(0,\ti)  \\ \subset B_{l_*}(0,\ti+\size^c \ti^{1/2}\log^{3/2} \ti). \nonumber
\end{align}
\end{proposition*}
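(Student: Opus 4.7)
The plan is to follow the abstract framework developed by Tessera~\cite{tessera2014speed} for FPP-type shape theorems, adapted to our setting where the concentration bound of Theorem~\ref{Thrm:Theorem_Concentration} plays the role that Talagrand's inequality plays in Tessera's work. The argument splits into three steps: (i) construct the asymptotic norm $l_*$ from subadditivity of $\mathbb{E}[a]$; (ii) verify the strong asymptotic geodesicity condition SAG$(Q)$ for a suitable slowly growing $Q$; and (iii) transfer inclusions in the $\mathbb{E}[a]$-metric to inclusions in the $l_*$-norm using Theorem~\ref{Thrm:Theorem_Concentration}, then deduce the two-sided containment.

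First, I would establish the norm $l_*$. Subadditivity $a_{0,x+y} \le a_{0,x}+a_{x,x+y}$ holds in the modified model because, once an affected node is present at $x$, the construction of $\mathcal{P}$ guarantees a sequence of flips extending to any target. Combined with the integrability guaranteed by Theorem~\ref{Thrm:Theorem_Concentration} and the exponential moment assumption (\ref{eq:1.14}) on $F$, a direct averaging argument (or Kingman's subadditive ergodic theorem applied to the underlying i.i.d.\ flipping-time field) shows that $\mathbb{E}[a_{0,nx}]/n$ converges to a deterministic limit $\mu(x)$ along each rational direction. Positive homogeneity is automatic, the triangle inequality follows from subadditivity, and symmetry follows from the symmetry of the neighborhood structure in $G_w$. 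Extending $\mu$ by continuity yields the norm $l_*$ on $\mathbb{R}^2$, so $\bar{A}(0,r)$ re-scaled converges to the unit $l_*$-ball.

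Second, I would verify SAG$(Q)$ with $Q(\alpha)$ of order a small power of $\log\alpha$. For (\ref{eq:Tess_EQSAG1}), given $x,y$ with $\mathbb{E}[a_{x,y}]/m=\alpha\ge 2^{c_1'N}$, I pick an interpolating sequence $x=x_0,\ldots,x_m=y$ along an almost-geodesic of $l_*$ with equally spaced $l_*$-distances. The convergence of $\mathbb{E}[a_{0,nz}]/n$ to $l_*(z)$ together with Theorem~\ref{Thrm:Theorem_Concentration} (which controls fluctuations of $a_{x_i,x_{i+1}}$ around their means on the scale $\sqrt{\alpha\log\alpha}$) yields the $(1\pm 1/Q(\alpha))$ estimate on each segment for $Q(\alpha)\asymp \sqrt{\alpha}/(N^c\sqrt{\log\alpha})$, which is $\to\infty$ in the regime $\alpha\ge 2^{c_1'N}$. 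The nesting condition (\ref{eq:Tess_EQSAG2}) is derived analogously, by comparing $\bar{A}(0,(1+1/Q(r))r)$ to $\bar{A}(0,r)$ along radial interpolations and using the concentration bound to absorb the fluctuations into a $6r/Q(r)$-neighborhood.

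Third, once SAG$(Q)$ holds, Tessera's machinery produces inclusions of the form
\begin{align*}
\bar{A}(0, r - C r/Q(r)) \subset \bar{A}(0,r) \subset \bar{A}(0, r + C r/Q(r))
\end{align*}
between the $\mathbb{E}[a]$-ball and the $l_*$-ball of the same radius. To replace $\bar{A}$ by $A'_F(0,t)$, I combine the SAG conclusion with Theorem~\ref{Thrm:Theorem_Concentration} applied at each lattice point $x$ in the relevant annulus: with $\lambda = N^{c}\sqrt{t\log t}$, a point with $\mathbb{E}[a_{0,x}]\le t-\lambda$ lies in $A'_F(0,t)$ except on a set of probability $\exp(-c'\lambda/\sqrt{\|x\|\log\|x\|})$, and similarly for the outer inclusion. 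Since the number of candidate lattice points is polynomial in $t$ while the individual failure probabilities are stretched-exponentially small in $\lambda/\sqrt{t\log t}$, a Borel--Cantelli argument along $t=2^k$ (plus monotonicity between dyadic scales) upgrades these bounds to the almost sure two-sided containment with error $N^c t^{1/2}\log^{3/2} t$, where the extra $\log^{1/2}t$ factor absorbs the $\sqrt{\log t}$ inside the concentration bound.

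The principal difficulty is the verification of SAG$(Q)$: our concentration bound only yields $\sqrt{t\log t}$-scale fluctuations rather than the $\sqrt{t}$-scale that Tessera assumes, so the argument must be carried out with a $Q$ that grows strictly slower than in the FPP case, and the resulting error exponent becomes $\log^{3/2} t$ instead of $\log t$; this is exactly the factor appearing in (\ref{eq:Tess1}). A secondary subtlety is that our modified process is not strictly translation-invariant — the notion of an affected node depends on the initial configuration — so before invoking the subadditive ergodic theorem and Theorem~\ref{Thrm:Theorem_Concentration} for arbitrary start and end points, I must verify that the passage times $a_{x,y}$ indeed satisfy the translation-invariance required, using the fact that we conditioned on $\mathcal{N}_v$ being a region of expansion with no $\theta$ or $\bar\theta$-affected nodes so that $\mathcal{P}$ is non-empty uniformly in the base point $x$.
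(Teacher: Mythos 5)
Your overall strategy (Tessera's SAG framework driven by the concentration bound of Theorem~\ref{Thrm:Theorem_Concentration} in place of Talagrand, plus a union bound/Borel--Cantelli step to pass from $\bar{A}(0,t)$ to $A'_F(0,t)$) is the same as the paper's, and your third step essentially reproduces Proposition~\ref{Prop:Tess_fluc}. But your first step contains a genuine gap. You construct the norm $l_*$ by asserting subadditivity $a_{0,x+y}\le a_{0,x}+a_{x,x+y}$ and invoking Kingman's subadditive ergodic theorem, and then your verification of SAG$(Q)$ interpolates along ``almost-geodesics of $l_*$,'' which makes the argument circular if $l_*$ is not already in hand. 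The problem is that subadditivity fails here: $a_{x,u}$ is defined by \emph{placing an affected* block at $x$}, whereas completing the passage from $0$ to $x$ only produces a single affected node at $x$. An affected node does not restart the process the way an affected* block does, so concatenation is not free. This is exactly why the paper states, in the proof of Proposition~\ref{Prop:SAG_to_shape}, that ``we do not have triangular inequality for $\mathbb{E}[a]$'' and why it cannot build $l_*$ by subadditivity; instead the norm is \emph{extracted from} the SAG property via Tessera's Propositions~3.1, 4.1 and a modified Lemma~5.1.

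Concretely, the work you are skipping is the content of Lemma~\ref{Lemma:pre_expandable} and the middle of the proof of Proposition~\ref{Prop:Tess_avg_SAG}: one takes a point $z$ on the random optimal path from $x$ to $y$ splitting the passage time in ratio $\lambda:(1-\lambda)$, shows via the balance property that the affected node at $z$ can be upgraded to an affected $w$-block at polynomial extra cost (so $a^*_{z,y}\le a_{z,y}+\size^5$), and separately shows $a^*_{z,y}\ge a_{z,y}-\size^c r_w^{1/2}\log^{3/2}r_w$ by ruling out interference from affected nodes outside a neighborhood of $z$. Only after this approximate concatenation inequality is established can one verify (\ref{eq:Tess_EQSAG1})--(\ref{eq:Tess_EQSAG2}) and then obtain $l_*$ as an output of the SAG machinery. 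You flag translation-invariance as a ``secondary subtlety'' to be checked at the end, but the failure of exact subadditivity (and of stationarity, given the conditioning on $\mathcal{N}_v$ being a region of expansion with no affected nodes) is in fact the central obstruction, and your proposal as written does not overcome it.
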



The proof of this proposition, as outlined in Section~\ref{Sec:Model}, consists of two main parts. We now start by preparing the stage for the first part of the proof.

Let $\bar{d}_1$ and $\bar{d}_2$ be the metrics defined by the expected value of first passage times in the first and second intermediate FPPs when we choose the nodes to be on $G_w$ defined on $\mathbb{Z}^2$, i.e., 
$$\bar{d}_1 = \mathbb{E}[a'_{0,n}], \bar{d}_2 = \mathbb{E}[a''_{0,n}]  .$$
 Let $x,y$ denote two arbitrary nodes on $\mathcal{N}_v$ and $d_{l_2}$ denote the Euclidean metric. Since in the modified model we are assuming that $\mathcal{N}_v$ is a region of expansion and there are no affected nodes of any type on $\mathcal{N}_v$, it is easy to see that there exists $c,c' >0$ such that for sufficiently large $\size$ we can write
\begin{align}\label{eq:bilips}
 d_{l_2}(x,y)/\size^c \le \bar{d}_1(0,\xi_1\lfloor\|x-y\|_1/\size^{c'}\rfloor)  \le \mathbb{E}[a_{x,y}] \nonumber  \\
  \le \bar{d}_2(0,\xi_1\lceil\|x-y\|_1/\size^{c'}\rceil) \le \size^c d_{l_2}(x,y).
\end{align}

\begin{proposition*}[Tessera -- Modified]\label{Prop:Tess_fluc}
Let $2^{c'_1\size} \le r_w \le 2^{c_2\size}$ be a sequence where $c'_1, c_2$ are as defined before, and $\mathcal{N}_{r_w}$ be a sequence of neighborhoods with radii $r_w$ centered at the origin.
Almost surely, there exists $C>0$ and $w_0 \in \mathbb{N}$ (hence $\size_0 \in \mathbb{N}$)  such that for $w \ge w_0$ (hence $\size \ge \size_0$), we have
\begin{align}\label{eq:Tess_prop1}
\sup_{x,y\in \mathcal{N}_{r_w}}|a_{x,y}-\mathbb{E}[a_{x,y}]| \le Cr_w^{1/2}\log^{3/2} r_w.
\end{align}
\end{proposition*}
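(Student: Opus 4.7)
The plan has three ingredients: apply the pointwise concentration bound of Theorem~\ref{Thrm:Theorem_Concentration} to each pair $(x,y)\in \mathcal{N}_{r_w}^2$, combine these bounds via a union bound over the $O(r_w^4)$ pairs, and upgrade the resulting probability estimate to an almost sure statement via the first Borel-Cantelli lemma. The modified model is translation-invariant in the sense that the location of the affected* block is arbitrary, so $a_{x,y}$ has the same distribution as $a_{0,y-x}$ and Theorem~\ref{Thrm:Theorem_Concentration} applies with $u$ replaced by $y-x$.

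Set $\lambda_w := C\, r_w^{1/2}\log^{3/2} r_w$ with a constant $C$ to be chosen. Since $r_w\ge 2^{c_1\size}$, one can fix $c\in(0,c_1/2)$ so that $2^{c\size}\le \lambda_w$ for all sufficiently large $w$. For any pair with $\lambda_w \le \|y-x\|_\infty \le 2 r_w$, the hypotheses $\lambda_w \le \|y-x\|$ and $\|y-x\|\ge 2^{c\size}$ of Theorem~\ref{Thrm:Theorem_Concentration} are met, so using the monotonicity of $s\mapsto \sqrt{s\log s}$ gives
\begin{align*}
P\bigl(|a_{x,y}-\mathbb{E}[a_{x,y}]| \ge \lambda_w\bigr)
\le \exp\!\bigl(-c'\lambda_w/\sqrt{2 r_w\log(2r_w)}\bigr)
\le r_w^{-c'C/\sqrt{2}}.
\end{align*}
A union bound over the $O(r_w^4)$ such pairs yields a failure probability of $O(r_w^{4-c'C/\sqrt{2}})$. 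Choosing $C$ so that $c'C/\sqrt{2}>6$, and recalling $r_w\ge 2^{c_1\size}$ with $\size=(2w+1)^2\to\infty$, this is bounded by $O(2^{-2c_1\size})$, which is summable in $w$. The first Borel-Cantelli lemma then yields the desired bound almost surely over all such pairs and all sufficiently large $w$.

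For pairs with $\|y-x\|_\infty<\lambda_w$, I would use an analogous concentration inequality without the restriction $\|u\|\ge 2^{c\size}$ built into Theorem~\ref{Thrm:Theorem_Concentration}. Inspecting its proof, this restriction enters only through Proposition~\ref{Prop:mod_kesten}; replacing the latter by Kesten's original Proposition~\ref{Prop:Kesten5.8} (which imposes no such restriction) yields, with only cosmetic changes of constants, the estimate $P(|a_{x,y}-\mathbb{E}[a_{x,y}]|\ge \lambda)\le \exp(-c''\lambda/\sqrt{\|y-x\|\log \|y-x\|})$ valid once $\|y-x\|$ exceeds an absolute constant. Applying this with $\lambda=\|y-x\|$ for each close pair shows that the fluctuation is at most $\|y-x\|<\lambda_w$ with probability $1-\exp(-\Omega(\sqrt{\|y-x\|/\log \|y-x\|}))$; a dyadic union bound over scales $\|y-x\|\in[2^{k},2^{k+1})$, combined with another application of Borel-Cantelli, disposes of these pairs. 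Only $O(1)$-close pairs remain, and for these both $a_{x,y}$ and $\mathbb{E}[a_{x,y}]$ are $O(1)$ by the exponential moment condition~(\ref{eq:1.14}), so their difference is trivially $\ll \lambda_w$. The main obstacle in the whole argument is precisely reconciling the constraint $\|u\|\ge 2^{c\size}$ in Theorem~\ref{Thrm:Theorem_Concentration} with the desire for a uniform estimate at all separations; once this is handled by invoking Proposition~\ref{Prop:Kesten5.8} at small scales, the rest reduces to a standard union-bound / Borel-Cantelli computation in which the exponential growth of $r_w$ in $\size$ leaves ample room to tune $C$ and ensure summability.
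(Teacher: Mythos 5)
Your overall skeleton --- pointwise concentration from Theorem~\ref{Thrm:Theorem_Concentration}, a union bound over the $O(r_w^4)$ pairs, and Borel--Cantelli in $w$ --- is exactly the paper's route, and your treatment of well-separated pairs (those with $\|y-x\|\ge 2^{c\size}$) is correct: there the theorem gives a per-pair failure probability of order $r_w^{-c'C/\sqrt{2}}$, and choosing $C$ large makes this summable after the union bound, just as the paper does with $C_3=6/c'$.

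The genuine gap is in your handling of pairs with $\|y-x\|<2^{c\size}$. First, the claim that the restriction $\|u\|\ge 2^{c\size}$ ``enters only through Proposition~\ref{Prop:mod_kesten}'' and can be removed by falling back on Kesten's original Proposition~\ref{Prop:Kesten5.8} is unsound: the constants $B,C,D$ in that proposition depend on the dimension $d$, and here $d=\log_2\size+2\to\infty$; the whole point of the modified proposition (with its $(\log n)^E$ loss and the hypothesis $n\ge 2^{c\size}$) is to cope with this divergence, so no small-scale analogue with $\size$-uniform constants comes for free. Second, and more fatally, even if such an inequality held, applying it with $\lambda=\|y-x\|=s$ gives a per-pair failure probability $\exp(-\Omega(\sqrt{s/\log s}))$, which for $s=O(1)$ or $s=\mathrm{poly}(\size)$ is a \emph{constant}, while the number of pairs at that separation inside $\mathcal{N}_{r_w}$ is at least $|\mathcal{N}_{r_w}|\gtrsim 2^{2c_1\size}$; your dyadic union bound therefore diverges at small scales. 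The correct (and much simpler) argument for close pairs --- what the paper compresses into ``standard concentration bounds'' --- does not use concentration around the mean at all. Since the target deviation $\lambda_w\ge 2^{c_1\size/2}$ vastly exceeds any plausible value of $a_{x,y}$ when $\|x-y\|<2^{E\size}$ with $E<c_1/2$, one bounds $a_{x,y}$ absolutely: by the coupling with the second intermediate FPP, $a_{x,y}\le\sum_{i=1}^{w'\|x-y\|}t_i$, and the exponential moment condition (\ref{eq:1.14}) gives $P(a_{x,y}\ge\lambda_w)\le e^{-\gamma\lambda_w}\left(\mathbb{E}[e^{\gamma t_1}]\right)^{w'2^{E\size}}\le e^{-\gamma\lambda_w/2}$, while $\mathbb{E}[a_{x,y}]\le w'2^{E\size}\mathbb{E}[t_1]\ll\lambda_w$ makes the lower deviation impossible. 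This doubly-exponentially small bound survives the union bound over all close pairs at once, with no dyadic decomposition needed.
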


To prove this proposition, we first need the following lemma. 
Let $c_1,c_2,c_2'$ be as defined in Section~\ref{Subsec:mainresults} and $c \in [c_1,c_2]$ be a constant.
Let $A_1$ be the event that for every pair of nodes in  $\mathcal{N}_{r_w}$, the first passage time is at most $2^{c'_2\size}$. To see that this event occurs w.h.p., let $x,y \in \mathcal{N}_{r_w}$, we first note that  
\begin{align*}
P(a_{x,y} > 2^{c'_2\size} ) \le P(a''_{x,y} > 2^{c'_2\size}),
\end{align*}
where $a''$ is the first passage time of the second intermediate FPP. Now it is easy to see that standard concentration bounds imply that there exists a constant $C'$ such that the last term in the above inequality is at most $\exp(-C'2^{c'_2\size})$. It follows that event $A_1$ occurs w.h.p.

\begin{definition}[Optimal Path] 
A path from node $x$ to node $y$ is called \textit{optimal} if its passage time is equal to the first passage time between these nodes.
\end{definition}

\begin{lemma*}\label{Lemma:lemma1'} 
Let $\mathcal{N}_{r_w}$ be a neighborhood with radius $r_w$ centered at the origin.
If (\ref{eq:1.1}) and (\ref{eq:1.14}) hold, then conditional on event $A_1$ as defined above, there exist constants $0 < C_1,C_2 < \infty$ such that for all $x,y\in \mathcal{N}_{r_w}$
\begin{align}\label{eq:2.29'}
P\{ \pi_{x,y} \not \subseteq \mathcal{N}'_{r_w(\log r_w)^{C_1}} \} \le e^{-C_2{r_w}},
\end{align}
where $\pi_{x,y}$ denotes the optimal path between $x,y$ and $\mathcal{N}'_{r_w(\log r_w)^{C_1}}$ denotes a neighborhood with radius $r_w(\log r_w)^{C_1}$ centered at the origin.
\end{lemma*}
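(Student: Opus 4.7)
The plan is to mirror the proof of Lemma~\ref{Lemma:lemma1} while carefully tracking an extra factor of $w$ that appears when translating between spatial displacement in $l_\infty$ and the number of vertices in a path of the modified model.

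\emph{Step 1 (geometric reduction).}  Any path $\pi = v_1 v_2 \cdots v_k$ in the modified model is $w$-connected: since $v_j$ becomes p-stable only because the set of already-flipped particles meets the $w$-neighborhood of $v_j$, we have $\|v_j - v_i\|_\infty \le w$ for some $i < j$, unless $v_j$ lies within $w/2$ of $x$, where the initial affected* block is placed. Consequently, a $k$-vertex path rooted at $x\in\mathcal{N}_{r_w}$ is contained in the $l_\infty$-ball of radius $r_w + w/2 + kw$ around the origin, so for $r_w$ large enough,
\begin{align*}
\bigl\{\pi_{x,y}\not\subseteq\mathcal{N}'_{r_w(\log r_w)^{C_1}}\bigr\} \;\subseteq\; \bigl\{|\pi_{x,y}|\geq M\bigr\}, \qquad M:=\frac{r_w(\log r_w)^{C_1}}{2w}.
\end{align*}

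\emph{Step 2 (two-term decomposition and bounds).}  By translation invariance of the law of the flipping times and of the couplings in Lemmas~\ref{Lemma:firstFPP} and~\ref{Lemma:secondFPP}, the derivation leading to (\ref{eq:2.26_new}) applies with $x$ in place of $0$, yielding, for any $a>0$,
\begin{align*}
P\{|\pi_{x,y}|\geq M\}
\;\leq\; P\{a_{x,y}\geq aM\}
+ P\bigl\{\exists\text{ self-avoiding path from $x$ with $\geq M$ steps and }T(r)<aM\bigr\}.
\end{align*}
For the first term, Lemma~\ref{Lemma:secondFPP} dominates $a_{x,y}$ by the sum of the flipping times of the $O(w r_w)$ particles in a block-path from $x$ to $y$, so the Cramér bound using (\ref{eq:1.14}) gives
\begin{align*}
\log P\{a_{x,y}\geq aM\}
\;\leq\; -\gamma a M + O(w r_w)
\;=\; r_w\Bigl[-\frac{\gamma a(\log r_w)^{C_1}}{2w} + O(w)\Bigr]
\;\leq\; -2C_2 r_w
\end{align*}
for $r_w$ large, provided $C_1>1$, since $w = O(\sqrt{\log r_w})$. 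For the second term, Lemma~\ref{Lemma:firstFPP} bounds the probability by the analogous event in the first intermediate FPP, and Proposition~\ref{Prop:mod_kesten} (applied with a small enough $a<B$) yields
\begin{align*}
C\exp\Bigl(-\frac{D M}{(\log M)^{E}}\Bigr)
\;\leq\; C\exp\Bigl(-\frac{D\,r_w(\log r_w)^{C_1-E}}{2w}\Bigr)
\;\leq\; \tfrac{1}{2}\,e^{-C_2 r_w},
\end{align*}
which holds, again using $w=O(\sqrt{\log r_w})$, as soon as $C_1 > E+\tfrac{1}{2}$.

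\emph{Conclusion and main obstacle.}  Fixing $C_1 > \max\bigl(1,\,E+\tfrac{1}{2}\bigr) = E+\tfrac{1}{2}$ and combining the two estimates yields (\ref{eq:2.29'}); the conditioning on $A_1$ is used only to guarantee that $\pi_{x,y}$ is finite and well-defined, the pathwise bounds above being otherwise unconditional. The principal technical point, compared with Lemma~\ref{Lemma:lemma1}, is that here the enlargement is measured in spatial distance rather than in path length: in both the Cramér step and the Kesten-type step an extra factor of $w = \Theta(\sqrt{\log r_w})$ appears, and one must take $C_1$ larger than Kesten's original threshold of $1/2$ by at least $\tfrac{1}{2}$ in order to absorb it.
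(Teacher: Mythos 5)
Your proof is correct and follows exactly the route the paper intends: the paper omits this proof, stating only that it is ``similar to that of Lemma~\ref{Lemma:lemma1},'' and your argument is precisely that adaptation, reducing spatial escape to a lower bound on the number of path vertices and then reusing the two-term decomposition (\ref{eq:2.26_new}) via the second intermediate FPP (Cram\'er bound) and the first intermediate FPP (Proposition~\ref{Prop:mod_kesten}). Your explicit tracking of the factor $w=\Theta(\sqrt{\log r_w})$ from the long-range jumps, and the resulting requirement $C_1>E+\tfrac12$, is a detail the paper glosses over and is handled correctly here.
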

The proof of the above lemma is similar to that of Lemma~\ref{Lemma:lemma1}
 and is omitted. 
\begin{proof}[Proof of Proposition~\ref{Prop:Tess_fluc}]
 Let $A_2$ denote the event that the optimal path connecting any two   nodes in $\mathcal{N}_{r_w}$ is contained in $\mathcal{N}'_{r_w(\log r_w)^{C_1}}$. Using Lemma~\ref{Lemma:lemma1'} this event occurs w.h.p.

Let $C_3$ be a constant to be determined later. Let $w$ be sufficiently large so that $P(A_1)$ is greater than $1/2$. Let $w$ be large enough such that for $\|x-y\| \ge 2^{(c/4)\size}$, (\ref{eq:1.15}) holds. Using (\ref{eq:1.15}) for these nodes and standard concentration bounds for nodes such that $\|x-y\| < 2^{(c/4)\size}$ it is easy to see that for all $x,y$ in $\mathcal{N}_{r_w}$, there exists a constant $C_4$, such that for sufficiently large $\size$,
\begin{align*}
P\left(\left| a_{x,y}- \mathbb{E}[a_{x,y}]  \right|^2 \ge C_3r_w\log^{3} r_w \right) \le 2 \exp(-c'C_4\log r_w).
\end{align*}
 where $c'$ is the constant in (\ref{eq:1.15}). Now letting $C_3=6/c'$, we have that for large enough $w$, all $x,y$ such that $\|x-y\|<r_w$, 
\begin{align*}
P\left(\left|  a_{x,y}- \mathbb{E}[a_{x,y}]  \right|^2 \ge C_3r_w\log^{3} r_w \right) \le 2r_w^{-6}. 
\end{align*}
Hence for $w$ large enough we have
\begin{align*}
P\left( \sup_{x,y\in \mathcal{N}_{r_w}}\left|  a_{x,y}- \mathbb{E}[a_{x,y}]  \right|^2 \ge C_3r_w\log^{3} r_w \right) \le 2r_w^{-6}|\mathcal{N}_{r_w}|^2 \\ \le 16r_w^{-2}\le 16w^{-2}.
\end{align*}
Using the second Borel-Cantelli lemma, the result follows from the fact that event $V$ defined in Section~\ref{Sec:Concentration} occurs w.h.p. and $\sum_{w=1}^{\infty} w^{-2} < \infty$.
\end{proof}

Now let $\ti \in [2^{c_1\size},2^{c_2\size}] $. It follows from Proposition~\ref{Prop:Tess_fluc} that almost surely there exists $C'$, such that for sufficiently large $\size$, we have
\begin{align}\label{eq:Tess_avg}
\bar{A}(0,\ti-C'\ti^{1/2}\log^{3/2} \ti) \subset A'_F(0,\ti) \subset \bar{A}(0,\ti+C'\ti^{1/2}\log^{3/2} \ti).
\end{align}
We now  focus on the second part of the proof of Proposition~\ref{Prop:Tess1}.

\begin{theorem*}\label{Thrm:Tess_avg_distance} Let $\ti \in [2^{c_1\size},2^{c_2\size}] $. 
Almost surely, there exist $c>0$ and $\size_0$ such that, for every $\size > \size_0$, there exists a norm $l_*$ such that, 
\begin{align}\label{eq:Tess2}
B_{l_*}(0, \ti-\size^c \ti^{1/2}\log^{3/2} \ti)\cap \mathbb{Z}^2 \subset \bar{A}(0,\ti) \subset B_{l_*}(0,\ti+\size^c \ti^{1/2}\log^{3/2} \ti).
\end{align} 
\end{theorem*}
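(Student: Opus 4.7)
The plan is to adapt the approach of Tessera~\cite{tessera2014speed} developed for first passage percolation: define the limiting norm $l_*$ via a subadditive limit of expected passage times, establish the SAG property for $\mathbb{E}[a]$ using the fluctuation bound from Proposition~\ref{Prop:Tess_fluc}, and then derive the two-sided inclusion (\ref{eq:Tess2}) from SAG via an argument that subdivides near-optimal paths into segments of comparable expected passage time.

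I would first construct the candidate norm $l_*$. Subadditivity $a_{x,z} \le a_{x,y}+a_{y,z}$ holds in the modified model because placing an affected* block at $x$ and executing a realizing sequence of flips that makes $y$ affected can be followed by a further realizing sequence that makes $z$ affected; combined with translation invariance inherited from the i.i.d. flipping times, this makes $\mathbb{E}[a_{0,nx}]$ subadditive in $n$. The usual subadditive argument yields $\mu(x) := \lim_{n \to \infty} \mathbb{E}[a_{0,nx}]/n$, which is positively homogeneous. The triangle inequality $\mu(x+y) \le \mu(x) + \mu(y)$ follows from subadditivity plus translation invariance, and strict positivity for $x \ne 0$ comes from the lower bound in (\ref{eq:bilips}). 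One extends $\mu$ to $\mathbb{Q}^2$ by homogeneity and to $\mathbb{R}^2$ by continuity using the bilipschitz control in (\ref{eq:bilips}), defining $l_* := \mu$ as a genuine norm on $\mathbb{R}^2$.

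Next I would verify SAG for $\mathbb{E}[a]$ for a suitable slowly growing $Q$. For (\ref{eq:Tess_EQSAG1}), given $x,y$ with $\alpha = \mathbb{E}[a_{x,y}]/m \ge 2^{c'_1 N}$, I would take $x_0=x, x_1,\ldots, x_m=y$ approximately equispaced along the Euclidean segment from $x$ to $y$. The bilipschitz sandwich (\ref{eq:bilips}) and the fluctuation bound of Proposition~\ref{Prop:Tess_fluc} together force $\mathbb{E}[a_{x_i,x_{i+1}}]$ to lie within $\alpha/Q(\alpha)$ of $\alpha$. For (\ref{eq:Tess_EQSAG2}), one compares $\bar{A}(0,r)$ and $\bar{A}(0,(1+1/Q(r))r)$: any $x$ in the larger ball lies within a quantifiable number of steps of a lattice point in the smaller ball, again controlled by Proposition~\ref{Prop:Tess_fluc}.

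Finally, I would deduce (\ref{eq:Tess2}) from SAG. For $x$ with $l_*(x) \le \ti - N^c \ti^{1/2}\log^{3/2} \ti$, one uses SAG to build a chain $0 = x_0, x_1, \ldots, x_m$ approximating $x$ along the radial direction, each segment having $\mathbb{E}[a_{x_i,x_{i+1}}]$ close to its $l_*$-length; summing and invoking subadditivity gives $\mathbb{E}[a_{0,x}] \le \ti$, i.e.\ $x \in \bar{A}(0,\ti)$. The reverse inclusion is symmetric: starting from a near-optimal SAG chain realizing $\mathbb{E}[a_{0,x}]$, the convergence $\mathbb{E}[a_{x_i,x_{i+1}}]/|x_{i+1}-x_i| \to l_*$ along each segment yields an upper bound on $l_*(x)$ in terms of $\ti$. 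The main obstacle is the quantitative calibration: the rate of convergence of $\mathbb{E}[a_{0,nx}]/n$ to $\mu(x)$ and the cumulative SAG error across all $m$ segments must jointly fit within the target bound $N^c \ti^{1/2}\log^{3/2} \ti$. The choice of $Q$ and $m$ must be delicate enough to absorb the polynomial-in-$N$ constants coming from (\ref{eq:bilips}) without degrading the $\ti^{1/2}\log^{3/2} \ti$ fluctuation scale from Proposition~\ref{Prop:Tess_fluc}; this is what ultimately fixes the exponent $c$ in $N^c$ appearing in the statement.
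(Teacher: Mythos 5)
Your overall architecture (fluctuation bound, then SAG, then a Tessera-style passage from SAG to a limit shape) matches the paper's, which proves the theorem by combining Proposition~\ref{Prop:Tess_avg_SAG} with Proposition~\ref{Prop:SAG_to_shape}. However, the two steps on which your argument actually rests are asserted rather than proven, and both fail as stated. First, the subadditivity $a_{x,z}\le a_{x,y}+a_{y,z}$ does not hold in the modified model: $a_{y,z}$ is defined by placing an entire affected* $w$-block at $y$, whereas completing the first leg only delivers a single affected node at $y$, which cannot be concatenated with a realizing sequence for $a_{y,z}$. This is exactly why the paper proves Lemma~\ref{Lemma:pre_expandable} (upgrading a single affected node to an affected block at polynomial-in-$\size$ cost, w.h.p., under the balance property) and why the proof of Proposition~\ref{Prop:SAG_to_shape} explicitly replaces Tessera's Lemma~5.1 ``since we do not have triangular inequality for $\mathbb{E}[a]$.'' Without exact subadditivity (and exact translation invariance, which also fails in the conditioned, bounded modified model), the subadditive ergodic theorem does not give you the norm $\mu$, and the triangle inequality for $\mu$ does not follow.

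Second, your verification of (\ref{eq:Tess_EQSAG1}) by taking Euclidean-equispaced points on the segment from $x$ to $y$ is either circular or too weak. The bilipschitz bound (\ref{eq:bilips}) only controls $\mathbb{E}[a_{x_i,x_{i+1}}]$ up to a multiplicative factor $\size^{O(1)}$, nowhere near the required $1\pm 1/Q(\alpha)$; to do better you would need a quantitative rate of convergence of $\mathbb{E}[a_{0,nx}]/n$ to $\mu(x)$, which is precisely what the SAG machinery is designed to produce, not consume. You flag this calibration issue yourself but do not resolve it. The paper avoids it entirely: in Proposition~\ref{Prop:Tess_avg_SAG} the chain point $z$ is chosen on the realized optimal path at the place where the accumulated passage time equals $\lambda a_{x,y}$, and the delicate part is showing that the residual time $a^*_{z,y}$ agrees with $a_{z,y}$ up to $\size^{c}r_w^{1/2}\log^{3/2}r_w$, which requires the concentration bound of Theorem~\ref{Thrm:Theorem_Concentration}, the FKG inequality, and Lemma~\ref{Lemma:pre_expandable}. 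These two missing ingredients are the substance of the proof, so the proposal as written has a genuine gap.
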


The proof of the above theorem follows from the following results.
We first need the following lemma.

\begin{lemma*}\label{Lemma:pre_expandable}
Consider an arbitrary $10w$-neighborhood denoted by $\mathcal{N}_{10w}$ in the initial configuration of the modified model. Assume that we place an affected* block outside of $\mathcal{N}_{10w}$, and at some time $\ti^*>0$ there is a $\theta$-affected node $u$ at the center of $\mathcal{N}_{10w}$. Then, at time $\ti^*$, w.h.p. there exists a sequence of flips of $\theta$-particles in $\mathcal{N}_{10w}$ such that if they happen there will be a $\theta$-affected $w$-block inside $\mathcal{N}_{10w}$.
\end{lemma*}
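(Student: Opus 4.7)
The plan is to leverage the region of expansion property of $\mathcal{N}$, guaranteed w.h.p.\ in the initial configuration by Lemma~\ref{Lemma:monoch_spread_1} (since $10w \le 2^{c\size}$ for any $c>0$ and sufficiently large $\size$), combined with the hypothesis that $u$ is $\theta$-affected at $\ti^*$.

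The first step is a geometric reduction. Let $\mathcal{B}_0 = \mathcal{N}_{w/2}(u)$ and $\mathcal{R} = \mathcal{N}_{3w/2}(u) \subset \mathcal{N}$. Since $\mathcal{N}(v) \subset \mathcal{R}$ for every $v \in \mathcal{B}_0$, it suffices to produce a legal sequence of $\theta$-particle flips in $\mathcal{N}$ that, starting from the configuration at $\ti^*$, makes every node of $\mathcal{R}$ into state $\bar{\theta}$. Indeed, once $\mathcal{R}$ is monochromatic $\bar{\theta}$, every $v \in \mathcal{B}_0$ has $\mathcal{N}(v)$ containing no $\theta$-particles and is therefore $\theta$-affected, so $\mathcal{B}_0$ is the required $\theta$-affected $w$-block inside $\mathcal{N}$.

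I would then construct the sequence by induction along a chain of overlapping $w$-blocks $\mathcal{B}^{(1)} = \mathcal{B}_0, \mathcal{B}^{(2)}, \ldots, \mathcal{B}^{(k)}$ covering $\mathcal{R}$. Assuming the $\theta$-particles of $\mathcal{B}^{(1)} \cup \cdots \cup \mathcal{B}^{(i)}$ have already been legally flipped, I would pick $\mathcal{B}^{(i+1)}$ so that it overlaps $\mathcal{B}^{(i)}$ along a $w$-block; the region of expansion property applied to this already-all-$\bar{\theta}$ sub-block guarantees that every remaining $\theta$-particle on its outer boundary (which absorbs the new $\theta$-particles of $\mathcal{B}^{(i+1)}$) is p-stable and can thus be flipped, and the flips performed earlier only decrease the $\theta$-density in surrounding neighborhoods, so p-stability is preserved. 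Iterating until the union covers $\mathcal{R}$ completes the construction.

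The main technical obstacle is the base case $\mathcal{B}^{(1)} = \mathcal{B}_0$, since the region of expansion property presupposes an already-monochromatic $\bar{\theta}$ $w$-block and therefore cannot unaided produce the first one. This is where the hypothesis on $u$ is essential: because $u$ is $\theta$-affected at $\ti^*$, the neighborhood $\mathcal{N}(u)$ already contains at least $(1-\tau)\size + 1$ particles in state $\bar{\theta}$, and in particular $u$ itself is p-stable whenever it is currently in state $\theta$. I would handle the base case by combining this local $\bar{\theta}$-surplus with the region of expansion property applied to nested sub-blocks --- flipping the remaining $\theta$-particles of $\mathcal{B}_0$ in order of proximity to $u$ (starting with $u$ itself if applicable) and checking inductively, through a careful counting argument against the thresholds $\tau\size$, that each candidate has accumulated enough $\bar{\theta}$-neighbors by the time its turn arrives to be p-stable --- which yields the all-$\bar{\theta}$ configuration on $\mathcal{B}_0$ needed to start the outer iteration.
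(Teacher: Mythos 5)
Your iteration step is fine --- all flips in the modified model are monotone $\theta\to\bar{\theta}$, so the region-of-expansion property of Lemma~\ref{Lemma:monoch_spread_1} persists at time $\ti^*$ and does let a first monochromatic (or affected) $w$-block grow ring by ring. The gap is the base case, which you correctly identify as the obstacle but do not actually close, and which is where the entire content of the lemma lives. The only quantitative input you use there is the state at time $\ti^*$: that $\mathcal{N}(u)$ contains fewer than $\tau\size$ particles in state $\theta$. This is provably insufficient. A radius-$w$ ball whose $\theta$-count is just below $\tau\size$ is a radical region with $\epsilon'=0$, and Lemma~\ref{Lemma:Trigger} requires $\epsilon'>e(\tau)$ with $e(\tau)>0$ for every $\tau<1/2$ precisely because such a bare deficit of order $(1/2-\tau)\size$ over a radius-$w$ ball does not support the cascade you describe. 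Concretely: if the fewer-than-$\tau\size$ remaining $\theta$-particles of $\mathcal{N}(u)$ all sit in $\mathcal{N}(u)\setminus\mathcal{B}_0$ (so $\mathcal{B}_0$ is already all $\bar{\theta}$ and there is nothing left in $\mathcal{B}_0$ to flip), a corner node $v$ of $\mathcal{B}_0$ still has about $(7/16)\size$ of $\mathcal{N}(v)$ outside $\mathcal{N}(u)$, where the configuration is balanced, giving a $\theta$-count near $\tau\size+(7/32)\size>\tau\size$; so $v$ is not affected, and your plan of ``flipping the remaining $\theta$-particles of $\mathcal{B}_0$ in order of proximity to $u$'' has no moves available. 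Even when $\mathcal{B}_0$ does contain $\theta$-particles, the second particle in your ordering already need not be p-stable (its neighborhood exceeds $\mathcal{N}(u)$ by a strip of $2w+1$ nodes). Your reduction to making the full $3w/2$-ball $\mathcal{R}$ monochromatic is also stronger than necessary --- affectedness only requires fewer than $\tau\size$ $\theta$-particles per neighborhood, not zero --- which makes the counting strictly harder.

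The idea you are missing is that the paper argues from the \emph{history} of the process, not from the configuration at time $\ti^*$. Because the initial configuration satisfies a balance property w.h.p.\ (this is why the proof introduces the rectangles $\mathcal{R}_{0.5,w^{1/4}}$, $\mathcal{R}_{w^{1/4},0.5}$) and contains no affected nodes, $u$ can only have become affected after $N'=(1/2-\tau)\size+o(\size)$ flips inside $\mathcal{N}(u)$. Tracing the causal chain backward --- the first particle $v$ to become p-stable among the four partition blocks covering $\mathcal{N}(u)$ must owe its p-stability to $N'$ flips lying in at most three blocks, and one more step back yields a node $v'$ with the same property --- one finds a single block that has already received at least $N'/3+o(N')$ flips. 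A symmetrization/FKG argument reduces to the extremal placement of these flips, and the machinery of Lemma~\ref{Lemma:Trigger} then produces an affected $w$-block centered at $v'$, which need not be $u$; this is exactly why the statement only promises an affected block somewhere inside the $10w$-neighborhood rather than at its center. Without this backward concentration of the prior flips (or an equivalent source of a concentrated seed), the ``careful counting argument'' you defer to cannot be completed.
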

 
To prove the above lemma we first need the following definition.

\begin{definition}[Balance Property]
Let $\mathcal{R}_{a,b}(v) \subset \mathbb{R}^2$ denote a rectangle of sides $a$ and $b$ centered at $v\in G_w$. Let $\mathcal{I}$ be the collection of sets of particles in the possible intersections of the two rectangles $\mathcal{R}_{0.5,w^{1/4}}$  and  $\mathcal{R}_{w^{1/4},0.5}$  with the neighborhood $\mathcal{N}_{10w}$ defined above in the initial configuration. Namely,
\begin{align*}
\mathcal{I} := \bigcup_{v\in G_w} \left\{\mathcal{R}_{0.5,w^{1/4}}(v) \cap \mathcal{N}_{10w} \right\} \cup \bigcup_{v\in G_w} \left\{\mathcal{R}_{w^{1/4},0.5}(v) \cap \mathcal{N}_{10w} \right\}.
\end{align*} 
Also, let $W_I$ be the random variable representing the number of particles in state $\bar{\theta}$ for all $I \in \mathcal{I}$, and ${\size}_I$ be the total number of particles in $I \in \mathcal{I}$. We say a neighborhood has the \textit{balance property} if and only if,
for any $\epsilon \in (0,1/2)$, and for all $I\in \mathcal{I}$ we have $W_I-{N}_I/2  < {w}^{1/8+\epsilon}$.
\end{definition}

\begin{proof}[Proof of Lemma~\ref{Lemma:pre_expandable}] 
Let $A$ be the event that the balance property holds for $\mathcal{N}_{10w}$ in the initial configuration. With a similar argument as in Lemma~\ref{Lemma:goodblock}, and since adding conditions on the configuration of $\mathcal{N}_{10w}$ in the modified model can only increase the probability of having  balanced neighborhoods, it is easy to see that $A$ occurs w.h.p. 

 Now assume that the balance property holds for $\mathcal{N}_{10w}$ in the initial configuration. Consider a $w$-block (henceforth referred to as a block) inside $\mathcal{N}_{10w}$ such that the affected node at the center of $\mathcal{N}_{10w}$ denoted by $u$ at time $\ti^*$ is located in the lower-left corner of this block. We then partition $G_w$ into blocks starting from this block (see Figure~\ref{fig:goodbad}). 
 Since node $u$ is an affected node, the balance property in the initial configuration implies that there have been $$N' = (1/2-\tau)\size+o(\size)$$ flips in the neighborhood of $u$. The number of particles in this neighborhood is close to the number of particles in four blocks, since the number of particles in a block differs from the number of particles in  a quarter of a neighborhood only by $O(w)$.
 This in turn implies that there have been $\size'$ p-stable particles in these blocks that have flipped, so that node $u$ has become affected. Let $B$ denote the event that the sequence of flips mentioned in the statement of this lemma exists.
 
Now consider the first p-stable particle in one of these four blocks located at some node $u_1$. In order for this particle to become p-stable, there should have been $\size'$ flips of p-stable particles in the rest of its neighborhood. The rest of the neighborhood  consists of at most three quarter of a neighborhood, so the number of particles in it is close to the number of particles in three blocks, since the number of particles in a block differs from the number of particles in a quarter of a neighborhood only by   $O(w)$.
Next, consider the first particle that has become p-stable in one of these three blocks (located at some node $u_2$). With a similar argument as above, there should have been $\size'$ flips of p-stable particles in the rest of the neighborhood of this particle, which again consists of at most three blocks to make this particle p-stable. 
We now want to consider an event whose occurrence will lead to $u_1$ requiring the maximum number of flips to become affected. Let $f_1, f_2,$ and $f_3$ denote the number of flips in each of the three blocks that happen before $u_1$ becomes affected, respectively. Let $B_1$ denote the event where $ f_1 = f_2 \pm O(w)$, $f_2 = f_3 \pm O(w)$ and that the $l_\infty$ distance between each pair of the nodes with one node in one of these three blocks and the other in another one of these three blocks is maximum, and that also the $l_\infty$ distance between each of these nodes where a flip occurs and the node $u_1$ is also maximum.  

 Now, if we show that event $B$ conditional on $B_1$ occurs w.h.p., it is easy to see (using the Bayes' theorem and the FKG inequality) that event $B$ occurs w.h.p. as well, namely 
 \begin{align}\nonumber
     P(B) = P(B_1)P\left(B\given[\Big]B_1\right) + P(B_1^C)P\left(B\given[\Big]B_1^C\right) \ge P(B|B_1).
 \end{align}
 
 To see that $B$ conditional on $B_1$ occurs w.h.p., we first argue that the block with the largest number of p-stable particles in at most the three blocks comprising close to $3/4$ of the neighborhood of node $u_1$ -- except for $O(w)$ nodes -- where flips could have happened has had at least $$(1/3)\size' + o(\size')$$  p-stable particles. 
 Let $u_2$ be the first node in the neighborhood of $u_1$ that has become p-stable. Let $B_2$ denote a similar event as $B_1$ but for node $u_2$. 
 With a similar argument for the above equation, it is easy to see that event $B$ has a smaller probability when event $B_2$ occurs as well.
 \begin{align}\nonumber
     P(B) \ge P\left(B \given[\Big] B_1, B_2 \right)
 \end{align}
 
 It is now easy to check (using similar elementary arguments as in the proof of Lemma~5 in \cite{omidvar2017self2}) that the flips of particles leading to node $u_2$ becoming p-stable and the flips of particles leading to node $u_1$ becoming p-stable will lead to the formation of a block of affected nodes centered at this node. 
Finally, combined with the fact that the balance property holds for $\mathcal{N}_{10w}$ w.h.p., the proof is complete.
\end{proof}

\begin{proposition*} \label{Prop:Tess_avg_SAG} 
There exists a constant $c>0$ such that for sufficiently large $\size$, $\mathbb{E}[a]$ is SAG($Q$) where
\begin{align} 
Q(\alpha) = \frac{\alpha^{1/2}}{\size^c\log^{3/2} \alpha}.   
\end{align}
\end{proposition*}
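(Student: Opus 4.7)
The plan is to derive both conditions of SAG$(Q)$ by transferring Proposition~\ref{Prop:Tess_fluc} (concentration of random passage times around their expectations) into statements about $\mathbb{E}[a]$. The choice $Q(\alpha)=\alpha^{1/2}/(\size^c\log^{3/2}\alpha)$ is calibrated so that $\alpha/Q(\alpha)=\size^c\alpha^{1/2}\log^{3/2}\alpha$ matches the fluctuation scale of Proposition~\ref{Prop:Tess_fluc}. I would work throughout on the almost-sure event supplied by Proposition~\ref{Prop:Tess_fluc}, intersected with the w.h.p.\ event provided by Lemma~\ref{Lemma:pre_expandable}.

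For (\ref{eq:Tess_EQSAG1}), given $x,y$ with $\alpha=\mathbb{E}[a_{x,y}]/m\ge 2^{c'_1\size}$, the strategy is to subdivide a typical optimal path into $m$ approximately equal pieces. First I would fix a configuration $\omega$ in the concentration event, select an optimal path $\pi$ from $x$ to $y$ with $T(\pi,\omega)=a_{x,y}(\omega)$ as in (\ref{eq:2.14}), and pick intermediate nodes $x=x_0,x_1,\ldots,x_m=y$ along $\pi$ so that the accumulated time along $\pi$ up to $x_i$ differs from $i\alpha$ by at most a single flipping time. Optimality of $\pi$ then forces each sub-path from $x_i$ to $x_{i+1}$ to be optimal between its endpoints---after paying the additive $O(w^2)$ mean-flip cost that Lemma~\ref{Lemma:pre_expandable} requires to promote the affected node at $x_i$ into an affected* block near $x_i$---so $a_{x_i,x_{i+1}}(\omega)$ lies within $O(w^2)$ of $a_{x,y}(\omega)/m$. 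Applying Proposition~\ref{Prop:Tess_fluc} separately to $a_{x,y}$ and to each $a_{x_i,x_{i+1}}$ then yields
\begin{align*}
\bigl|\mathbb{E}[a_{x_i,x_{i+1}}]-\alpha\bigr| \le C\alpha^{1/2}\log^{3/2}\alpha + \frac{C(m\alpha)^{1/2}\log^{3/2}(m\alpha)}{m} + O(w^2) \le \size^c\alpha^{1/2}\log^{3/2}\alpha,
\end{align*}
which is precisely $\alpha/Q(\alpha)$.

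For (\ref{eq:Tess_EQSAG2}), I would derive the ball inclusion from the condition just proved. Given $x$ with $\mathbb{E}[a_{0,x}]\le (1+1/Q(r))r$, apply the construction above with endpoints $0$ and $x$ and with $m=\lceil Q(r)/3\rceil$ so that $\alpha\approx 3r/Q(r)$. The candidate $y:=x_{m-1}$ then satisfies $\mathbb{E}[a_{0,y}]\le (m-1)\alpha(1+1/Q(\alpha))\le r$ and $\mathbb{E}[a_{y,x}]\le \alpha(1+1/Q(\alpha))\le 6r/Q(r)$ once all error terms are consolidated; the factor $6$ appearing in (\ref{eq:Tess_EQSAG2}) is exactly the slack needed to absorb these lower-order corrections.

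The hard part will be bookkeeping all the errors uniformly into a single exponent $c$. Three sources must be controlled simultaneously: (a) the concentration fluctuations of order $\alpha^{1/2}\log^{3/2}\alpha$ from Proposition~\ref{Prop:Tess_fluc}; (b) the $\mathrm{poly}(\size)$ additive corrections from Lemma~\ref{Lemma:pre_expandable} needed to convert an affected node at each $x_i$ into an affected* block; and (c) the discretization error arising because $x_i$ must live on the lattice, which by the biLipschitz estimate (\ref{eq:bilips}) is also $\mathrm{poly}(\size)$. The hypothesis $\alpha\ge 2^{c'_1\size}$ is essential here: it forces both (b) and (c) to be exponentially smaller than $\alpha^{1/2}$, so they can be absorbed into the single prefactor $\size^c$ in the definition of $Q$.
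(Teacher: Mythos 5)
Your overall strategy---split an optimal path at intermediate nodes, use the concentration estimate of Proposition~\ref{Prop:Tess_fluc} to transfer the resulting splitting of $a_{x,y}$ into a splitting of $\mathbb{E}[a_{x,y}]$, and calibrate $Q$ so that $\alpha/Q(\alpha)$ matches the fluctuation scale---is the same as the paper's. Structurally you diverge in one respect: you construct the full $m$-point subdivision and derive (\ref{eq:Tess_EQSAG2}) by hand, whereas the paper only establishes the existence of a \emph{single} intermediate node $z$ splitting $\mathbb{E}[a_{x,y}]$ in a prescribed ratio $\lambda:(1-\lambda)$ and then invokes Tessera's Propositions~3.1 and~4.1 to bootstrap this into the full SAG property. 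Your route is more self-contained but forces you to control the error uniformly over all $m$ pieces; the paper's route outsources that bookkeeping to Tessera's machinery. Either can work.

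The genuine gap is the sentence ``Optimality of $\pi$ then forces each sub-path from $x_i$ to $x_{i+1}$ to be optimal between its endpoints.'' This is the standard FPP sub-path argument, but it relies on two facts that fail here: (i) exact subadditivity $a_{x,y}\le a_{x,z}+a_{z,y}$, which you correctly repair up to a $\mathrm{poly}(\size)$ cost via Lemma~\ref{Lemma:pre_expandable}; and (ii) the fact that the tail of $\pi$ beyond $x_i$ is itself an admissible spreading sequence started from $x_i$. Point (ii) is false in this model: the flips along $\pi$ after time $a_{x,x_i}$ were enabled by the entire history of the spread from $x$, not only by what emanates from $x_i$, so the remaining time $a^*_{x_i,y}:=a_{x,y}-a_{x,x_i}$ could a priori be much \emph{smaller} than $a_{x_i,y}$. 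The paper devotes the core of its proof to exactly this direction, showing $a^*_{z,y}\ge a_{z,y}-\size^{c}r_w^{1/2}\log^{3/2}r_w$ by arguing (via Lemma~\ref{Lemma:lemma1'}, the absence of other affected nodes outside a fluctuation-width neighborhood of the frontier, and a contradiction with $z$ lying on the optimal path) that the continuation of the optimal path can only be fed by the spread out of a neighborhood of $z$. Note in particular that the resulting error is of order $\size^{c}r_w^{1/2}\log^{3/2}r_w$, not the $O(w^2)$ you claim; this does not break your final estimate, since $Q$ is built to absorb errors of that order, but the justification for even this weaker bound is the heart of the proposition and is missing from your proposal. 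You would also need the supporting events the paper conditions on (optimal paths confined to $\mathcal{N}'_{r_w(\log r_w)^{C_1}}$, bounded flipping times, and the percolation estimate ensuring Lemma~\ref{Lemma:pre_expandable} is applicable at every $x_i$) to make the argument uniform along the path.
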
   
\begin{proof}


Let $ r_w = 2^{c'_1\size}$ where $c'_1$ is the constant in the definition of SAG. Let $\mathcal{N}_{v}$ be as defined in Section~\ref{Sec:Concentration}. Let events $A_1, A_2$ be as defined in the proof of Proposition~\ref{Prop:Tess_fluc}. Let $A_3$ be the event that the first flipping time of all the particles inside $\mathcal{N}_{v}$ is  less than $ {\size^2}$. Let $t_x$ denote the first flipping time of particle at node $x \in \mathcal{N}_v$. Now \ref{eq:1.14} implies that for sufficiently large $\size$ there exists a constant $c>0$ such that
$$ P(t_x> \size^2 ) \le \exp(-c {\size^2}). $$
It follows that event $A_3$ occurs w.h.p.  

Let us define a \textit{very good block} as a $10w$-block that satisfies the balance property and a \textit{very bad block} as a $10w$-block that does not satisfy this property. Let us divide the lattice into $10w$-blocks starting from the block centered at the origin. Since each of these blocks is a very good block w.h.p., it follows from Theorem~\ref{Thrm:grimmett_bad_cluster} that in $\mathcal{N}_v$ there are no clusters of very bad blocks with radius larger than $\size^2$ in the initial configuration w.h.p.  Let us denote this event by $A_4$. Conditional on this event, using Lemma~\ref{Lemma:pre_expandable}, we can conclude that when a node becomes affected after a time $O(\size^3)$ there is an affected block in a neighborhood of radius $\size^3$ w.h.p.

Now we can conclude that the intersection of the above events occurs w.h.p. In particular, w.h.p. we have
\begin{align}\label{eq:Tess_sup}
\sup_{x,y\in \mathcal{N}_{r_w}}\left|  a_{x,y}- \mathbb{E}[a_{x,y}]  \right| \le Cr_w^{1/2}\log^{3/2} r_w,
\end{align}
the optimal path between any two nodes inside $\mathcal{N}_{r_w}$ is contained in $\mathcal{N}_v$, and the first flipping time of all the particles inside $\mathcal{N}_v$ is less than $\size^2$.

%
%

 Consider an optimal path $\gamma$ between $x,y$  where $ x,y \in \mathcal{N}_{r_w}$.  
 It follows that the maximum flipping time over all the particles on $\gamma$ is at most  $\size^2$ w.h.p. Therefore w.h.p. one can find a node $z$ in $\gamma$ and $\lambda \in (0,1)$ such that we have
 \begin{align*}
 \left|\lambda a_{x,y} -a_{x,z}\right| \le \size^2,
 \end{align*}
 or equivalently
 \begin{align*}
 \left|(1-\lambda) a_{x,y} -(a_{x,y}-a_{x,z})\right| \le \size^2.
 \end{align*}
 
Now let us denote $a_{x,y}-a_{x,z}$ by $a^*_{z,y}$. We now show that w.h.p.
\begin{align}\label{eq:sub_ad}
a^*_{z,y} \le  a_{z,y} + \size^5.
\end{align}
To see this, we note that the event of $a_{z,y}$ being smaller than some value and the event of having an affected block centered at $z$ are both increasing in the change of a $\theta$-particle to a $\bar{\theta}$-particle, so using the FKG inequality these events are positively correlated. Now we note that since $z$ has become affected after $a_{x,z}$, by considering an upper bound for the fliping times that consists of the summation of all the flipping times and using  standard concentration bounds we can conclude that after at most $\size^5$ time, w.h.p. this node will be inside an affected block and hence we can conclude that (\ref{eq:sub_ad}) holds w.h.p.


Next, we show that there exists a constant $c>0$ such that $a^*_{z,y} \ge a_{z,y} - \size^cr_w^{1/2}\log^{3/2} r_w$ w.h.p. To see this, first we note that due to (\ref{eq:Tess_sup}) and (\ref{eq:bilips}) we can conclude that there exists $c'>0$ such that there are no affected nodes of any type in $\mathcal{N}_v\setminus [A(x,a_{x,y})]_{\size^{c'}r_w^{1/2}\log^{3/2} r_w}$, where $[A]_t$ denotes the $l_\infty$ neighborhood of $A$. 
Consider a neighborhood $\mathcal{N}(z)$ with radius $\size^{c'}r_w^{1/2}\log^{3/2} r_w$ centered at $z$. We want to argue that w.h.p. the formation of the optimal path from $x$ to $y$ from the time $a_{x,z}$ only involves the spread of affected nodes from $\mathcal{N}(z)$. To see this, assume that the spread of affected nodes started from a node $z'$ contained in $A'(0,a_{x,z})\setminus \mathcal{N}(z)$ has also participated in the formation of the optimal path. This implies that w.h.p. there had been a sequence of possible flips leading to a p-stable particle that was needed for the formation of the optimal path before the spread of affected nodes from $\mathcal{N}(z)$ had reached this particle. However, this implies that w.h.p. node $z$ is not on the optimal path from $x$ to $y$ which is a contradiction. Hence we can conclude that there exists $c>0$ such that  $a^*_{z,y} \ge a_{z,y} - \size^cr_w^{1/2}\log^{3/2} r_w$ w.h.p.

Now we can conclude that there exists a constant $C''>0$  such that w.h.p.,
 \begin{align*}
 \left|\lambda a_{x,y} -a_{x,z}\right| \le \size^2,
 \end{align*}
 and
 \begin{align*}
 \left|(1-\lambda) a_{x,y} -a_{z,y}\right| \le \size^{C''}r_w^{1/2}\log^{3/2}r_w.
 \end{align*}

Combined with (\ref{eq:Tess_sup}) we can conclude that, w.h.p. there exists a constant $D>0$ such that for all $x,y \in \mathcal{N}_\rho$ there exists a node $z$ for which we have
\begin{align*}
\left|\lambda\mathbb{E}[a_{x,y}]-\mathbb{E}[a_{x,z}]\right| \le \size^Dr_w^{1/2}\log^{3/2}r_w,
\end{align*}
and
\begin{align*}
\left|(1-\lambda)\mathbb{E}[a_{x,y}]-\mathbb{E}[a_{z,y}]\right| \le \size^Dr_w^{1/2}\log^{3/2}r_w.
\end{align*}
The result now follows from Propositions 3.1 and 4.1 in Sections~3 and~4 in \cite{tessera2014speed}.
\end{proof}

Now, in order to use the above definition to conclude the shape theorem we need the following result which is a modified version of Proposition~1.8 in \cite{tessera2014speed}. 


\begin{proposition*}[Tessera -- Modified]\label{Prop:SAG_to_shape}
 If $\mathbb{E}[a]$ is $SAG(Q)$ with
\begin{align*}
Q(\alpha) = \frac{\alpha^{1/2}}{\size^c\log^{3/2} \alpha},
\end{align*} 
 then there exists a norm $l_*$ on $\mathbb{R}^2$ and $C>0$ such that for all $\ti \ge 2^{c_1\size}$, when $\size$ is sufficiently large we have
\begin{align*}
B_{l_*}\left(0,\ti-\size^C \ti^{1/2}\log^{3/2} \ti \right)  \cap \mathbb{Z}^2 \subset \bar{A}(0,\ti) \subset B_{l_*}\left(0,\ti+\size^C\ti^{1/2}\log^{3/2}\ti \right) .
\end{align*}
\end{proposition*}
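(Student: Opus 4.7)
The plan is to adapt the proof of Proposition~1.8 in~\cite{tessera2014speed}, tracking the additional polynomial factor $\size^c$ appearing in our rate function $Q$. Tessera's original result converts a $SAG(Q_0)$ condition for $Q_0(\alpha)=\alpha^{1/2}/\log^{3/2}\alpha$ into a shape theorem with fluctuation of order $t^{1/2}\log^{3/2}t$; the modification consists in verifying that the $\size^c$ factor in our $Q$ only inflates the final fluctuation to $\size^{C}t^{1/2}\log^{3/2}t$ for a (possibly larger) constant $C$.

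First I would construct the candidate norm $l_*$. The SAG condition~(\ref{eq:Tess_EQSAG1}) applied with $m=2$ and iterated dyadically, combined with the bilipschitz sandwich~(\ref{eq:bilips}) between $\mathbb{E}[a]$ and the Euclidean metric, yields an approximate subadditivity for $\mathbb{E}[a]$ on scales above $2^{c_1'\size}$. A Fekete-type argument then gives convergence of $\mathbb{E}[a_{0,nv}]/n$ to a positive limit $l_*(v)$ for every rational direction $v \in \mathbb{Q}^2$; homogeneity and the triangle inequality transfer to the limit, producing a norm on $\mathbb{R}^2$.

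Second, to obtain the quantitative inclusions, fix $x$ with $\mathbb{E}[a_{0,x}]\approx t$ and apply SAG~(\ref{eq:Tess_EQSAG1}) at a well-chosen scale $\alpha$ between $2^{c_1'\size}$ and $t$, producing an almost-geodesic polygonal path from $0$ to $x$ whose $m\approx t/\alpha$ segments each have expected passage time within $\alpha/Q(\alpha)$ of $\alpha$. Comparing each segment to the norm $l_*$ and summing, the discrepancy $|\mathbb{E}[a_{0,x}]-l_*(x)|$ is bounded by $m\cdot\alpha/Q(\alpha)=\size^c\,t\log^{3/2}\alpha/\alpha^{1/2}$; optimizing $\alpha$ yields the $\size^{C}t^{1/2}\log^{3/2}t$ bound. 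The neighborhood inclusion~(\ref{eq:Tess_EQSAG2}) then converts this pointwise estimate into the asserted set inclusions of~(\ref{eq:Tess2}), with the outer inclusion immediate and the inner one obtained by thickening $\bar A(0,t)$ via~(\ref{eq:Tess_EQSAG2}).

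The main obstacle is the multi-scale bookkeeping of the $\size^c$ factor. In Tessera's original setting the SAG threshold is a fixed constant and the iteration over dyadic scales loses only a bounded multiplicative factor at each level; here the threshold is the exponentially large $2^{c_1'\size}$, and each dyadic scale between the threshold and $t\le 2^{c_2\size}$ contributes its own $\size^c$. I must verify that, summed against the geometric decay $\alpha^{-1/2}$ at each scale, these contributions close to a total fluctuation of polynomial order $\size^{C}t^{1/2}\log^{3/2}t$ rather than something growing exponentially in the number $O(\size)$ of scales. Since $\alpha^{1/2}\ge 2^{c_1'\size/2}$ at the bottom scale comfortably dominates $\size^c$, the geometric factor is expected to absorb the per-scale overhead and the iteration will close; making this quantitatively precise is the delicate technical step.
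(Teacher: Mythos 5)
There is a genuine gap, and it sits exactly at the point this proposition is designed to handle. Your construction of the norm $l_*$ via a Fekete-type argument requires approximate subadditivity of $n\mapsto\mathbb{E}[a_{0,nv}]$ along a fixed direction, i.e.\ some form of the triangle inequality $\mathbb{E}[a_{0,(m+n)v}]\le\mathbb{E}[a_{0,mv}]+\mathbb{E}[a_{mv,(m+n)v}]+\mathrm{err}$. In this model $\mathbb{E}[a]$ does \emph{not} satisfy a triangle inequality (passage times are defined by placing an affected* block at the source, and the spread from $0$ to $y$ does not produce an affected* block at $y$ from which one could concatenate), and the SAG hypothesis does not supply a substitute: condition (\ref{eq:Tess_EQSAG1}) with $m=2$ only asserts the existence of \emph{some} approximate midpoint $z$ for the pair $(0,2nv)$, with no guarantee that $z=nv$ or even that $z$ lies near the segment. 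So the directional limits $l_*(v)=\lim\mathbb{E}[a_{0,nv}]/n$ are not justified. The paper's proof avoids this entirely by working at the level of sets: it derives from (\ref{eq:Tess_EQSAG1}) the sumset sandwich $\bar{A}(0,(1-\epsilon)r/W)^W\subset\bar{A}(0,r)\subset\bar{A}(0,(1+\epsilon)r/W)^W$ with $\epsilon=1/Q(r/W)$, converts it (using $\|x-y\|\le\size^{c''}\mathbb{E}[a_{x,y}]$) into a Hausdorff-distance bound $d_H\bigl(\tfrac1r\bar{A}(0,r/W)^W,\tfrac1r\bar{A}(0,r)\bigr)\le\size^{c''}/Q(r/W)$, and then obtains $l_*$ as the Hausdorff limit of the rescaled sets via the first part of Tessera's Proposition~1.8; this is the analogue of Tessera's Lemma~5.1 that must be re-proved precisely because the triangle inequality fails.

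Your second step has a related problem: the optimization of $m\cdot\alpha/Q(\alpha)=\size^{c}t\,\alpha^{-1/2}\log^{3/2}\alpha$ over $\alpha$ is degenerate, since the bound is minimized at $\alpha=t$, i.e.\ $m=1$, where the SAG decomposition is vacuous and relates $\mathbb{E}[a_{0,x}]$ to nothing. To get a nontrivial comparison between $\mathbb{E}[a_{0,x}]$ and $l_*(x)$ one must iterate over a full range of scales and sum the per-scale errors $\size^{c''}/Q(r/W)$ over dyadic $W$ (these decay geometrically in the dyadic level, so the fixed factor $\size^{c''}$ does not compound -- this is the resolution of the concern you raise in your last paragraph). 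You correctly identify this multi-scale iteration as the delicate step, but you defer it rather than carry it out, and it is the substance of the proof.
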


\begin{proof}
The proof is similar to the proof of Proposition~1.8 in  \cite{tessera2014speed}, except for the fact that we need to prove a statement similar to Lemma~5.1 in \cite{tessera2014speed} for our setting (since we do not have triangular inequality for $\mathbb{E}[a]$). We want to show that there exists a constant $c''>0$ such that for all $W\in \mathbb{N}$ and all $r/W \ge 2^{c'_1\size}$ where $c'_1$ is the constant in the definition of SAG, and for $\size$ sufficiently large we have,
\begin{align*}
d_H\left( \frac{1}{r} \bar{A}(0,r/W)^W,\frac{1}{r}\bar{A}(0,r) \right) \le \frac{\size^{c''}}{Q(r/W)},
\end{align*}
where $d_H$ denotes the  Hausdorff distance with respect to the $l_2$ norm. 
We note that due to (\ref{eq:Tess_EQSAG1}) we can write 
\begin{align*}
 \bar{A}(0,(1-\epsilon)r/W)^W \subset \bar{A}(0,r) \subset \bar{A}(0,(1+\epsilon)r/W)^W, 
\end{align*}
where
\begin{align*}
\epsilon = \frac{1}{Q(r/W)}
\end{align*}
Since there exists a $c''>0$ such that $\|x-y\| \le \size^{c''}\mathbb{E}[a_{x,y}]$ for all $x,y\in \mathcal{N}$ we have,
 \begin{align*}
\bar{A}(0,(1+\epsilon)r/W)^W \subset (\bar{A}(0,(1-\epsilon)r/W)_{12\epsilon r/W})^W \\
\subset  (\bar{A}(0,(1-\epsilon)r/W))B_{l_2}(0,12\size^{c''}\epsilon r/W)^W \\
= (\bar{A}(0,(1-\epsilon)r/W))^WB_{l_2}(0,12\size^{c''}\epsilon r)
 \end{align*}
where the notation $[\bar{A}(0,r/W)]_{12\epsilon r /W}$ stands for the $12\epsilon r/W$-neighborhood of $\bar{A}(0,r/W)$. The rest of the proof follows from the first part of the proof of Proposition~1.8 in~\cite{tessera2014speed} presented in Section~5.3.
\end{proof}

\begin{proof}[Proof of Proposition~\ref{Prop:Tess1}]
The proof follows combining Propositions~\ref{Prop:Tess_fluc}, \ref{Prop:Tess_avg_SAG}, and~\ref{Prop:SAG_to_shape}.
Using Proposition~\ref{Prop:Tess_fluc}, we can conclude that, almost surely, there exist $N'_0$ and $C'>0$ such that for $\size > N'_0$, we have
\begin{align*}
\bar{A}(0,\ti-C'\ti^{1/2}\log^{3/2} \ti) \subset A'_F(0,\ti) \subset \bar{A}(0,\ti+C'\ti^{1/2}\log^{3/2} \ti).
\end{align*}

Using Proposition~\ref{Prop:Tess_avg_SAG} we can conclude that $\mathbb{E}[a]$ is SAG($\alpha^{1/2}/\size^c\log^{3/2} \alpha$). Finally, the proof follows from Proposition~\ref{Prop:SAG_to_shape} with ${Q(\ti)=\ti^{1/2}/\size^c{\log^{3/2} \ti}}$ when $\size$ is sufficiently large.

\end{proof}

\begin{lemma*} \label{Lemma:Unhappy_growth}
Let $c_2<c'_2<c_2'' \in (c_2,0.5(1-H(\tau')))$ be constants and let $u\in G_w$, and $\mathcal{N}_{\radius}(u)$ and $\mathcal{N}_{\radius'}(u)$ be two neighborhoods with radii ${\radius}=2^{c'_2\size}$ and $\radius'=2^{c''_2\size}$. 
There exists a constant $c \in \mathbb{R}^+$, such that for sufficiently large $\size$, we have
\begin{align*}
P\left(\mathcal{T}(\radius)>\rho\right) > 1 - e^{- \sqrt{\radius'}/\size^c},
\end{align*}
where $\mathcal{T}(\radius)$ is defined in (\ref{Tinfdef}). 
\end{lemma*}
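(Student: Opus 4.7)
The plan is to prove that, conditional on $A$, a $\bar\theta$-affected node can appear in $\mathcal{N}_\rho(u)$ by time $\rho$ only as the endpoint of a cascade of flips initiated far outside $\mathcal{N}_\rho(u)$, and then to rule out such fast cascades via the concentration bound of Theorem~\ref{Thrm:Theorem_Concentration}.

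First, I would bootstrap the conditioning on $A$ to a statement about $\bar\theta$-affected nodes. By symmetry of the Bernoulli$(1/2)$ initial distribution under $\theta\leftrightarrow\bar\theta$, the $\bar\theta$-analogue of Lemma~\ref{Lemma:R_unhappy} gives that, unconditionally, no initial $\bar\theta$-affected node lies in $\mathcal{N}_{\rho'}(u)$ with probability at least $1-2^{-c_0\size}$ for some $c_0>0$. Combined with $P(A)\geq 1-2^{-c_0\size}$ via Bayes, the same exclusion holds conditional on $A$. Next, by Lemma~\ref{Lemma:monoch_spread_1} (and its $\bar\theta$ analogue) $\mathcal{N}_{\rho'}(u)$ is a region of expansion w.h.p. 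On the intersection of these events the initial configuration in $\mathcal{N}_{\rho'}(u)$ fits the hypotheses of the modified model of Section~\ref{Sec:Concentration}, so any $\bar\theta$-affected node entering $\mathcal{N}_\rho(u)$ at positive time must arise from a chain of p-stable flips whose seed is an initial $\bar\theta$-affected node lying outside $\mathcal{N}_{\rho'}(u)$, and which therefore traverses an annulus of width $\gtrsim\rho'$.

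Second, I would couple such a chain with first passage times in a $\bar\theta$-symmetric copy of the modified model, matching flipping times pointwise, so that the original cascade time from any seed $x$ just outside $\mathcal{N}_{\rho'}(u)$ to any target $y\in\mathcal{N}_\rho(u)$ stochastically dominates $a_{x,y}$. From (\ref{eq:bilips}) we have $\mathbb{E}[a_{x,y}]\geq \|x-y\|/\size^{c_1}\geq \rho'/\size^{c_1}\gg \rho$. Taking $\lambda=\mathbb{E}[a_{x,y}]-\rho\asymp\rho'/\size^{c_1}$ in Theorem~\ref{Thrm:Theorem_Concentration}, and using $\log\|x-y\|=O(\size)$, I obtain $P(a_{x,y}\leq\rho)\leq \exp(-c\sqrt{\rho'}/\size^{c'})$. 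A union bound over target and source nodes in the relevant (polynomially large in $\rho'$) region is absorbed by the super-polynomial exponent, giving $P(\mathcal{T}(\rho)\leq\rho\mid A)\leq e^{-\sqrt{\rho'}/\size^{c}}$.

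The main obstacle is the coupling in the second step: the original dynamics permit p-stable flips in both directions and may involve several simultaneous seeds of $\bar\theta$-affectedness once the spread gets underway, whereas the modified model of Section~\ref{Sec:Concentration} tracks a single directional cascade from a seed placed in an otherwise clean domain. One has to show that intermediate $\theta\to\bar\theta$ flips inside the annulus cannot accelerate the $\bar\theta$-affectedness front beyond what the modified FPP already allows, which calls for a careful block-level comparison in the spirit of the two intermediate FPPs of Section~\ref{Sec:Concentration}. A related subtlety is that the exclusion ``no initial $\bar\theta$-affected in $\mathcal{N}_{\rho'}(u)$'' only holds with probability $1-2^{-c_0\size}$, which is weaker than the target $1-e^{-\sqrt{\rho'}/\size^{c}}$; this gap can be closed by a multiscale refinement that tolerates rare seeds at distance $\gtrsim\sqrt{\rho'}$ from $\mathcal{N}_\rho(u)$ and replays the concentration argument with the correspondingly reduced travel distance, since any seed closer than $\sqrt{\rho'}$ contributes a probability already dominated by $e^{-\sqrt{\rho'}/\size^{c}}$ after a Chernoff estimate on the count of $\bar\theta$-affected nodes in nested annuli around $\mathcal{N}_\rho(u)$.
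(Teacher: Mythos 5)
Your proposal follows essentially the same route as the paper's proof: reduce the event $\{\mathcal{T}(\rho)\le\rho\}$ to a first-passage time across the annulus between $\mathcal{N}_\rho(u)$ and $\mathcal{N}_{\rho'}(u)$ via the intermediate-FPP coupling and (\ref{eq:bilips}), note that the expected crossing time is of order $\rho'/\size^{c}\gg\rho$, apply a concentration inequality to the deviation, and union-bound over the polynomially many boundary pairs. The only cosmetic difference is that the paper invokes Talagrand's inequality for the intermediate process where you invoke Theorem~\ref{Thrm:Theorem_Concentration} (both yield the stated $e^{-\sqrt{\rho'}/\size^{c}}$ form), and your added caveats about the coupling and about $\bar{\theta}$-seeds inside the annulus flag genuine points the paper passes over silently but do not change the argument.
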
 
\begin{proof}
Let $x,y$ be two nodes on $\mathcal{N}_\rho(0)$ and $\mathcal{N}_{\rho'}(0)$ respectively such that they have the largest $l_\infty$ distance from the origin (i.e., they are on the boundaries of these neighborhoods).  
Let $a'$ denote the first passage time between two nodes located on $\mathbb{Z}^2$, similar to (\ref{eq:bilips}) there exists a constant $c'>0$ such that for sufficiently large $\size$ we can write
\begin{align*}
P\left(a_{x,y} \le \rho\right) \le P\left(a'_{0,\xi_1(\rho-\rho')/\size^{c'}} \le \rho \right).
\end{align*}
Now due to (\ref{eq:bilips}) there exists a constant $c''>0$ such that we can write
\begin{align*}
P(a_{x,y} \le \rho) \le P\left(a'_{0,\xi_1(\rho-\rho')/\size^{c'}}-\mathbb{E}\left[a'_{0,\xi_1(\rho-\rho')/\size^{c'}}\right] \le \rho - \frac{\rho'-\rho}{\size^{c''}}\right).
\end{align*}
Now using the fact that there are less than $64\rho'^2$ pairs of nodes on the boundaries of $\mathcal{N}_{\rho}(u)$ and $\mathcal{N}_{\rho'}(u)$ we can use the Talagrand concentration bound \cite[Proposition~8.3]{talagrand1995concentration} to conclude that there exists a constant $c>0$ such that for sufficiently large $\size$ we have
\begin{align*}
P(\mathcal{T}(\rho) \le \rho) \le \exp(-\sqrt{\rho'}/\size^c).
\end{align*}
\end{proof} 

We are now ready to consider the original model   introduced in this paper and provide the proof of Theorem~\ref{Thrm:Shape_theorem}.

\begin{proof}[Proof of Theorem 1 (Shape Theorem)] 
Let $\mathcal{N}_{\size}$ and $\mathcal{N}_{2\size}$ be two neighborhoods with radii $\size$ and $2\size$ respectively that are centered at the origin. Let us form a grid of $w$-blocks such that we have a block centered at the origin.  Let $A$ be the event of having a $\theta$-affected block at the origin in the original model. Let $A_1$ be the event that all the blocks in $\mathcal{N}_{2\size}\setminus \mathcal{N}_{\size}$ are good.  Using Lemma~\ref{Lemma:goodblock}, $A_1$ occurs w.h.p. Let $c'_2, c''_2 \in(c_2,0.5(1-H(\tau')))$ be  constants such that $c''_2>c'_2$ and let $\mathcal{N}_{\rho'}(0)$ and $\mathcal{N}_{\rho''}(0)$ be neighborhoods with radii $\rho' = 2^{c'\size}$ and $\rho'' = 2^{c''\size}$ respectively.
Let $A_2$ be the event that there are no affected nodes of any type in $\mathcal{N}_{\rho''} \setminus \mathcal{N}_{2\size}$. By Lemma~\ref{Lemma:R_unhappy}, $A_2$ occurs w.h.p. 
Let $A_3$ be the event that $\mathcal{N}_{\rho''}$ is a region of expansion.
Using Lemma~\ref{Lemma:monoch_spread_1}, the fact that events $A$ and $A_3$ are increasing events in the change of a $\theta$-particle to a $\bar{\theta}$-particle, and the FKG inequality, we can conclude that event $A_3$ occurs w.h.p.
Let $A_4$ be the event that, conditional on the previous events, there are no affected nodes of any type inside $\mathcal{N}_{\rho'}$ that is due to the spread of any type of affected node outside of $\mathcal{N}_{\rho''}$ at any time $\ti \le \rho'$.
Using Lemma~\ref{Lemma:Unhappy_growth}, and an application of FKG inequality, this event occurs w.h.p. 


Additionally, 
let $A_5$ and $A'_5$ be the events that the sum of the first flipping times of the particles in $\mathcal{N}_{2\size}$ in the original process and in the modified process will be less than $\sqrt{\ti}$ respectively where $t$ is as defined in the statement of the theorem. Standard bounds imply that $A_5$ and $A'_5$ occur w.h.p. 
Now, conditional on $A, A_1, A_2, A_3,A_4$ we can consider the following coupling between the original model and the modified model. We assume that all the particles in the initial configuration that are located outside $\mathcal{N}_{2\size}$ in both models have the same states, and that the first flipping time of each particle in the original model is the same as the flipping time of its corresponding particle in the modified model.
Using this coupling we can conclude that there exists $c>0$ such that 
\begin{align*}
&P\left( A_{F,\rho}(0,\ti) \subset B_{l_*}(0,\ti+\size^{c}\ti^{1/2}\log^{3/2} \ti) \given[\Big] A,A_1,A_2,A_3,A_4   \right) \\
&\ge P\left( A'_{F,\rho}(0,\ti + \sqrt{\ti}) \subset B_{l_*}(0,\ti+\size^{c}\ti^{1/2}\log^{3/2} \ti) \given[\Big] A'_5 \right),
\end{align*}
and
\begin{align*}
&P\left(B_{l_*}(0,\ti-\size^{c}\ti^{1/2}\log^{3/2} \ti)\cap \mathbb{Z}^2 \subset  A_{F,\rho}(0,\ti) \given[\Big] A,A_1,A_2,A_3,A_4,A_5   \right) \\
&\ge P\left(B_{l_*}(0,\ti-\size^{c}\ti^{1/2}\log^{3/2} \ti)\cap \mathbb{Z}^2 \subset  A'_F(0,\ti-\sqrt{\ti}) \right).
\end{align*}
Since events $A_1,A_2,A_3,A_4,A_5,A_5'$ occur w.h.p., and according to Proposition~\ref{Prop:Tess1}, event $B_{l_*}(0,\ti-\size^{c}\ti^{1/2}\log^{3/2} \ti)\cap \mathbb{Z}^2 \subset A'_{F,\rho}(0,\ti \pm \sqrt{\ti}) \subset B_{l_*}(0,\ti+\size^{c}\ti^{1/2}\log^{3/2} \ti)$ occurs w.h.p., hence we can conclude that, in the original model, conditional on $A$, w.h.p. we have
\begin{align*}
 B_{l_*}(0,\ti-\size^{c}\ti^{1/2}\log^{3/2} \ti)\cap \mathbb{Z}^2 \subset A_{F,\rho}(0,\ti) \subset B_{l_*}(0,\ti+\size^{c}\ti^{1/2}\log^{3/2} \ti).
\end{align*} 
%
\end{proof}

%

\section{Proof of the Size Theorem}\label{Sec:Proof}

Without loss of generality, we assume that the set of nodes of $G_w$ is a subset of the nodes on $\mathbb{Z}^2$ and we work with the obvious probability space. 
We first define the \textit{expandable region}; to do so we need the following lemma.


\begin{figure}[!t] 
\centering
\includegraphics[width=2.4in]{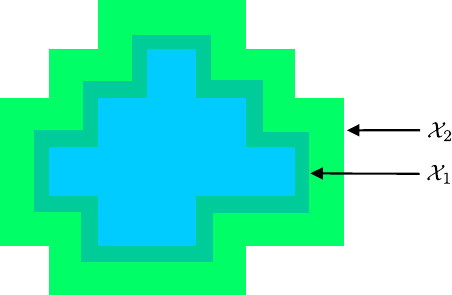}
\caption{Expandable region.  $\mathcal{X}_1$ is called a $\theta$-\textit{expandable region}  if there exists a set of flips of $\theta$-particles inside $\mathcal{X}_1$ leading to a $\theta$-affected node in $\mathcal{X}_2$.}
\label{fig:expandable}
\end{figure}

\begin{lemma*}\label{Lemma:bad_cluster}
For any $c>0$, there are no clusters of bad blocks with radius greater than $\csize$ in a neighborhood with radius $\rho = O(2^{cN})$ in the initial configuration w.h.p.
\end{lemma*}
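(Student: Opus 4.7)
The plan is to combine the single-block tail bound from Lemma~\ref{Lemma:goodblock} with the exponential decay of the radius of a bad cluster from Theorem~\ref{Thrm:grimmett_bad_cluster}, and then close with a union bound over the renormalized neighborhood $\mathcal{N}_\rho$.

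First I would recall that the renormalization of $\mathbb{Z}^2$ into disjoint $m$-blocks with $m \ge w$ (as described just before Theorem~\ref{Thrm:grimmett_bad_cluster}) yields a site-percolation environment on $G'_w$ in which the good/bad status of each block is determined entirely by the i.i.d.\ Bernoulli$(1/2)$ initial states of the particles inside it. Since distinct blocks are disjoint, these statuses are mutually independent. Lemma~\ref{Lemma:goodblock} gives, for fixed $\epsilon \in (0,1/2)$, the uniform bound $p := P(\text{an }m\text{-block is bad}) \le e^{-c_0 \size^{2\epsilon}}$ for some constant $c_0>0$, so for all sufficiently large $\size$ we have $p < p_c$, bringing us into the subcritical regime required to invoke Theorem~\ref{Thrm:grimmett_bad_cluster}.

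Next I would apply Theorem~\ref{Thrm:grimmett_bad_cluster} together with translation invariance: for each vertex $v$ of $G'_w$, the probability that the bad cluster through $v$ has radius at least $\csize$ is bounded by $e^{-\psi(p)\csize}$ for some $\psi(p)>0$. All that matters here is that $\psi(p)$ is a strictly positive constant once $\size$ is large, so the exponent grows like $\size^3$. The last step is a union bound over the $O(\rho^2/m^2) = O(2^{2c\size})$ vertices of $G'_w$ that lie in $\mathcal{N}_\rho$, yielding
\[
P\bigl(\exists\text{ a bad cluster in }\mathcal{N}_\rho\text{ of radius }>\csize\bigr) \;\le\; O(2^{2c\size}) \cdot e^{-\psi(p)\,\size^3}.
\]
Since $\size = (2w+1)^2 \to \infty$ with $w$, the $\size^3$ term in the exponent dominates the $O(\size)$ contribution from the union bound and the right-hand side decays super-polynomially in $w$, in particular at rate $o(w^{-2})$, which is exactly the w.h.p.\ rate demanded by the paper's convention.

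I do not anticipate any real obstacle: the only mild points to verify are (i) the independence of distinct blocks, which is immediate from the disjointness of the partition, and (ii) the fact that $p < p_c$ holds for all large $\size$, which follows from the double-exponential smallness of $p$ given by Lemma~\ref{Lemma:goodblock}. Everything else is just bookkeeping and the application of the two cited results.
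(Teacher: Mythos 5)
Your proposal is correct and follows essentially the same route as the paper: the paper's (very terse) proof likewise takes $p$ to be the probability of a bad block, sets $k=N^3$, and invokes Theorem~\ref{Thrm:grimmett_bad_cluster} together with an implicit union bound over the exponentially many blocks in $\mathcal{N}_\rho$. Your version merely spells out the details the paper leaves implicit (independence from disjointness, $p<p_c$ via Lemma~\ref{Lemma:goodblock}, and the explicit comparison $2^{O(N)}e^{-\psi(p)N^{3}}\to 0$), all of which are sound.
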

\begin{proof} 
Let $p$, 
be the probability of having a bad block, and let $k=\csize$. By Theorem~\ref{Thrm:grimmett_bad_cluster}, it follows that w.h.p. there is no cluster of bad blocks containing a bad block with $l_1$-distance from its center greater than $\csize$ in a neighborhood with exponential radius in ${\size}$. 
\end{proof} 

Now divide $G_w$ into $\msize$-blocks and consider the union of particles inside a cluster of bad $\msize$-blocks and the set of particles outside the cluster whose $l_\infty$ distance to at least one node in the cluster is less than or equal to $\msize/4$.  Note that for sufficiently large $\size$, the probability of having a bad $\msize$-block is below the critical probability of percolation, and each $\msize$-block is a bad $\msize$-block independently of the others, hence by Lemma~\ref{Lemma:bad_cluster}, w.h.p. there is no cluster of bad $\size$-blocks with radius larger than $\csize$ in a neighborhood with exponential size in $\size$ on $G_w$. Let $\mathcal{X}_1$ denote an arbitrary set of bad $\size$-blocks and its outer boundary as defined above such that its radius is smaller than $\size^3$. Also, consider the set of all the particles outside $\mathcal{X}_1$ whose $l_\infty$ distance to at least one particle in $\mathcal{X}_1$ is less than or equal to $\msize/4$ and denote it by $\mathcal{X}_2$.

\  

\begin{definition}[Expandable Region]   $\mathcal{X}_1$ is called a $\theta$-\textit{expandable region}  if  there exists a set of flips of $\theta$-particles inside $\mathcal{X}_1$ leading to a $\theta$-affected node in $\mathcal{X}_2$ (see Figure~\ref{fig:expandable}). It is noted that if $\mathcal{X}_1$ is not an expandable region, the possible spread of $\theta$-affected nodes started in it will die out before reaching $\mathcal{X}_2$. The center of an expandable region is the node at the center of the smallest neighborhood that contains the expandable region.

\end{definition}

\begin{remark} It is noted that there is a difference between an expandable region and an expandable radical region described in Definition~\ref{Def:exradreg}. While an expandable radical region is a radical region that can make a $\mathcal{N}_{w/2}$-neighborhood monochromatic, an expandable region is capable of starting the spread of affected nodes beyond a ``local'' region as mentioned in the following lemma.
\end{remark}

We now want to argue about the distance of the closest expandable region to the origin. The following lemma shows how far the closest expandable region to the origin can be. We show this by establishing a relationship between radical regions and expandable regions. The following lemma exploits the fact that the closest radical region to the origin is also an expandable region w.h.p.

\begin{lemma*}\label{Lemma:rho}
Let $\epsilon > 0$ and $\epsilon' > e(\tau)$ as defined in (\ref{eq:ftau}). W.h.p. the $l_*$-distance of the origin from the node  at the center of the closest expandable region in the initial configuration that can make the origin affected is at most 
\begin{align*}
\rho =2^{0.5(1-H(\tau)+\epsilon)(1+\epsilon')^2N}.
\end{align*}
\end{lemma*}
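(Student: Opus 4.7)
I would prove the lemma in two stages: a probabilistic existence argument locating a radical region near the origin, and a structural argument upgrading that radical region into an expandable region whose center is nearby.

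For the first stage, fix $\rho' = \rho / N^{c_0}$ with $c_0$ chosen to absorb the polynomial-in-$N$ equivalence constants between $l_*$ and $l_2$ on $\mathbb{R}^2$ (these constants are polynomial by (\ref{eq:bilips})), and tile the $l_\infty$-ball $\mathcal{N}_{\rho'}(0)$ by $\Theta((\rho'/w)^2)$ pairwise disjoint neighborhoods of radius $(1+\epsilon')w$. By Lemma~\ref{Lemma:radical_region_prob} each is independently a radical region with probability
$$p_{\epsilon'} \ge 2^{-[1-H(\tau'')](1+\epsilon')^2 N - o(N)},$$
and $\tau'' \to \tau$ as $N \to \infty$. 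Substituting $\rho = 2^{0.5(1-H(\tau)+\epsilon)(1+\epsilon')^2 N}$ gives
$$P\bigl(\text{no radical region in } \mathcal{N}_{\rho'}(0)\bigr) \le (1-p_{\epsilon'})^{\Theta((\rho'/w)^2)} \le \exp\bigl(-2^{0.5\epsilon(1+\epsilon')^2 N - o(N)}\bigr),$$
which is $o(w^{-2})$. Hence w.h.p.\ there is a radical region $R$ whose center lies within $l_*$-distance $\rho$ of the origin.

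For the second stage, I show the $N$-block $\mathcal{M}$ containing $R$ is bad (shrinking $\rho'$ slightly I may assume $R$ lies entirely inside a single $N$-block). Because $R$ has fewer than $\hat{\tau}(1+\epsilon')^2 N$ particles in state $\theta$ and $\hat{\tau} \to \tau < 1/2$, for large $N$ some $w/2$-neighborhood intersection $I \subset R$ satisfies $W_I - N_I/2 \gg N^{1/2+\epsilon}$, violating the good-block condition. Thus $\mathcal{M}$ belongs to a cluster of bad $N$-blocks whose radius, by Lemma~\ref{Lemma:bad_cluster}, is at most $N^3$ w.h.p.; the associated $\mathcal{X}_1$ (the bad cluster together with its $N/4$-halo) is well defined, and its center still lies at $l_*$-distance at most $\rho$ from the origin since the $O(N^3)$ shift from the center of $R$ is negligible compared to $\rho$.

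It remains to produce a sequence of $\theta$-flips inside $\mathcal{X}_1$ yielding a $\theta$-affected node in $\mathcal{X}_2$. Lemma~\ref{Lemma:Trigger} supplies at most $(w+1)^2$ flips in $R$ that render the central $w/2$-ball of $R$ a monochromatic $\bar{\theta}$-block. Starting from this seed, Lemma~\ref{Lemma:monoch_spread_1} ensures w.h.p.\ that every neighborhood of radius up to $2^{cN}$ in the vicinity is a region of expansion, so the $\bar{\theta}$-block can be grown one p-stable particle at a time along its boundary until the cascade crosses the $N/4$-halo and deposits a $\theta$-affected node in $\mathcal{X}_2$; all flips stay inside $\mathcal{X}_1$, so $\mathcal{X}_1$ is a $\theta$-expandable region.

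The main obstacle I expect is the last step: verifying that the greedy cascade extending the $\bar{\theta}$-seed never stalls before reaching $\mathcal{X}_2$ and never requires a flip outside $\mathcal{X}_1$. Handling this cleanly requires combining the trigger mechanism of Lemma~\ref{Lemma:Trigger} with a careful inductive use of the region-of-expansion property of Lemma~\ref{Lemma:monoch_spread_1}, together with an FKG-type argument to ensure the various w.h.p.\ events (a radical region within $\mathcal{N}_{\rho'}(0)$, the surrounding configuration being a region of expansion, and the bad-cluster radius bound) can be assumed to hold simultaneously.
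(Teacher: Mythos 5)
Your proposal is correct and follows essentially the same route as the paper: locate a radical region within the ball via Lemma~\ref{Lemma:radical_region_prob}, then upgrade it to an expandable region using Lemma~\ref{Lemma:Trigger}, the bad-cluster bound of Lemma~\ref{Lemma:bad_cluster}, the region-of-expansion property of Lemma~\ref{Lemma:monoch_spread_1}, and FKG to combine the conditioning; your explicit bridge (radical region $\Rightarrow$ bad $N$-block $\Rightarrow$ a well-defined $\mathcal{X}_1$ nearby) is a step the paper leaves implicit. One tiny slip: shrinking $\rho'$ does not let you assume $R$ sits inside a single $N$-block (that depends on the alignment of $R$ with the block tiling, not on $\rho'$), but since $R$ meets at most four $N$-blocks the pigeonhole argument still forces one of them to be bad, so the conclusion stands.
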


\begin{proof}
Consider a neighborhood with radius $2^{0.5(1-H(\tau)+\epsilon/2)(1+\epsilon')^2N}$ centered at the origin. Using Lemma~\ref{Lemma:radical_region_prob}, w.h.p. there exists a radical region in this neighborhood. 
Using Lemma~\ref{Lemma:Trigger} a radical region is expandable w.h.p. 
Using Lemma~\ref{Lemma:bad_cluster}, w.h.p. there is no cluster of bad blocks with radius larger than $\csize$ in a neighborhood with radius $\rho$ centered at the origin in the initial configuration. This implies that the expandable radical region is not surrounded by bad blocks that can potentially stop its spread.
Now consider a neighborhood with radius $3\csize$ centered at the center of the radical region. Since the event of having an expandable radical region at the center of this neighborhood and the event of this region being a region of expansion are both increasing events in the change of a $\theta$-particle to a $\bar{\theta}$-particle, by an application of  FKG inequality \cite{fortuin1971correlation} and using Lemma~\ref{Lemma:monoch_spread_1} this neighborhood is a region of expansion w.h.p. Now, since the expandable radical region can turn the entire region of expansion monochromatic, this implies that this region is an expandable region and this completes the proof.
\end{proof}

The following lemma shows that in the initial configuration w.h.p. there is no part of an expandable region in an annulus around the origin whose width is $\rho'$.

\begin{lemma*}\label{Lemma:rhop}
Let $\epsilon > 0$ and $\epsilon' > e(\tau)$ as defined in (\ref{eq:ftau}). W.h.p. there is no node that belongs to an expandable region in \begin{align*}
B_{l_*}{(0,\rho+\rho')}\setminus B_{l_*}{(0,\rho)},
\end{align*}
 for
\begin{align*}
\rho =2^{0.5(1-H(\tau)+\epsilon/2)(1+\epsilon')^2\size}, \\
\rho' = 2^{(1-H(\tau)-\epsilon)(1-0.5(1+\epsilon')^2)\size + o(\size)}.
\end{align*}  
\end{lemma*}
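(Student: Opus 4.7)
The plan is to apply a direct union bound over the annulus $B_{l_*}(0,\rho+\rho')\setminus B_{l_*}(0,\rho)$. The annulus contains at most $O(\rho\cdot\rho')$ nodes, so if I can bound the probability that any fixed node $v$ lies in a $\theta$-expandable region by some quantity $q$ satisfying $\rho\rho' q = o(1)$, the claim follows.

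The central step is to reduce the event ``$v$ lies in an expandable region'' to the event ``a radical region of radius $(1+\epsilon')w$ is present near $v$.'' By the definition of expandable region, for $v$ to lie in $\mathcal{X}_1$ there must be a cluster of bad $\msize$-blocks near $v$ admitting a dynamics-respecting sequence of $\theta$-flips that eventually creates a $\theta$-affected node in $\mathcal{X}_2$. The first flip requires a p-stable $\theta$-particle in the initial configuration, and, following the reasoning of Lemma~\ref{Lemma:Trigger} in reverse, the cascade can only propagate far enough to reach $\mathcal{X}_2$ if the initial configuration inside $\mathcal{X}_1$ contains a radical region---a concentration of $\bar{\theta}$-particles strong enough to ignite and sustain the chain of p-stable flips. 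By Lemma~\ref{Lemma:radical_region_prob}, the probability of a radical region centered at a given location is at most $p_{\epsilon'} \le 2^{-(1-H(\tau))(1+\epsilon')^2\size + o(\size)}$, and the $O(\size^2)$ possible centers within a bad $\msize$-block contribute only a polynomial factor that is absorbed in the $o(\size)$ term.

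With this reduction the union bound yields
\[
P(\exists v \in \text{annulus}: v \text{ lies in a } \theta\text{-expandable region}) \le C\,\rho\cdot\rho'\cdot p_{\epsilon'}.
\]
Setting $\alpha = 1-H(\tau)$ and $\beta=(1+\epsilon')^2$, a short calculation shows that the exponent of $\rho\cdot\rho'$ is $[\alpha + \epsilon(3\beta/4-1)]\size + o(\size)$, while $p_{\epsilon'}$ contributes $-\alpha\beta\size+o(\size)$. Summing gives $-\alpha(\beta-1)\size + O(\epsilon)\size + o(\size)$. Since $\epsilon' > e(\tau) > 0$ forces $\beta > 1$, and since $\epsilon$ can be chosen small (moreover near $\tau^*$ one has $\beta \approx 6/5$ so $3\beta/4 - 1 < 0$), this exponent is strictly negative for sufficiently large $\size$, and the bound decays exponentially.

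The main obstacle I anticipate is rigorously establishing the reduction ``expandable $\Rightarrow$ radical.'' The definition of expandability only demands the existence of some dynamics-respecting flip sequence producing an affected node in $\mathcal{X}_2$, without constraining its structure; one must show that any such sequence forces the initial configuration inside $\mathcal{X}_1$ to be very $\bar{\theta}$-heavy in some subregion of radius $\Theta(w)$. This requires a combinatorial argument that tracks the cumulative $\bar{\theta}$-deficit each successive p-stable flip must overcome as the cascade traverses the distance $\approx \msize/4$ from the interior of $\mathcal{X}_1$ to $\mathcal{X}_2$, and shows that, absent a radical-region configuration, the supply of initially near-p-stable particles is too sparse to sustain such a cascade.
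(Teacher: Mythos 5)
Your overall skeleton (union bound over the $\approx\rho\rho'$ nodes of the annulus times a per-node probability, with the exponent bookkeeping showing $\log_2(\rho\rho')=\bigl[(1-H(\tau))+\epsilon(3\beta/4-1)\bigr]\size+o(\size)$) matches the paper's. But the per-node probability you plug in rests on the reduction ``expandable $\Rightarrow$ radical region nearby,'' and that reduction is exactly the step you flag as an obstacle: it is not proved, it is the reverse direction of Lemma~\ref{Lemma:Trigger} (which only shows radical regions are expandable w.h.p., not conversely), and nothing in the paper supports it. An expandable region is anchored on a cluster of bad $\msize$-blocks, and a bad block only violates the balance property at the $\size^{1/2+\epsilon}$ scale --- an enormously milder deviation than the $\Theta(\size)$ deficit of $\theta$-particles that defines a radical region. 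So your central probability estimate $q\le p_{\epsilon'}\approx 2^{-(1-H(\tau))(1+\epsilon')^2\size}$ is unsubstantiated, and the argument as written does not close.

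The fix is to weaken the necessary condition, which is what the paper does: for $\mathcal{X}_1$ to be expandable there must exist at least one p-stable ($\theta$-affected) node in the initial configuration inside (a slight enlargement of) the annulus --- otherwise no first flip can occur. By Lemma~\ref{Lemma:unstableprob} that event has probability $O\bigl(2^{-[1-H(\tau')]\size}/\sqrt{\size}\bigr)$ per node, and since $H(\tau)-H(\tau'')=o(1)$ and $\rho'$ was chosen precisely so that $\rho\rho'\le 2^{(1-H(\tau)-\epsilon'')\size+o(\size)}$ for some $\epsilon''>0$ (your own observation that $3\beta/4-1<0$ for the relevant $\epsilon'$ is what makes this work), the union bound already closes with this much weaker, but actually provable, per-node bound. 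One further detail you omit: because $\mathcal{X}_1\cup\mathcal{X}_2$ can extend a distance up to order $\csize$ from any of its nodes, the union bound must be taken over the annulus enlarged by $2\csize$, which is why the paper first invokes Lemma~\ref{Lemma:bad_cluster} to control the radius of bad clusters; this only perturbs the count by a negligible factor.
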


\begin{proof}
 By Lemma~\ref{Lemma:bad_cluster}, w.h.p. there is no cluster of bad blocks with radius larger than $\csize$ in a neighborhood with radius $2(\rho+\rho')$ centered at the origin in the initial configuration. Also, for large $\size$ we have,
\begin{align*}
\mbox{Number of nodes in } B_{l_*}{(0,\rho+\rho'+2\csize)}\setminus B_{l_*}{(0,\rho)} \le  8\rho \rho'.
\end{align*}
Also,  if we have 
\begin{align*}
\mbox{Number of nodes in } B_{l_*}{(0,\rho+\rho'+2\csize)}\setminus B_{l_*}{(0,\rho)} = 2^{(1-H(\tau)-\epsilon)N+o(N)},
\end{align*}
then w.h.p. there will be no location inside $B_{l_*}{(0,\rho+\rho'+2\csize)}\setminus B_{l_*}{(0,\rho)}$ for which a particle would be p-stable. Here we have used the fact that $(H(\tau) - H(\tau'')) = o(1)$, and the fact that having a p-stable particle is a necessary condition for having an expandable region.
\end{proof}


The following lemma shows that w.h.p. an expandable region can lead to the formation of a $\theta$-affected $w$-block.

\begin{lemma*}\label{Lemma:expandable_to_m'}
W.h.p. there exists a sequence of possible flips in $\mathcal{X}_1 \cup \mathcal{X}_2$ that can lead to a $\theta$-affected $w$-block centered in $\mathcal{X}_1$. 
\end{lemma*}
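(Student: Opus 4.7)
The plan is to compose the flips producing an affected node in $\mathcal{X}_2$ with the cascade mechanism of Lemma~\ref{Lemma:pre_expandable} to manufacture an affected $w$-block centered in $\mathcal{X}_1$. First I would use the definition of a $\theta$-expandable region to produce a sequence $S_1$ of $\theta$-flips confined to $\mathcal{X}_1$ which, upon execution, creates at least one $\theta$-affected node $u \in \mathcal{X}_2$. Since every flip in $S_1$ occurs in $\mathcal{X}_1$ and only particles within the $w$-neighborhood of $u$ can affect its labeling, the node $u$ necessarily lies at $l_\infty$-distance at most $w$ from $\mathcal{X}_1$. By restricting $S_1$ to a minimal initial segment, we may further assume that $u$ is the first such affected node produced.

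Next I would apply Lemma~\ref{Lemma:pre_expandable} to the $10w$-neighborhood $\mathcal{N}$ centered at $u$. Because $u$ is within distance $w$ of $\mathcal{X}_1$ and $10w \ll \msize/4$, the ball $\mathcal{N}$ is contained in $\mathcal{X}_1 \cup \mathcal{X}_2$ and overlaps $\mathcal{X}_1$ on more than half of its volume. The balance property required by Lemma~\ref{Lemma:pre_expandable} holds w.h.p.\ over the initial configuration of $\mathcal{N}$ by the good-block bound (Lemma~\ref{Lemma:goodblock}) and is untouched by $S_1$-flips outside $\mathcal{N}$. The proof of Lemma~\ref{Lemma:pre_expandable} treats the cause of $u$'s affectedness as a black box, so it applies here to produce w.h.p.\ a sequence $S_2$ of $\theta$-flips in $\mathcal{N}$ whose execution following $S_1$ creates a $\theta$-affected $w$-block inside $\mathcal{N}$. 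Concatenating $S_1$ and $S_2$ yields the required sequence of flips inside $\mathcal{X}_1 \cup \mathcal{X}_2$.

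The main technical step will be verifying that the resulting $w$-block can be centered \emph{in $\mathcal{X}_1$}, and not merely inside $\mathcal{N}$. Since $u$ lies within distance $w$ of $\mathcal{X}_1$, three of the four $w$-blocks that appear in the proof of Lemma~\ref{Lemma:pre_expandable}---namely the blocks sharing $u$ as a corner and partitioning the immediate neighborhood of $u$---lie entirely inside $\mathcal{X}_1$. The cascade construction of that lemma locates the output affected $w$-block at a node $v'$ drawn from one of these four blocks; I plan to pick $v' \in \mathcal{X}_1$ by invoking this freedom. The $\theta$-excess carried by the bad $\msize$-blocks inside $\mathcal{X}_1$ only enlarges the pool of available p-stable $\theta$-particles needed for the cascade, so steering the construction into $\mathcal{X}_1$ cannot cause it to fail. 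This delivers a $\theta$-affected $w$-block centered in $\mathcal{X}_1$, as claimed.
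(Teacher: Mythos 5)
Your proposal follows the same route as the paper's proof: use the definition of expandability to manufacture a $\theta$-affected node $u$ in $\mathcal{X}_2$ close to $\mathcal{X}_1$, then feed $u$ into the cascade construction of Lemma~\ref{Lemma:pre_expandable} to upgrade the single affected node to a $\theta$-affected $w$-block; your discussion of why the output block can be centered in $\mathcal{X}_1$ is actually more explicit than the paper's (and the cleanest way to see it is that all the causal flips behind $u$'s affectedness are $S_1$-flips lying in $\mathcal{X}_1$, so the nodes $v$ and $v'$ selected in that construction automatically lie in $\mathcal{X}_1$; your claim that three of the four $w$-blocks lie \emph{entirely} in $\mathcal{X}_1$ is not true when $u$ sits at distance exactly $w$ from $\mathcal{X}_1$, but it is not needed). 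The one ingredient you are missing is the paper's closing step: Lemma~\ref{Lemma:pre_expandable} is a statement about the \emph{modified} model, whose initial configuration is sampled conditionally on the ambient neighborhood being a region of expansion with no affected nodes, so to import its conclusion one must couple with that model, and for the coupling one must check that the $\csize$-block containing $\mathcal{X}_1\cup\mathcal{X}_2$ is a region of expansion \emph{despite} the conditioning on the presence of the bad cluster and on $\mathcal{X}_1$ being expandable. The paper does this by noting that the conditioning events and the region-of-expansion event are all increasing in the change of a $\theta$-particle to a $\bar{\theta}$-particle, so the FKG inequality together with Lemma~\ref{Lemma:monoch_spread_1} yields the region-of-expansion property w.h.p.\ under the conditioning. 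Your remark that the excess carried by the bad blocks ``only enlarges the pool of available p-stable $\theta$-particles'' is exactly the right monotonicity intuition (note the excess is of $\bar{\theta}$-particles, not $\theta$-particles), but it should be promoted to this explicit FKG step; with that addition your argument matches the paper's.
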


\begin{proof}
Since $\mathcal{X}_1$ is an expandable region, there exists a sequence of flips leading to a new affected node inside a good $\msize/4$-block. By coupling with our modified model and using Lemma~\ref{Lemma:pre_expandable} we can conclude that, w.h.p. there exists a sequence of possible flips in the $\msize/4$-block that can lead to a monochromatic $w$-block in it. Now with an application of Lemma~\ref{Lemma:monoch_spread_1} and using the fact that the event of having $\mathcal{X}_1 \cup \mathcal{X}_2$ inside a $\csize$-block and the event of having that block   being a region of expansion are positively correlated, we can conclude that the latter event occurs w.h.p. and this completes the proof.  
\end{proof}

We are now ready to begin the proof of Theorem~\ref{Thrm:main_thrm}.

\begin{proof}[Proof of Theorem~\ref{Thrm:main_thrm}]
 We first show that the size of the largest monochromatic ball is at least exponential in $\size$ w.h.p. Let $\epsilon' > e(\tau)$ with $e(\tau)$ as defined in (\ref{eq:ftau}), and $\epsilon'' > 0$, such that
\begin{align*}
&a(\tau) - \epsilon \le  \left(1-H(\tau)-\epsilon''\right) \left(2-(1+\epsilon')^2\right), \\
&b(\tau) + \epsilon \ge \left(1+\epsilon'\right)^2(1-H(\tau)+\epsilon'').
\end{align*}
Let $\ti^* = 2^{(a(\tau) + \epsilon)\size}$. We wish to show that for all $\ti \ge \ti^*$,
\begin{align*}
M_\ti \ge 2^{\left(a(\tau) - \epsilon\right)\size} \mbox{ w.h.p.}
\end{align*}
By Lemma~\ref{Lemma:bad_cluster}, w.h.p. there is no cluster of bad blocks with radius larger than $\csize$ in a neighborhood with radius $2^{\size}$ centered at the origin in the initial configuration (event $A_0$). 

Let 
\begin{align*}
\rho =2^{0.5(1-H(\tau)+\epsilon''/2)(1+\epsilon')^2\size}, \\
\rho' = 2^{(1-H(\tau)-\epsilon'')(1-0.5(1+\epsilon')^2)\size + 2\log_2 \size},\\
\rho'' = 2^{(1-H(\tau)+\epsilon'')\left((1+\epsilon')^2-1\right)\size},
\end{align*} 


We let $\size$ be sufficiently large so that there exists a norm $l_*$ and $C>0$ such that (\ref{eq:Tess1}) in Proposition~\ref{Prop:Tess1} is satisfied for $\ti = \rho'^{1/3}$. We also assume that $\size$ is sufficiently large, such that $\rho'/\size > w^3$, $LL' < \rho'/4 -\rho'^{1/3}-(\size^4+\size)\rho''-\size$ where 
\begin{align*}
L= \left\lceil \frac{\rho}{\rho'/4-2\size^C\sqrt{\rho'}\log^{3/2} \rho' - \size}\right\rceil, \\
L'= \left\lceil 2\size^C\sqrt{\rho'}\log^{3/2} \rho' + \size \right\rceil.
\end{align*}

Let the closest expandable region to the origin in the $l_*$ norm be a {$\theta$-expandable} region and let $\mathcal{N}_{\rho'/4}$ be a neighborhood at the origin with radius $\rho'/4$, now let
\begin{align*}
\mathcal{T}(\rho'/4) = \inf\left\{\ti \  \given[\Big] \  \exists \mbox{ a $\bar{\theta}$-affected node in } \mathcal{N}_{\rho'/4} \right\}, \\
A = \left\{ \mbox{The origin is contained in a firewall of radius $\rho'/\size$ before } \mathcal{T}(\rho'/4)  \right\}.
\end{align*} 

We now want to show
\begin{align} \label{eq:P_A}
A \mbox{ occurs w.h.p.}
\end{align}
To show this, we condition $A$ on a few events that we show to occur w.h.p. and argue that since event $A$ conditional on these events also occurs w.h.p., $A$ occurs w.h.p. 


 Let $X$ denote the $l_*$-distance from the origin to the closest node in an expandable region, that without loss of generality, we have assumed to be of type $\theta$. Let
\begin{align*}
&A_1 = \left\{ \mbox{$X\le \rho$, at $\ti=0$}  \right\},  \\
&A_2 = \left\{ \nexists \mbox{ a $\bar{\theta}$-expandable region in } B_{l_*}(0,X+\rho')\setminus B_{l_*}(0,X) \mbox{ at $\ti=0$} \right\}.
\end{align*}

\begin{figure}[!t] 
\centering
\includegraphics[width=4in]{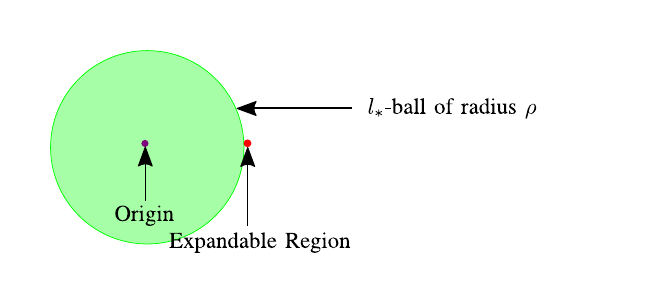}
\caption{W.h.p. the closest expandable region to the origin will not be farther than $\rho$ with respect to the $l_*$-norm. } 
\label{fig:closest_expandable} 
\end{figure}

Consider an $l_*$-ball of radius $\rho$. According to Lemma~\ref{Lemma:rho}, w.h.p. there is an expandable region in this ball (see Figure~\ref{fig:closest_expandable}). This implies
\begin{align*}
A_1 \mbox{ occurs w.h.p.} 
\end{align*}
Using the fact that the existence of a $\theta$-expandable region  in $B_{l_*}(0,X+\rho')\setminus B_{l_*}(0,X)$ can only increase the probability of event $A_2$ (since they are both increasing events in the change of a $\theta$-particle to a $\bar{\theta}$-particle, by an application of FKG inequality \cite{fortuin1971correlation} for the initial configuration they are positively correlated), and the fact that conditional on event $A_1$, event $A_2$ would have the smallest probability when $X=\rho$ (since it implies having an annulus with the largest area), we let
\begin{align*}
&A'_2 = \left\{ \nexists \mbox{ a $\bar{\theta}$-expandable region in } B_{l_*}(0,\rho+\rho')\setminus B_{l_*}(0,\rho) \mbox{ at $t=0$} \right\},
\end{align*} 
and by an application of FKG inequality \cite{fortuin1971correlation} for the initial configuration we have
\begin{align*}
P(A_2) \ge P\left(A_2 \given[\Big] A_1\right) P(A_1) \ge P\left(A_2 \given[\Big] X = \rho\right) P(A_1) \ge P(A'_2)P(A_1).
\end{align*}
Using Lemma~\ref{Lemma:rhop}, event $A'_2$ occurs w.h.p., hence we have
\begin{align*}
A_2 \mbox{ occurs w.h.p.}
\end{align*}

Now consider the line segment from the center of the closest expandable region to the origin. Let  $\mathcal{N}$ denote the set of particles such that their $l_*$-distances from at least one point on the line segment is less than or equal to $2\size^{c'}\rho'$ where $c'$ is the constant $C_1$ in Lemma~\ref{Lemma:lemma1'}. Let
\begin{align*}
\begin{split}
A_3 = \left\{ \nexists \mbox{ a $\bar{\theta}$-affected node in } \mathcal{N} \mbox{ and it is a region} \right. \\
\left. \mbox{ of expansion at $\ti=0$ }  \right\}.
\end{split}
\end{align*}

Since event $A_3$ has the smallest probability when $X=\rho$ (again, since this will lead to having the largest area for $\mathcal{N}$), and since the existence of an expandable region can only increase the probability of this event, using FKG inequality and with an application of Lemma~\ref{Lemma:unstableprob} and Lemma~\ref{Lemma:monoch_spread_1}, we can conclude that 
 \begin{align*}
A_3 \mbox{ occurs w.h.p.}
\end{align*}



 \begin{figure}[!t] 
\centering
\includegraphics[width=4in]{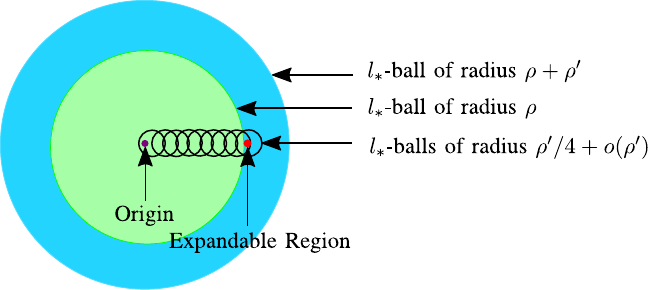}
\caption{The gradual spreads of affected nodes from the expandable region towards the origin in time intervals of $\rho'/4$. After each time interval, all the nodes inside the corresponding $l_*$-ball of radius $\rho'/4-o(\rho')$ are going to be affected.} 
\label{fig:gradual_growth2}
\end{figure} 
We define the \textit{spread} of affected nodes as the supremum of the $l_*$-distances of affected nodes on paths that start from a given node and that have at most a given first passage time.
We are now going to consider a sequence of gradual spreads of the affected nodes first starting from the expandable region and thereafter from the node at the center of an affected block with smallest $l_*$-distance towards the origin in $\rho'/4$ time intervals (see Figure~\ref{fig:gradual_growth}). 

To consider the spread of the $\theta$-affected nodes 
we first notice that using Lemma~\ref{Lemma:expandable_to_m'}, w.h.p. there exists a sequence of less than $\size^4$ flips that can create a $\theta$-affected $w$-block centered at the center of the expandable region. Let us denote this event by $A_4$. Hence we have
\begin{align*}
A_4 \mbox{ occurs w.h.p. }
\end{align*} 
Let $T_{\size}$ denote the time it takes until $\size^4$ flips occur one by one. Let $A'_5 = \{T_{\size} < \rho'^{1/3} \}$.  Standard concentration bounds imply that there exist $c>0$ such that this event occurs with probability at least $1-\exp(-c\rho'^{1/3})$. Let $A_5$ denote the event that the time that it takes until we have a $\theta$-affected block inside the expandable region is less than $\rho'^{1/3}$. We have
\begin{align*}
P(A_5)  \ge P\left(A'_5\right) \ge  1-\exp\left(-c\rho'^{1/3}\right).
\end{align*} 
Hence we have  
\begin{align*}
A_5 \mbox{ occurs w.h.p.}
\end{align*}

Let $A_6$ denote the event of having less than $\rho''$ affected nodes of each type in the $l_*$-ball of radius $\rho$. Standard concentration bounds imply that this event also occurs w.h.p., hence we have 
\begin{align*}
A_6 \mbox{ occurs w.h.p.}
\end{align*}
Now, since we know that w.h.p. there is no $\bar{\theta}$-expandable region inside $B_{l_*}(0,\rho+\rho')$, this implies that the spread   of $\bar{\theta}$-affected nodes is going to die out quickly in this ball and there will not be any spreads of $\bar{\theta}$-affected nodes beyond a radius of $O(\size^3)$ from any of the $\bar{\theta}$-affected nodes. Hence, we will consider the possible spread of $\bar{\theta}$-affected nodes from expandable regions outside of $B_{l_*}(0,\rho+\rho')$ towards the origin. 
In order to compute a lower bound on the time of the spread of these $\bar{\theta}$-affected nodes, we proceed as follows.
We observe that there are at most $\rho''$ affected nodes in $B_{l_*}(0,\rho+\rho')$ w.h.p, therefore if we remove a set of annuli centered at the origin from this ball such that each annulus contains at least one affected node of any type along with the possible clusters of bad blocks corresponding to that affected node and also a margin of good blocks around these clusters, we would have a ball of radius at least $\rho+\rho'-(\size^4+\size)\rho''$. Furthermore, we argue that since the event of having larger spreads of $\bar{\theta}$-affected nodes in a given time interval and the event of having this ball being a region of expansion of type $\bar{\theta}$ are both increasing in the change of a $\bar{\theta}$-particle to a $\theta$-particle, by assuming that this ball is a region of expansion of type  $\bar{\theta}$ we would   get a lower bound on the time  of the spread of the $\bar{\theta}$-affected nodes. Hence, from this point forward, to consider the spread of $\bar{\theta}$-affected nodes towards the origin we consider the ball $B_{l_*}(0,\rho+\rho'-(\size^4+\size)\rho'')$ and assume it is a region of expansion and does not contain any affected nodes. 

 Let $A_7$ denote the event that the $l_*$ radius of the spread of the $\bar{\theta}$-affected nodes in a time interval of $\rho'^{1/3}$ -- that conditional on $A_i$, $i=1,2,\ldots,6$ is at most needed for the formation of the $\theta$-affected $w$-blocks in the first gradual spread (as described above) -- is less than  $\rho'^{1/3}+ \size^C\rho'^{1/6}\log^{3/2}\rho' $ in the $l_*$ norm in all directions in the annulus around the origin. To show that this event occurs w.h.p. we consider the $l_*$-ball $B_{l_*}(0,\rho+\rho'-(\size^4+\size)\rho'')$ described above. By coupling with our modified model, it follows from the proof of Proposition~\ref{Prop:Tess1} and Theorem~\ref{Thrm:Shape_theorem} that the $l_*$ radius of the spread of any set of $\bar{\theta}$ expandable region adjacent to this ball inside this ball will be less than $\rho'^{1/3}+ \size^C\rho'^{1/6}\log^{3/2}\rho'$, hence we can conclude that  
\begin{align*}
A_7  \mbox{ occurs w.h.p. }
\end{align*}

Now let $A_8$ denote the event that the origin is contained in a $w$-block of $\theta$-affected nodes before there are any $\bar{\theta}$-affected nodes in an $l_*$-ball with radius $\rho'/2$ around the origin. To show that this event occurs w.h.p. we consider $L$ time intervals of size $\rho'/4$ and argue that, first of all, in every one of these time intervals the spread of $\theta$-affected nodes from the closest $\theta$-affected $w$-block towards the origin (first started from the expandable region) is at least $\rho'/4-\size^C(\rho')^{1/2}\log^{3/2} \rho'$. 
To see this, consider a line segment of length $\rho$ and let  $\mathcal{N}'$ denote the set of particles such that their $l_*$-distances from at least one point on the line segment is less than or equal to $\size^{c'}\rho'$ where $c'$ is the constant $C_1$ in Lemma~\ref{Lemma:lemma1'}.  We argue that not having $\theta$-affected nodes in $\mathcal{N}'$, and having smaller spreads in a given time interval in this neighborhood are both increasing events in the change of a $\bar{\theta}$-particle to a $\theta$-particle hence using the FKG inequality they are positively correlated. 
Let us consider a neighborhood with the shape of $\mathcal{N}'$ that does not contain any affected nodes and is a region of expansion. By coupling with our modified model, it follows from the proof of Proposition~\ref{Prop:Tess1} and Theorem~\ref{Thrm:Shape_theorem} that the spread of an affected block in this neighborhood will be at least $\rho'/4-\size^C(\rho')^{1/2}\log^{3/2} \rho'$. 
It follows that all the spreads of $\theta$-affected nodes started by the expandable region towards the origin during each interval will be at least $\rho'/4-\size^C(\rho')^{1/2}\log^{3/2} \rho'$. 

We now show that the possible spreads of $\bar{\theta}$-affected nodes started from outside of $B_{l_*}(0,\rho+\rho'-w)$ are not going to reach or in any way interfere with any of these spreads of $\bar{\theta}$-affected nodes and will not reach the $B_{l_*}(0,\rho'/2)$ before having the origin contained in a $\theta$-affected $w$-block (see Figure~\ref{fig:gradual_growth2}). To show this, since we are conditioning on events $A_i$, $i=1,2,\ldots,7$, and since $LL' <  \rho'/4-\rho'^{1/3}-(\size^4+\size)\rho''-\size$ it suffices to show that the spreads of $\bar{\theta}$-affected nodes started from outside $B_{l_*}(0,\rho+\rho'-(\size^4+\size)\rho''-\rho'^{1/3}-\size^c\rho''-\size)$ which does not contain any affected nodes and is assumed to be a region of expansion in every time interval of size $\rho'/4$ is at most $\rho'/4+\size^C(\rho')^{1/2}\log^{3/2} \rho'$, which guarantees that not only these spreads will not interfere with the spreads of $\theta$-affected nodes but also they will not reach  $B_{l_*}(0,\rho'/2)$ before having the origin contained in a $\theta$-affected $w$-block. By coupling with our modified model, it follows from the proof of Proposition~\ref{Prop:Tess1} and Theorem~\ref{Thrm:Shape_theorem} that this event also occurs w.h.p. Hence we can conclude that 
 \begin{align*}
 A_8 \mbox{ occurs w.h.p.}
 \end{align*}

This also implies that by the time the origin is contained in a $\theta$-affected $w$-block, the $\bar{\theta}$-affected nodes are still in $l_*$-distance of more than $\rho'/2$ from the origin w.h.p. 


Now let $r$ be proportional to $\rho'/\size$. Let us denote the event that the time it takes until a number of affected nodes equal to the number of all the particles in a firewall with radius $r$ centered at the origin and a line of width $2\sqrt{\size}$ from the origin to the firewall make a flip one by one being smaller than $\rho'/4$ by $A'_9$  (see Figure~\ref{fig:firewall_formation}).  Standard concentration bounds imply $A'_9 \mbox{ occurs w.h.p.}$

Let $A_{9}$ denote the event that this firewall is formed in a time interval smaller than $\rho'/4$. We have $P(A_{9}) \ge P(A'_{9})$ and since $A'_{9}$ occurs w.h.p. we have
\begin{align*}
A_{9} \mbox{ occurs w.h.p.}
\end{align*} 

With a similar argument for event $A_8$, w.h.p. the spreads of all the possible $\bar{\theta}$-affected nodes will be smaller than $\rho'/3$ for this interval (event $A_{10}$). Hence, we have
\begin{align*}
A_{10} \mbox{ occurs w.h.p.}
\end{align*}

 Finally we can write
 \begin{align*}
 P(A) \ge P\left(A\given[\Big] A_0,A_1,\ldots,A_{10}\right)P(A_0\cap A_1 \cap \ldots \cap A_{10}),
 \end{align*}
hence, we have that 
\begin{align*}
A \mbox{ occurs w.h.p.}
\end{align*}
Now, using Lemma~\ref{Lemma:monoch_spread_1} w.h.p. the interior of the firewall is a region of expansion in the initial configuration and since w.h.p. only $\theta$-affected nodes have reached this region by the time of the formation of the firewall, it is still a region of expansion for the state $\bar{\theta}$. Now, since the sum of the times of the gradual spreads, formation of the firewall, and the time that it takes until the interior of the firewall becomes monochromatic (by a standard concentration bound) is less than $\ti^*$ w.h.p., for all $\ti\ge \ti^*$ it will be monochromatic w.h.p. and this proves the lower bound.

\begin{figure}[!t] 
\centering
\includegraphics[width=1.6in]{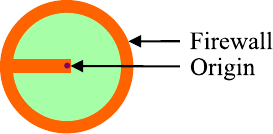}
\caption{Formation of a firewall around the origin.} 
\label{fig:firewall_formation}
\end{figure}


Next, we show the corresponding upper bound. 
Consider four neighborhoods with radius $\size(\rho+\rho')$ such that each of them shares the origin as a different corner node. Divide the union of these neighborhoods into neighborhoods of radius $\rho+\rho'$ in an arbitrary way and consider the nodes at the center of each of these neighborhoods. Now using the above result we have that for $\ti\ge \ti^*$, w.h.p., all these central nodes will have a monochromatic balls of size at least $2^{a(\tau)-\epsilon\size}$. Also it is easy to see that for $\ti\ge \ti^*$, w.h.p. all the four neighborhoods defined above will have particles with exponentially large monochromatic balls of both states. This implies that for all $\ti\ge \ti^*$ the size of the monochromatic ball located at the origin is at most $4\size^2(\rho+\rho')^2$. 

\end{proof}

\subsection{Extension to intolerance parameters larger than 1/2}\label{Subsec:Extension}
As mentioned before, while for $\tau < 1/2$ unstable particles are also p-stable, for $\tau > 1/2$ this is not the case.

Let $\bar{\tau} = 1 - \tau + 2/{\size}$. A p-stable particle of type $\theta$ is a particle for which $W < \bar{\tau}{\size}$ where $W$ is the number of $\theta$ particles in its neighborhood. The reason for adding the term $2/{\size}$ in the definition is to account for the strict inequality that is needed for being p-stable and the flip of the particle at the center of the neighborhood which adds one particle of its type to the neighborhood. 
All our results can be easily extended for $\tau > 1/2$ using $\bar{\tau}$ defined above. 
For example, a radical region in this case is a neighborhood $\mathcal{N}_S$ of radius $S=(1+\epsilon')w$ such that $W_S <  \bar{\tau}'(1+\epsilon')^2{\size} $, where    $\epsilon \in (0,1/2)$ and
\begin{equation*} %
\bar{\tau}' = \left(1-\frac{1}{\bar{\tau} {\size}^{1/2-\epsilon}}\right)\bar{\tau}. 
\end{equation*}

By replacing $\tau$ with $\bar{\tau}$,  it can be checked that all proofs extend to the interval $1/2<\tau<1-\tau_*$ for the shape theorem and the interval $1/2<\tau<1-\tau^*$ for the size theorem.

\section{Conclusions and Future Directions} \label{Sec:Conclusion}
We provide two key theorems for a spin system located on a flat torus or on  $\mathbb{Z}^2$. While most of the previous theoretical developments for this process focus on its final or limiting configuration, our shape theorem provides a first-order characterization of  the geometry of ``affected nodes'' at any time during the evolution phase. Our second theorem provides the first result for the size of the largest monochromatic ball of any node in the final configuration, for a given interval of the intolerance parameter.
Along the way, we also  provided a  tight concentration bound for the spreading time of the affected nodes. We expect that the interval  of $\tau$ leading to exponential monochromatic balls can be further improved using the techniques developed in this paper,  using more complex geometric constructions. Also, we only discuss the existence of an $l_*$ norm for the shape theorem while simulations suggests an Euclidean norm and this remains to be proven.
Another direction of further study could be the investigation of how the parameter of the initial distribution of the agents influences the formation of monochromatic balls,  since it is only known that  a single monochromatic ball occurs w.h.p.  for $\tau=1/2$ and $p \in (1-\epsilon,1)$, while our results are  limited to  the case $p=1/2$. Finally, we point out that for $\tau = 1/2$ the behavior of the model is unknown.

\section*{Acknowledgment}
The authors would like to thank Prof. Jason Schweinsberg (Math Department, UC   San Diego) for providing invaluable feedback and detailed suggestions on earlier drafts of the paper.

\bibliography{Refs}

\end{document}